\documentclass[11pt]{amsart}
\usepackage{amsmath,amssymb}
\usepackage{amsthm}
\usepackage{amsfonts}
\usepackage{mathrsfs}
\usepackage{needspace}
\usepackage{enumitem}%
\usepackage[margin=1in]{geometry}
\usepackage[english]{babel}
\usepackage[colorlinks, linkcolor = blue, anchorcolor = blue, citecolor = blue]{hyperref}

\numberwithin{equation}{section}

\newtheorem{mainthm}{Theorem}

\newtheorem{theorem}{Theorem}[section]
\newtheorem{definition}{Definition}[section]
\newtheorem{proposition}[theorem]{Proposition}
\newtheorem{corollary}[theorem]{Corollary}
\newtheorem{lemma}[theorem]{Lemma}
\newtheorem{remark}{Remark}[section]

\newtheorem{example}[theorem]{Example}

\newcommand{\R}{\mathbb R}
\newcommand{\Sym}{\operatorname{Sym}}
\newcommand{\tr}{\operatorname{tr}}
\newcommand{\Id}{\operatorname{Id}}
\newcommand{\Int}{\operatorname{Int}}

\newcommand{\ip}[2]{\langle #1,#2\rangle}
\newcommand{\op}{\mathrm{op}}
\newcommand{\cC}{\mathcal C}
\newcommand{\cF}{\mathcal F}
\newcommand{\Q}{\mathcal Q}
\newcommand{\bH}{\vec H}

\makeatletter
\@namedef{subjclassname@2020}{\textup{2020} Mathematics Subject Classification}
\makeatother

\begin{document}

\title[Tensorial pinching for mean curvature flow]{Tensorial pinching by deforming curvature cones in higher codimension mean curvature flow}
\author{Tsz-Kiu Aaron Chow}
\address{Department of Mathematics, Hong Kong University of Science and Technology, Hong Kong S.A.R., China}
\email{\href{chowtka@ust.hk}{chowtka@ust.hk}}
\thanks{T.-K. A. C. is supported  by the Croucher Foundation Start-up Grant and the HKUST New Faculty Start-up Grant.}
\subjclass[2020]{Primary 53E10; Secondary 53C40, 35B44}
\keywords{Mean curvature flow, higher codimension, tensorial pinching, curvature cones, codimension reduction, round extinction, canonical neighborhoods}

\begin{abstract}
We develop a pinching method for mean curvature flow in higher codimension. Motivated by pinching constructions for Ricci flow, the method uses a nonlinear deformation of the second fundamental form. We introduce two tensorial curvature conditions for closed immersions in Euclidean space which extend beyond the scope of classical scalar pinching results. In dimensions at least ten, we apply the method to prove convergence to a round point under the first condition and a canonical neighborhood theorem under the second.
\end{abstract}

\maketitle
\tableofcontents

\section{Introduction}

A basic question in mean curvature flow is how preserved curvature conditions restrict the geometry of singularities. In his seminal work on mean curvature flow, Huisken \cite{HuiskenConvex} proved that every closed strictly convex hypersurface contracts to a round point. For hypersurfaces, curvature conditions such as mean-convexity, convexity, and two-convexity place strong restrictions on possible singularity models \cite{HuiskenConvex,HuiskenSinestrariConvexity,HuiskenSinestrariSurgery}. In higher codimension, however, the second fundamental form has a more complicated algebraic structure, and relatively few curvature conditions are known to be preserved by the flow.

Andrews and Baker \cite{AB} extended Huisken's theorem to higher codimension under quadratic pinching. For $n\ge4$, their theorem assumes $H>0$ and \mbox{$|A|^2\le\frac1{n-1}H^2$}. This coefficient is sharp among scalar pinching conditions. Indeed, products $S^{n-1}(r)\times S^1(R)$ with $r/R$ small have $|A|^2/H^2$ arbitrarily close to $1/(n-1)$, but collapse onto a circle.

The broader scalar condition \mbox{$|A|^2\le\frac4{3n}H^2$} is also preserved by mean curvature flow \cite{AB}, but allows singularities other than round points. Under strict scalar pinching, Naff \cite{NaffPlanarity} proved a planarity estimate showing that the component of $A$ orthogonal to $\bH$ becomes negligible relative to $H$ at high curvature. Lynch \cite{LynchPlanarity} extended this estimate to the full strict Andrews--Baker range in the remaining dimensions five and six. Under suitable scalar pinching assumptions, convexity estimates were obtained by Lynch and Nguyen \cite{LynchNguyenConvexity}, and noncollapsing results by Naff \cite{NaffModels} and Bourni, Langford, and Lynch \cite{BourniLangfordLynch}. Naff \cite{NaffCanonical} established a canonical neighborhood theorem, and Lynch and Nguyen \cite{LynchNguyenSurgery} constructed mean curvature flow with surgery.

For ancient solutions, Lynch and Nguyen \cite{LynchNguyenAncient} proved rigidity under uniform quadratic pinching in the compact case. Lee, Naff, and Zhu \cite{LeeNaffZhu} proved a scale-invariant planarity estimate and obtained convexity and rigidity results for pinched ancient solutions. The higher-codimension planarity and surgery results above require scalar pinching at least as strong as \mbox{$|A|^2\le\frac4{3n}H^2$}.

Other approaches to pinching and codimension reduction appear in the work of Baker and Nguyen \cite{BakerNguyenNormal}, Colding and Minicozzi \cite{ColdingMinicozziComplexity}, and Langford \cite{LangfordPinching}. For related results in curved ambient spaces, see \cite{LiuXuYeZhaoHyperbolic,BakerNguyenSphere,PipoliSinestrari,LiuXuZhaoRiemannian,NguyenVogiatzi,VogiatziCP,LangfordLynchNguyenSphere}.

In this paper, we consider closed immersions $F_0:M^n\to\R^{n+q}$. We seek curvature conditions which allow $|A|^2>\frac4{3n}H^2$ and still imply round extinction or canonical neighborhoods. The following examples, whose calculations are given in Appendix~\ref{app:examples}, illustrate the limitations of scalar pinching.

\begin{example}\label{ex:intro-sphere}
Consider the Veronese embedding $J:\mathbb{RP}^{n+1}\to\Sym_0(\R^{n+2})$ given by $J([y])=yy^T-\Id/(n+2)$ for $|y|=1$, where $\Sym_0$ denotes the traceless symmetric matrices. We identify this space with $\R^N$, with Euclidean norm $|U|^2=\tr(U^TU)$. We consider the restriction of $J$ to a geodesic sphere of radius $r$ in projective space of sectional curvature one. The linear isometry from $\R^{n+1}\oplus\Sym_0(\R^{n+1})$ to $\Sym_0(\R^{n+2})$ is given by
\[
 (v,U)\longmapsto\begin{pmatrix}0&v^T/\sqrt2\\v/\sqrt2&U\end{pmatrix}.
\]
Using this, we identify $\R^{n+1}\oplus\Sym_0(\R^{n+1})$ with $\R^N$ using the product Euclidean norm and write the parametrization as
\[
 F_0(x)=\left(\sqrt2\cos r\sin r\,x,\sin^2r\left(xx^T-\frac{\Id}{n+1}\right)\right),\qquad x\in S^n.
\]
Choosing $\tan^2r=1/(2n)$, we obtain, for $n\ge10$,
\[
 \frac{|A|^2}{H^2}=\frac{3(n+1)}{2(n^2+n+1)}>\frac4{3n}>\frac1{n-1}.
\]
Nevertheless, its mean curvature flow contracts to a round point with multiplicity one. The evolution can be computed explicitly, and its second fundamental form satisfies the strict cone condition $\cC_1$ defined below.
\end{example}

\begin{example}\label{ex:intro-tube}
Apply the same Veronese construction to a geodesic $9$-sphere in $\mathbb{RP}^{10}$ with $\tan^2r=1/35$, dilate its image by $3\sqrt2$, and take its product with the circle $S^1(2)$. After a translation, we identify $\R^{10}\oplus\Sym_0(\R^{10})\oplus\R^2$ with $\R^N$ using the product Euclidean norm and write the parametrization as
\[
 F_0(x,z)=\left(\frac{\sqrt{35}}6x,\frac1{6\sqrt2}\left(xx^T-\frac1{10}\Id\right),2z\right),\qquad (x,z)\in S^9\times S^1.
\]
Its second fundamental form satisfies the strict cone condition $\cC_2$ defined below, whereas
\[
 \frac{|A|^2}{H^2}=\frac17>\frac4{3\cdot10}>\frac1{10-2}.
\]
Its mean curvature flow can also be computed explicitly. At the first singular time, the sphere factor contracts to a point while the circle has positive limiting radius. The tangent flows at every point of the limiting circle are shrinking round cylinders $S^9\times\R$, with multiplicity one.
\end{example}

These examples lead us to ask whether one can find preserved conditions on the full second fundamental form which include a larger class of immersions beyond scalar quadratic pinching and still imply round extinction or canonical neighborhoods. We investigate this direction by introducing two tensorial conditions on the second fundamental form.

Let $A_\alpha$ be the shape operators in an orthonormal normal frame, and put
\[
 A_\alpha^\circ=A_\alpha-\frac{\tr A_\alpha}{n}\Id,\qquad |A^\circ|^2=\sum_\alpha|A_\alpha^\circ|^2.
\]
We define two curvature cones
\begin{equation}\label{eq:cone1}
 \cC_1=\left\{A\in\Sym^2(\R^n)\otimes\R^q\;\middle|\;|A^\circ|^2\Id_{\R^n}+n\sum_\alpha(A_\alpha^\circ)^2\le\frac1{n-1}H^2\Id_{\R^n}\right\},
\end{equation}
\begin{equation}\label{eq:cone2}
 \cC_2=\left\{A\in\Sym^2(\R^n)\otimes\R^q\;\middle|\;
 \begin{aligned}
 &\frac{2(2n-1)}{n-1}|A^\circ|^2\Id_{\Lambda^2\R^n}+(n-4)\sum_\alpha((A_\alpha^\circ)^2)^{[2]}\\
 &\quad+2\sum_\alpha((A_\alpha^\circ)^{[2]})^2\le\frac{2(n+1)}{(n-1)(n-2)}H^2\Id_{\Lambda^2\R^n}
 \end{aligned}\right\}.
\end{equation}
Here, for an endomorphism $U$ of $\R^n$, we define $U^{[2]}$ on $\Lambda^2\R^n$ by $U^{[2]}(v\wedge w)=Uv\wedge w+v\wedge Uw$. These cones are closed and invariant under the natural $O(n)\times O(q)$ action, and
\[
 \left\{|A|^2\le\frac1{n-1}H^2\right\}\varsubsetneq\cC_1,\qquad \left\{|A|^2\le\frac1{n-2}H^2\right\}\varsubsetneq\cC_2.
\]
The first inclusion follows from the inequality $\sum_\alpha(A_\alpha^\circ)^2\le\frac{n-1}{n}|A^\circ|^2\Id_{\R^n}$, whereas the second follows from the inequality $(n-4)\sum_\alpha((A_\alpha^\circ)^2)^{[2]}+2\sum_\alpha((A_\alpha^\circ)^{[2]})^2\le(n-2)|A^\circ|^2\Id_{\Lambda^2\R^n}$.

Examples~\ref{ex:intro-sphere} and~\ref{ex:intro-tube} lie in the interiors of the respective cones and outside the scalar pinching regions (see Appendix~\ref{app:examples}). The tensors of $S^{n-1}\times\R$ and $S^{n-2}\times\R^2$ lie on the boundaries of $\cC_1$ and $\cC_2$, respectively. Thus these conditions retain the relevant cylinders while allowing round extinction and cylindrical singularities beyond the scalar bound $4/(3n)$. To that end, we prove

\begin{mainthm}\label{thm:mainA}
Let $M^n$ be closed and connected, with $n\ge10$, and let $F:M^n\times[0,T)\to\R^{n+q}$ be a maximal smooth solution of mean curvature flow of immersions such that $A(p,0)\in\cC_1$ for every $p\in M$. Then
\begin{enumerate}[label=(\roman*)]
\item $A(p,t)\in\cC_1$ for all $(p,t)\in M\times[0,T)$.
\item $A(p,t)\in\Int\cC_1$ and $H(p,t)>0$ for all $(p,t)\in M\times(0,T)$.
\item There exists $x_*\in\R^{n+q}$ such that the rescaled flow $(T-t)^{-1/2}(F(\cdot,t)-x_*)$ converges smoothly, after reparametrization, to a round sphere as $t\uparrow T$. In particular, $M$ is diffeomorphic to $S^n$.
\end{enumerate}
\end{mainthm}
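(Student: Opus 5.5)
We follow the Ricci-flow pinching-family strategy (Böhm--Wilking, Brendle--Schoen), adapted to the reaction terms of mean curvature flow in higher codimension. Part (i) is a tensor maximum principle. In an orthonormal frame the second fundamental form satisfies
\[
 \nabla_t A=\Delta A+Q(A),\qquad Q(A)_{ij\alpha}=\sum_{\beta,p,q}A_{ij\beta}A_{pq\beta}A_{pq\alpha}+\sum_{\beta,p}\bigl(A_{ip\beta}A_{jp\beta}A_{pp\alpha}\text{-type terms}\bigr),
\]
and by Hamilton's maximum principle for vector bundles (in the form adapted to the normal connection, as in Andrews--Baker), it suffices to show that $\cC_1$ is an $O(n)\times O(q)$-invariant closed convex set preserved by the ODE $\frac{d}{dt}A=Q(A)$.

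Convexity of $\cC_1$ needs its own argument, since it is a sublevel set of a tensor-valued quadratic. Write the condition as
\[
 \tfrac1{n-1}H^2\,\xi^2\ \ge\ |A^\circ|^2+n\sum_\alpha|A_\alpha^\circ\xi|^2\quad\text{for every unit }\xi.
\]
On the region $H>0$ this says a norm of $A^\circ$ is bounded by a linear function of $A$, which gives convexity. We then verify ODE invariance at a boundary point: choose $\xi$ achieving equality and check that the derivative of the defining quantity, computed along $Q$, has the correct sign. This is a finite-dimensional algebraic inequality. We would reduce it using the invariance to a normal frame with $\bH$ along $e_1$, splitting $A=\frac{H}{n}\Id\otimes e_1+A^\circ$ and estimating the trace-free components. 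The restriction $n\ge10$ should enter here.

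For (ii) we use the strong maximum principle of Hamilton/Brendle type. If $A$ touches $\partial\cC_1$ at a positive time, the null direction is parallel and the reaction term vanishes there. The algebraic equality case then forces $A$ to be locally that of $S^{n-1}\times\R$, and closedness of $M$ excludes this. Positivity of $H$ follows because $\cC_1\subset\{H\ge0\}$, with $H=0$ forcing $A=0$, which is also impossible.

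Part (iii) is the main work and the main obstacle. We construct a continuous family of closed convex invariant cones $\cC(b)$, $b\in[0,b_{\max})$, obtained by a nonlinear deformation $\ell_{a,b}(A)=A+b\,(\text{quadratic correction in }A^\circ)+a\,H\,(\dots)$, in analogy with Böhm--Wilking's $\ell_{a,b}$. Each cone is defined by pulling $\cC_1$ back under $\ell_{a,b}$. We must verify that $Q$ points inward on each $\partial\cC(b)$, and that the family interpolates from $\cC_1$ to a cone contained in $\{|A|^2\le(\frac1n+\varepsilon)H^2\}$. By compactness and (ii), $A(\cdot,t_0)$ lies in some $\cC(b_0)$ at a positive time $t_0$. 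We then run the standard pinching-set argument (Böhm--Wilking / Brendle generalized pinching set), which requires a Stampacchia-type $L^p$ estimate of Huisken. This argument pushes the flow into the strict scalar region $|A|^2\le\frac1{n-1}H^2$ after finite time, or directly gives $|A^\circ|^2\le C H^{2-\delta}$. At that point Andrews--Baker applies: the rescaled flow converges smoothly to a round sphere, and hence $M\cong S^n$.

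The hardest computation will be checking that the reaction term points inward along the deformed family. This reduces to bounding the cubic terms $\sum_{\alpha,\beta}|[A_\alpha,A_\beta]|^2$ and $\sum\tr(A_\alpha^\circ A_\beta^\circ A_\gamma^\circ)$-type terms against $|A^\circ|^2H^2$ on the boundary. We would attack it with the sharp commutator estimate of Li--Li and by tuning $a(b)$ so that the inequality holds for $n\ge10$.
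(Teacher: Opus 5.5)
\textbf{Parts (i) and (ii).} Your argument rests on $\cC_1$ being convex, and this fails as soon as $q\ge2$. The quantity $H=|\tr A|$ is a norm of the trace, not a linear function of $A$, so ``a norm of $A^\circ$ bounded by $H$'' is not a convex condition. Concretely, pick a traceless $X$ with $\max_{|\xi|=1}(|X|^2+n|X\xi|^2)=\frac1{n-1}$. Then $\frac1n\Id\otimes\nu_1+X\otimes\nu_1$ and $\frac1n\Id\otimes\nu_2+X\otimes\nu_1$ both lie in $\cC_1$ with $H=1$. Their midpoint has the same traceless part but $H^2=\frac12$, so it lies outside $\cC_1$.

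Consequently, neither Hamilton's vector-bundle maximum principle nor a Brendle--Schoen strong maximum principle for convex cones is available, and ODE invariance is not enough. Your reaction-term check at a minimizing $\xi$ matches what the paper does, but it is only half of (i). With $\Phi_u=\frac1{n-1}H^2-|A^\circ|^2-n\sum_\alpha|A^\circ_\alpha u|^2$, the gradient contribution $-\sum_jD^2\Phi_u(A)[\nabla_jA,\nabla_jA]$ contains $-\frac2{n-1}|\nabla\bH|^2$, which has the wrong sign. The missing ingredient is the Codazzi bound $|\tr Z|^2\le\frac{n+2}3|Z|^2$ for $Z\in\Sym^3(V)\otimes N$. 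It gives $-\sum_jD^2\Phi_u[Z_j,Z_j]\ge\chi_n|Z|^2$ (Lemma~\ref{lem:codazzi}), and this is combined with a maximum principle for the family $\{\Phi_u\}$ that requires the reaction inequality wherever $\min_u\Phi_u\le0$ (Proposition~\ref{prop:maximum}).

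The same strictly positive gradient term is what makes (ii) work: it forces $\nabla A=0$, not merely a parallel null direction. The equality case of the reaction inequality only gives $\widehat A=0$. The hypersurface slice of $\partial\cC_1$ contains many tensors besides that of $S^{n-1}\times\R$, so ``equality forces the cylinder'' is unjustified on its own. The paper needs three further steps: $\nabla A=0$; the straight-line argument, which rules out $A(v,v)=0$ on a closed immersion; and the Gauss identity $\lambda_j\lambda_k(\lambda_j-\lambda_k)=0$, which then forces umbilicity and a contradiction.

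\textbf{Part (iii).} This plan cannot work as stated. On hypersurface-type tensors the reaction term is $\Q(A)=|A|^2A$, which is radial and preserves $A/H$. At any hypersurface-type boundary point of a cone it is therefore tangent to the boundary and never strictly inward. Any family shrinking from $\cC_1$ toward $\{|A|^2\le(\frac1n+\varepsilon)H^2\}$ has such points on its boundary, for instance where the segment from the umbilic tensor to the cylinder tensor exits. So no B\"ohm--Wilking/Brendle pinching family driven by the reaction ODE can force umbilicity; the ODE can only improve $\widehat A$.

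This is why the paper deforms by $\Theta_s(A)=\widehat A+s\frac{A^{\bH}}{H^2}\otimes\bH$, stretching only the $\bH$-component. Its pinching set (Proposition~\ref{prop:pinching-construction}) approaches $\{A\in\cC_1(\eta):H>0,\ \widehat A=0\}$, which gives codimension reduction $|\widehat A|/H\to0$, not roundness. Roundness then comes from a separate blow-up argument:
\begin{enumerate}
\item Nonflat limits lie in an $(n+1)$-plane and are uniformly convex, with $\kappa_i\ge\frac{1-\sqrt{1-\eta}}nH$.
\item Hamilton's pinched-hypersurface theorem makes them compact, and Huisken--Sinestrari identifies them as shrinking spheres.
\item Huisken's monotonicity formula and Schulze's uniqueness of compact tangent flows then give convergence of the original flow.
\end{enumerate}

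Your fallback to Stampacchia iteration comes with no scalar quantity to iterate on. Reaching the Andrews--Baker region would itself require a hypersurface pinching estimate toward umbilicity, which is the whole difficulty. Even exactly planar tensors in $\cC_1$ can have $|A|^2/H^2$ as large as $\frac1n+\frac1{2(n-1)}>\frac1{n-1}$; for even $n$, take principal curvatures $\frac Hn\pm\frac H{\sqrt{2n(n-1)}}$.
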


For the second theorem, an \emph{ancient model solution} is understood in the sense of \cite[Definition~2.2]{NaffCanonical}. We specify the class of models and the meaning of closeness in Section~\ref{sec:canonical}.

\begin{mainthm}\label{thm:mainB}
Let $M^n$ be closed and connected, with $n\ge10$, and let $F:M^n\times[0,T)\to\R^{n+q}$ be a maximal smooth solution of mean curvature flow of immersions such that $A(p,0)\in\cC_2$ for every $p\in M$. Write $F_0=F(\cdot,0)$. Then
\begin{enumerate}[label=(\roman*)]
\item $A(p,t)\in\cC_2$ for all $(p,t)\in M\times[0,T)$.
\item $A(p,t)\in\Int\cC_2$ and $H(p,t)>0$ for all $(p,t)\in M\times(0,T)$.
\item There exist constants $\widetilde\varepsilon>0$ and $\widetilde K>0$, depending on $F_0$, with the following property. For every $\varepsilon_0\in(0,\widetilde\varepsilon)$ and $K_0>\widetilde K$, there is a number $\widehat r=\widehat r(F_0,\varepsilon_0,K_0)>0$ such that, if $(p_0,t_0)\in M\times[0,T)$ satisfies $Q_0:=H(p_0,t_0)\ge\widehat r^{-1}$, then the flow is $\varepsilon_0$-close in the parabolic neighborhood $B_{g(t_0)}(p_0,Q_0^{-1}K_0)\times[t_0-K_0Q_0^{-2},t_0]$ to an ancient model solution, in the sense of Definition~\ref{def:comparison}.
\end{enumerate}
\end{mainthm}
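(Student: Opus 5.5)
Part (i) comes from Hamilton's maximum principle for tensors. The second fundamental form evolves by
\[
\nabla_t A=\Delta A+\mathcal Q(A),
\]
where $\mathcal Q$ is the cubic reaction term of Andrews--Baker, written in an orthonormal normal frame as
\[
\mathcal Q(A)_{ij\alpha}=\sum_{p,\beta}A_{ip\beta}A_{jp\beta}\ldots
\]
Since $\cC_2$ is closed, convex in $A$ (checked below) and $O(n)\times O(q)$-invariant, the vector-bundle maximum principle reduces preservation to one algebraic fact: for $A\in\partial\cC_2$, $\mathcal Q(A)$ points into the tangent cone of $\cC_2$ at $A$. Concretely, at a unit $\omega\in\Lambda^2\R^n$ attaining equality in \eqref{eq:cone2}, I would show that the derivative of the defining quadratic expression in direction $\mathcal Q(A)$, paired with $\omega$, is $\le 0$.

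Convexity holds because $\cC_2$ is the set where, for each unit $\omega$, the square root of the quadratic form
\[
\frac{2(2n-1)}{n-1}|A^\circ|^2+\langle\ldots\omega,\omega\rangle
\]
is bounded by a multiple of $H$. That quadratic form is positive semidefinite in $A^\circ$, since each term is a sum of squares, so its square root is a seminorm. Hence $\cC_2$ is an intersection of second-order cones.

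The null-vector computation is the heart of the matter. I would diagonalize $\omega$ into $2$-planes and, rather than fight it directly, follow the Ricci-flow pinching philosophy. Introduce a one-parameter family of cones $\cC_2(s)$, obtained by a nonlinear deformation of $A$: a map $\ell_{a,b}$ adding multiples of $H$-weighted traceless terms, in the spirit of the Böhm--Wilking map. The family interpolates from $\{|A|^2\le \frac{1}{n-2}H^2\}$-type cones, known preserved by Andrews--Baker, to $\cC_2$. One then shows that $\ell^{-1}\circ\mathcal Q\circ\ell$ points inward along the boundary, which should reduce to checking sign conditions on a finite list of quadratic invariants. The main obstacle is exactly this algebra. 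The lower bound $n\ge10$ enters when one needs the coefficient $(n-4)$ to dominate mixed cross-terms $\sum_{\alpha\neq\beta}|[A^\circ_\alpha,A^\circ_\beta]|^2$ and the terms coupling $A^\circ$ to $H$. I would first handle the planar case $q=1$, then control the nonplanar directions using the commutator terms, which enter $\mathcal Q$ with a favorable sign.

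For (ii), $H>0$ follows since $\cC_2\setminus\{0\}$ forces $H>0$; equality with $H=0$ would force $A=0$. The strong maximum principle then gives: if $A$ touches $\partial\cC_2$ at some positive time, the null structure propagates by parallel transport. This forces $\mathcal Q(A)$ to be tangent to $\partial\cC_2$, which the strict version of the algebraic inequality rules out except on the $S^{n-2}\times\R^2$ configuration, and that configuration is incompatible with compactness. Hence $A\in\Int\cC_2$.

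For (iii), compactness gives a uniform $\varepsilon$ with $A$ in an $\varepsilon$-shrunk cone at $t=t_1>0$. Using the deforming family and a Huisken-style Stampacchia iteration, I would derive a pinching estimate: $A$ approaches $\cC_2(s)$ for $s$ near the cylindrical end at large curvature. This yields quadratic pinching $|A|^2\le(\frac{1}{n-2}+\eta)H^2+C_\eta$ and planarity $|A^-|\le\eta H+C_\eta$. At that point Naff's framework applies: the noncollapsing and cylindrical estimates of Naff and of Bourni--Langford--Lynch, followed by the contradiction/compactness argument of \cite{NaffCanonical}. Blow-up sequences violating closeness converge to ancient solutions in the model class of Definition~\ref{def:comparison}, which gives the contradiction.
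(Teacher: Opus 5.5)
Your proposal has genuine gaps in all three parts, beginning with (i). The cone $\cC_2$ is not convex. It is invariant under $-\Id_N\in O(N)$, so $\cC_2=-\cC_2$, and a convex cone with this property would be a linear subspace. For $q\ge2$, not even $\cC_2\cap\{H>0\}$ is convex: two tensors with the same traceless part $A^\circ\ne0$ and mean curvature vectors $c\nu_1$ and $-c\nu_1+\epsilon\nu_2$ ($c$ large) both lie in it, while their midpoint has the same $A^\circ$ and $H=\epsilon/2$. Your second-order-cone argument breaks because $H=|\tr A|$ is a convex, not a linear, function of $A$.

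Hamilton's vector-bundle maximum principle therefore does not reduce (i) to a condition on $\Q$. The term $-\sum_jD^2\Phi_u(A)[\nabla_jA,\nabla_jA]$ has no sign a priori, because $D^2\Phi_u$ is positive in the trace directions. The paper controls it through the Codazzi symmetry $|\tr Z|^2\le\frac{n+2}{3}|Z|^2$, which gives $-\sum_jD^2\Phi_u^{2,\eta}(A)[Z_j,Z_j]\ge\chi_n|Z|^2$ (Lemma~\ref{lem:codazzi}), and feeds this into the maximum principle for the family $\{\Phi_u\}$ (Proposition~\ref{prop:maximum}). The reaction inequality itself is proved by direct algebra (Lemmas~\ref{lem:second-mixed}--\ref{lem:second-directional}, Proposition~\ref{prop:reaction-second}). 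An interpolating family of cones would not spare you this, since each intermediate cone needs the same verification.

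The same misconception undermines (ii). If $\widehat A=0$, then $\Q(A)=|A|^2A$, so $D\Phi_u(A)[\Q(A)]=2|A|^2\Phi_u(A)=0$ at every codimension-one tensor on $\partial\cC_2$. No strict algebraic inequality can therefore rule out touching, whether or not the tensor is of $S^{n-2}\times\R^2$ type. In Proposition~\ref{prop:strict} the mechanism is different. The strict gradient bound $\chi_n|\nabla A|^2$ forces $\nabla A=0$ once the boundary is touched. Compactness then excludes $A(v,v)=0$, since a geodesic would map onto a straight line. The equality case then gives $\widehat A=0$, and the Gauss equation forces umbilicity, contradicting $\min_u\Phi_u^{2,0}(A)=0$.

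In (iii), your margin at a positive time matches Corollary~\ref{cor:margin}, but the next steps do not work as stated. The reaction ODE is neutral on the hypersurface part: $\frac{dA}{dt}=|A|^2A$ gives $\frac{d}{dt}(A/H)=0$. A deformation of cones can therefore only gain on the component $\widehat A$ orthogonal to $\bH$, which is why the paper's nonlinear map $\Theta_s$ rescales only the $\bH$-component. The pinching sets of Proposition~\ref{prop:pinching-construction} are built by the maximum principle through Lemma~\ref{lem:improvement}, not by Stampacchia iteration, for which no suitable scalar quantity is available under $\cC_2$. They yield planarity, $|\widehat A|/H\to0$, and hence hypersurface blow-up limits (Proposition~\ref{prop:blowup}), but no improvement of $|A|^2/H^2$.

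Your target $|A|^2\le(\frac1{n-2}+\eta)H^2+C_\eta$ would not suffice even if proved. Its limits satisfy only $|A_\infty|^2\le\frac1{n-2}H_\infty^2$, which allows $S^{n-2}\times\R^2$ and negative principal curvatures, so they need not be uniformly two-convex or weakly convex. The estimates of Naff and Bourni--Langford--Lynch that you invoke are proved under a global scalar pinching at least as strong as $|A|^2\le\frac4{3n}H^2$. That fails here: Example~\ref{ex:intro-tube} has $|A|^2/H^2=1/7$.

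The missing step is the cylindrical estimate $\limsup_{H\to\infty}|A|^2/H^2\le\frac1{n-1}$ (Lemma~\ref{lem:cylindrical-limit}). Blowing up at points realizing the $\limsup$ gives a hypersurface flow attaining its maximal ratio. Lemma~\ref{lem:ratio} then makes it parallel with equal nonzero principal curvatures, and the margin $\eta>0$ in $\cC_2(\eta)$ excludes two vanishing principal curvatures. This yields weak convexity and uniform two-convexity of all limits, and noncollapsing of the complete ancient ones. Only then can Naff's point-picking argument be run, using the derivative estimates of Proposition~\ref{prop:derivatives}.
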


The classical pinching estimates for mean curvature flow in higher codimension do not apply directly under \eqref{eq:cone1} and \eqref{eq:cone2}, since these cone conditions extend beyond the scalar pinching range. For these tensorial conditions, it is also not clear how to construct a scalar quantity satisfying a differential inequality from which the required pinching estimates follow by Stampacchia iteration or the maximum principle. We instead deform the curvature cones directly, drawing on the pinching constructions for Ricci flow.

B\"ohm and Wilking \cite{BW} introduced linear maps $\ell_{a,b}$ which change the scalar and traceless Ricci components of an algebraic curvature tensor while leaving its Weyl component unchanged. For suitable parameters, these maps produce families of preserved curvature cones and pinching sets. This construction underlies their convergence theorem, the sphere theorem of Brendle and Schoen \cite{BrendleSchoen}, and Brendle's convergence theorem \cite{BrendleConvergence,BrendleBook}. In these results, the pinching sets force normalized curvature tensors to approach constant positive curvature as the scalar curvature tends to infinity.

Pinching constructions can also retain cylindrical singularities, as in the work of Brendle \cite{BrendlePIC} and Brendle and Tsiamis \cite{BrendleTsiamis} on Ricci flow with positive isotropic curvature. These results suggest choosing the limiting curvature condition according to the singularities one expects to retain.

There is an essential difference between mean curvature flow and Ricci flow. For a hypersurface with $H>0$, the Hamilton-type reaction ODE associated with the evolution equation is $\frac{dA}{dt}=|A|^2A$, and hence $\frac d{dt}(A/H)=0$. Thus the reaction ODE does not improve the ratios of the principal curvatures to $H$. We therefore seek a deformation which makes the components orthogonal to $\bH$ small relative to $H$, while retaining the hypersurface part. As in \cite{NaffPlanarity}, for $H>0$ we write
\[
 A^{\bH}=\langle A,\bH\rangle_N,\qquad A=\frac{A^{\bH}}{H^2}\otimes\bH+\widehat A,
\]
and define
\[
 \Theta_s(A)=\widehat A+s\frac{A^{\bH}}{H^2}\otimes\bH,\qquad s\ge1.
\]
This map multiplies the component in the mean-curvature direction by $s$ and leaves $\widehat A$ unchanged. Since $\tr\Theta_s(A)=s\bH$, it reduces the ratio $|\widehat A|/H$ by a factor of $s$. The map is nonlinear, since the normal direction $\bH/H$ depends on $A$.

In Section~3, we prove that both cones are preserved and that, at any fixed positive time, $H>0$ and the second fundamental forms lie in a smaller preserved cone $\cC$ with $\cC\setminus\{0\}\subset\Int\cC_i$. We consider the deformed cones
\[
 \cC(s)=\Theta_s(\cC\cap\{H>0\}),\qquad s\ge1.
\]
Their defining inequalities imply $s^2|\widehat A|^2\le2H^2/n$. We adapt the pinching-set construction in \cite[Section~8.3]{BrendleBook} to obtain a preserved set containing the second fundamental forms at the chosen positive time, on which, for every $s\ge1$, there is a constant $h_s>0$ such that
\[
 A+\frac{h_s}{H}\Id\otimes\bH\in\cC(s).
\]
The tensor on the left has the same component $\widehat A$ and mean-curvature norm $H+nh_s$. Hence
\[
 \frac{|\widehat A|}{H}\le\frac1s\sqrt{\frac2n}\left(1+\frac{nh_s}{H}\right).
\]
For fixed $s$, the term $nh_s/H$ tends to zero as $H\to\infty$. Since $s$ can be arbitrarily large, $|\widehat A|/H$ tends to zero uniformly as $H\to\infty$. The construction is carried out in Sections~\ref{sec:deformation} and~\ref{sec:pinching}. Together with derivative estimates, this pinching implies that every connected nonflat smooth blow-up limit lies in a fixed affine $(n+1)$-plane.

The remaining arguments use the geometry of the initial cones. For $\cC_1$, the positive-time cone condition makes the limiting hypersurfaces uniformly convex. Hamilton's theorem for complete hypersurfaces with pinched second fundamental form \cite{HamiltonPinched} and the rigidity theorem for ancient convex solutions \cite{HuiskenSinestrariAncient} then give shrinking spheres. The maximum principle, Huisken's monotonicity formula, and uniqueness of compact tangent flows \cite{Schulze} yield the convergence in Theorem~\ref{thm:mainA}(iii).

For $\cC_2$, the reduction to hypersurfaces allows us to apply the maximum principle to $|A|^2/H^2$ and obtain the cylindrical estimate $\limsup_{H\to\infty}|A|^2/H^2\le1/(n-1)$. Consequently, the limiting hypersurfaces are weakly convex and uniformly two-convex. The derivative estimates and Naff's arguments for complete ancient limits \cite{NaffModels,NaffCanonical} then give the canonical neighborhoods in Theorem~\ref{thm:mainB}(iii).

\textit{Disclosure of AI use.} The curvature cones were constructed by the author. The ideas underlying the deformation in Section~4 and the pinching argument in Section~5 are due to the author and are motivated by pinching constructions for Ricci flow. AI tools (ChatGPT 6) were used to assist with algebraic calculations in Section~3, revising the exposition, literature searches, and exploration of other related ideas. The author takes full responsibility for the mathematical content and the final manuscript.

\section{Maximum principles and pinching sets}

In this section, we prove a maximum principle for curvature conditions defined by families of functions on $\Sym^2(V)\otimes N$. We then define pinching sets and give conditions under which nonflat blow-up limits lie in a fixed affine $(n+1)$-plane.

\subsection{Notation and evolution equations}
Let $V$ and $N$ be real vector spaces of dimensions $n$ and $q$, equipped with inner products, and identify $\Sym^2(V)$ with the self-adjoint endomorphisms of $V$. Fix an orthonormal basis $\nu_1,\ldots,\nu_q$ of $N$. For $A\in\Sym^2(V)\otimes N$ and $\xi\in N$, we denote $\ip A\xi_N(X,Y)=\ip{A(X,Y)}\xi$. Moreover, we write
\[
 A=\sum_{\alpha=1}^q A_\alpha\otimes\nu_\alpha,\qquad \bH=\tr A,\qquad H=|\bH|,\qquad A^{\bH}=\ip A{\bH}_N.
\]
We use the norm $|U|^2=\tr(U^TU)$ for endomorphisms of $V$ and define $\Q(A)\in\Sym^2(V)\otimes N$ by
\begin{equation}\label{eq:reaction}
 \Q(A)_\alpha=\sum_\beta\left( \tr(A_\alpha A_\beta)A_\beta +A_\beta^2A_\alpha+A_\alpha A_\beta^2 -2A_\beta A_\alpha A_\beta\right).
\end{equation}
The map $\Q$ commutes with the natural $O(V)\times O(N)$ action.

Let $F:M\times I\to\R^{n+q}$ be a smooth mean curvature flow, with induced metric $g$ and second fundamental form $A$. By \cite[(25)--(26) and Section~2.3]{AB}, we have
\begin{align}
 (\nabla_t-\Delta)A&=\Q(A),\label{eq:evolution}\\
 (\nabla_t^\perp-\Delta^\perp)\bH &=\sum_\alpha\tr(A_\alpha A^{\bH})\nu_\alpha,\label{eq:mean-evolution}
\end{align}
where the connections in time are defined by $\nabla_tX=\partial_tX-A^{\bH}X$ and $\nabla_t^\perp\xi=(\partial_t\xi)^\perp$.
The Codazzi equation gives $\nabla A\in\Sym^3(TM)\otimes NM$.

\subsection{Maximum principles}
Let $Y\subset\Sym^2(V)\otimes N$ be invariant under the natural $O(V)\times O(N)$ action. For a smooth mean curvature flow $F:M\times I\to\R^{n+q}$, choose at each point $(p,t)$ linear isometries from $V$ to $(T_pM,g)$ and from $N$ to the normal space $N_pM$ of $F(\cdot,t)$ at $p$. These induce a linear isometry from $\Sym^2(V)\otimes N$ to $\Sym^2(T_pM)\otimes N_pM$. Let $Y(p,t)$ be the image of $Y$. Since $Y$ is invariant under the natural $O(V)\times O(N)$ action, $Y(p,t)$ is independent of these choices. We write $A(p,t)\in Y$ to mean $A(p,t)\in Y(p,t)$.

Fix $1\le k\le n$, and let $W\subset\Lambda^kV$ be a nonempty compact set invariant under the natural $O(V)$ action, with $O(N)$ acting trivially. Let $\{\Phi_u\}_{u\in W}$ be a continuous family of smooth functions on an open neighborhood of $Y$ invariant under this action, such that
\[
 \Phi_{gu}(gA)=\Phi_u(A) \qquad\text{for all }g\in O(V)\times O(N).
\]
Define
\[
 \cC=\{A\in Y:\min_{u\in W}\Phi_u(A)\ge0\}.
\]
The equivariance of the functions $\Phi_u$ implies that $\cC$ is invariant under the natural $O(V)\times O(N)$ action, and we use the same convention for $A(p,t)\in\cC$.
\begin{proposition}\label{prop:maximum}
Suppose that the condition $A\in Y$ is preserved by mean curvature flow of closed immersions. Suppose also that there is a constant $c\ge0$ such that
\begin{equation}\label{eq:maximum-hypothesis}
 D\Phi_u(A)[\Q(A)]-\sum_jD^2\Phi_u(A)[Z_j,Z_j] \ge c|A|^2\Phi_u(A)
\end{equation}
whenever $A\in Y$, $\Phi_u(A)=\min_{v\in W}\Phi_v(A)\le0$, and $Z\in\Sym^3(V)\otimes N$, where $Z_j=Z(e_j,\cdot,\cdot)$. Then $\cC$ is preserved by mean curvature flow of closed immersions.
\end{proposition}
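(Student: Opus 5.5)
We prove the proposition by a Hamilton-type maximum principle argument in the spirit of the vector bundle maximum principle. We work with the function $\varphi(p,t)=\min_{u\in W(p,t)}\Phi_u(A(p,t))$, where $W(p,t)$ is the image of $W$ under the chosen isometry $V\to T_pM$; by equivariance this is well defined. To make the set strictly preserved, we perturb. Fix a closed flow with $A(\cdot,0)\in\cC$ and a compact time interval $[0,T']$ on which $|A|^2\le C_0$. For small $\varepsilon>0$ we show that $\varphi+\varepsilon e^{Kt}>0$ on $M\times[0,T']$ for a suitable large $K$, and then let $\varepsilon\to0$. Alternatively, and perhaps more cleanly, we show directly that $\varphi_-:=\max(-\varphi,0)$ satisfies $\frac{d}{dt}\max_M\varphi_-\le C\max_M\varphi_-$ in the barrier sense, and conclude by Gronwall. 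I will take the perturbation route.

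\textbf{Setup at a first bad point.} Suppose $(p_0,t_0)$ is the first time at which $\varphi+\varepsilon e^{Kt}=0$. Choose $u_0\in W(p_0,t_0)$ achieving the minimum, so $\Phi_{u_0}(A)=\varphi\le0$ there, and $A\in Y$ by the hypothesis that $Y$ is preserved. Extend $u_0$ to a local section $u$ of $\Lambda^kTM$ near $(p_0,t_0)$ that is parallel at $p_0$ in space, i.e.\ $\nabla u=0$ and $\Delta u=0$ at $p_0$, and satisfies $\nabla_tu=0$ in time. This is done with the connection $\nabla_t$ from \eqref{eq:evolution}, which is metric compatible, so $u$ stays in $W$ by $O(V)$-invariance of $W$. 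Then $f=\Phi_u(A)+\varepsilon e^{Kt}\ge0$ near $(p_0,t_0)$ for $t\le t_0$, with equality at the point. Hence $\partial_tf\le0$, $\nabla f=0$ and $\Delta f\ge0$ there.

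\textbf{Computing the evolution of $f$.} The chain rule with \eqref{eq:evolution} gives
\[
 (\partial_t-\Delta)\Phi_u(A)=D\Phi_u(A)[\Q(A)]-\sum_jD^2\Phi_u(A)[\nabla_jA,\nabla_jA]
\]
at $p_0$. Terms involving derivatives of $u$ vanish there by the choice of extension, since $\Phi$ depends jointly on $(u,A)$ and the $u$-derivatives are killed by $\nabla u=\Delta u=\nabla_tu=0$. The time derivative must be taken in frames transported by $\nabla_t$ and $\nabla_t^\perp$, which are isometric, so that $\Phi_u$ evaluated in moving frames is consistent with equivariance. By Codazzi, $Z=\nabla A\in\Sym^3\otimes N$, so the hypothesis \eqref{eq:maximum-hypothesis} yields
\[
 (\partial_t-\Delta)f\ge c|A|^2\Phi_u(A)+K\varepsilon e^{Kt}=\varepsilon e^{Kt}(K-c|A|^2)>0
\]
once $K>cC_0$. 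Here we used $\Phi_u(A)=-\varepsilon e^{Kt}$ at the point. This contradicts $\partial_tf\le0\le\Delta f$.

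\textbf{Main obstacle.} The hypothesis is only assumed when $\Phi_u(A)=\min_v\Phi_v\le0$ and $A\in Y$. Both conditions hold at the touching point, since $\Phi_{u_0}=-\varepsilon e^{Kt}<0$. The delicate points are elsewhere.
\begin{itemize}
\item The $\Phi_u$ are defined only on a neighborhood of $Y$. This is fine, because $A\in Y$ throughout.
\item We need existence of a first bad time and openness. The family is continuous and $W$ is compact, so $\varphi$ is continuous and a first time exists. At $t=0$ we have $\varphi\ge0$, so the perturbed quantity is positive initially.
\item We must justify that the minimum over $u$ does not obstruct the one-sided argument. This works because $f$ built from the fixed extended section dominates $\varphi+\varepsilon e^{Kt}$, and the two agree at the touching point.
\end{itemize}
I expect the careful construction of the parallel extension of $u$ in space-time, compatible with the time connection $\nabla_t$ and the normal connection so that equivariance yields exactly the stated formula, to be the main technical step.
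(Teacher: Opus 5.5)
Your proposal is correct and is essentially the paper's proof: perturb by $\varepsilon e^{Lt}$ with $L>c\sup_{M\times[0,T']}|A|^2$, take the first zero, and fix a minimizing $u$. Extend the frames by parallel transport in time and along radial geodesics, then contradict \eqref{eq:maximum-hypothesis} via the chain rule and Codazzi. One small refinement: the family $\Phi_u$ is only assumed continuous in $u$, so the cleanest justification is that $u$ is held literally constant in these parallel frames, so no $u$-derivatives ever arise; appealing to vanishing $u$-derivatives at the point tacitly assumes differentiability in $u$.
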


\begin{proof}
Let $F:M\times[0,T)\to\R^{n+q}$ be a smooth mean curvature flow of a closed immersion with $A(\cdot,0)\in\cC$. Fix $0<T'<T$ and $L>c\sup_{M\times[0,T']}|A|^2$. Since $A\in Y$ is preserved, the flow remains in $Y$. Fix $\epsilon>0$, and suppose that $\min_{v\in W}\Phi_v(A)+\epsilon e^{Lt}$ has its first zero at $(x_0,t_0)\in M\times(0,T']$. Then
\[
 \Phi_u(A(x_0,t_0))=\min_{v\in W}\Phi_v(A(x_0,t_0))=-\epsilon e^{Lt_0}<0
\]
for some $u\in W$.

Fix orthonormal tangent and normal frames at $(x_0,t_0)$. Extend them by parallel transport along the time line through $x_0$, and then along radial geodesics from $x_0$ at each time, keeping $u$ fixed in these frames. We have $\Phi_u(A)+\epsilon e^{Lt}\ge0$ near $(x_0,t_0)$ for $t\le t_0$, with equality at $(x_0,t_0)$. By \eqref{eq:evolution}, the Codazzi equation, and \eqref{eq:maximum-hypothesis}, we obtain at $(x_0,t_0)$
\begin{align*}
 0&\ge(\partial_t-\Delta)\bigl(\Phi_u(A)+\epsilon e^{Lt}\bigr)\\
 &=D\Phi_u(A)[\nabla_tA-\Delta A]-\sum_jD^2\Phi_u(A)[\nabla_jA,\nabla_jA]+L\epsilon e^{Lt_0}\\
 &=D\Phi_u(A)[\Q(A)]-\sum_jD^2\Phi_u(A)[\nabla_jA,\nabla_jA]+L\epsilon e^{Lt_0}\\
 &\ge c|A|^2\Phi_u(A)+L\epsilon e^{Lt_0}=(L-c|A|^2)\epsilon e^{Lt_0}>0,
\end{align*}
a contradiction. Thus $\min_{u\in W}\Phi_u(A)+\epsilon e^{Lt}>0$ on $M\times[0,T']$. Letting $\epsilon\downarrow0$, we obtain $A\in\cC$ on this interval. Since $T'<T$ was arbitrary, the assertion follows.
\end{proof}

\begin{remark}
The hypothesis is imposed also where $\min_{u\in W}\Phi_u(A)<0$, since the defining functions may vanish to higher order on the boundary of $\cC$. An inequality imposed only at their zeros may therefore fail to detect motion out of $\cC$. For example, $\Phi(A)=(1-|A|^2)^3$ satisfies $D\Phi=D^2\Phi=0$ when $|A|^2=1$, but the condition $|A|^2\le1$ is not preserved by mean curvature flow of round spheres. In contrast, Hamilton's maximum principle for Ricci flow uses a geometric tangency condition for closed convex sets, independent of the choice of defining functions.
\end{remark}

\begin{samepage}
\begin{proposition}\label{prop:maximum-equality}
Suppose that
\begin{equation}\label{eq:maximum-equality-hypothesis}
 D\Phi_u(A)[\Q(A)]-\sum_jD^2\Phi_u(A)[Z_j,Z_j]\ge0
\end{equation}
whenever $A\in\cC$, $\Phi_u(A)=\min_{v\in W}\Phi_v(A)$, and $Z\in\Sym^3(V)\otimes N$, where $Z_j=Z(e_j,\cdot,\cdot)$. Let a smooth mean curvature flow on a connected manifold satisfy $A\in\cC$ on $M\times(t_1,t_0]$. If $\min_{u\in W}\Phi_u(A(p,t_0))=0$ at some point $p\in M$, then $\min_{u\in W}\Phi_u(A)=0$ on $M\times(t_1,t_0]$.
\end{proposition}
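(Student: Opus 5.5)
This is a strong maximum principle for the function $\phi(p,t)=\min_{u\in W}\Phi_u(A(p,t))$. I would argue by a Bony-type or Hopf-type argument applied to $\phi$ viewed as a viscosity supersolution, and follow the standard route used for the strong maximum principle of Hamilton and Brendle--Schoen.

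\emph{Step 1: $\phi$ is a viscosity supersolution.} Since $A\in\cC$ on $M\times(t_1,t_0]$, we have $\phi\ge0$ there. Let $\psi$ be smooth and touch $\phi$ from below at a point $(x,t)$, so $\psi\le\phi$ near $(x,t)$ for earlier times, with equality at $(x,t)$. Choose $u\in W$ attaining the minimum at $(x,t)$. Use the frames constructed as in the proof of Proposition~\ref{prop:maximum}, keeping $u$ fixed. Then $\psi\le\Phi_u(A)$ nearby with equality at $(x,t)$. The computation in that proof, combined with \eqref{eq:maximum-equality-hypothesis}, gives
\[
(\partial_t-\Delta)\psi\le(\partial_t-\Delta)\Phi_u(A)=D\Phi_u(A)[\Q(A)]-\sum_jD^2\Phi_u(A)[\nabla_jA,\nabla_jA]\ge0,
\]
which points the wrong way. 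So $\phi$ satisfies $(\partial_t-\Delta)\phi\ge0$ in the viscosity (barrier) sense: every smooth lower barrier $\psi$ satisfies $(\partial_t-\Delta)\psi\le0$ is impossible to violate strictly. The key point is that the hypothesis only needs to hold at points of $\cC$ where $u$ attains the minimum, and this is exactly what occurs at a touching point.

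\emph{Step 2: strong maximum principle for viscosity supersolutions.} The zero set $Z=\{\phi=0\}$ is closed in $M\times(t_1,t_0]$. I would show that $\phi$ vanishes on all of $M\times(t_1,t_0]$ by contradiction. Suppose $\phi(p_1,t_2)>0$ for some $t_2\in(t_1,t_0)$. Take a cutoff $\eta\ge0$, supported in a small ball around $p_1$ and positive at $p_1$, such that $\eta\le\phi(\cdot,t_2)$. Let $\eta$ evolve by the linear heat equation with Dirichlet data on a large compact domain, or on all of $M$ if $M$ is closed. Otherwise use an exhaustion and connectedness, with $\eta$ solving $(\partial_t-\Delta_{g(t)})\eta=0$ and vanishing on the boundary. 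The comparison principle for viscosity supersolutions, proved by touching $\phi-\eta+\delta(t-t_2)$ from below at a first zero, gives $\phi\ge\eta$ on $[t_2,t_0]$. The strong maximum principle for the linear equation gives $\eta>0$ in the interior of the domain at all later times. Choosing the domain to contain $p$ yields $\phi(p,t_0)>0$, a contradiction. Since $t_2$ and $p_1$ are arbitrary and the case $t_2=t_0$ follows by continuity from earlier times, we conclude $\phi\equiv0$.

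\emph{Main obstacle.} The delicate step is the comparison in Step 2. The function $\phi$ is only Lipschitz, and the frames depend on the point, so one must make sure that a first touching point of $\phi-\eta$ actually produces a contradiction. I would handle this by adding $\delta(t-t_2)$ and running the argument of Proposition~\ref{prop:maximum}, with $\Phi_u(A)-\eta$ in place of $\Phi_u(A)+\epsilon e^{Lt}$. The interior minimum forces the inequality $0\ge(\partial_t-\Delta)(\Phi_u(A)-\eta)+\delta\ge\delta>0$, and this is exactly where the hypothesis at minimizing $u$ is used. On noncompact $M$ one works on bounded domains, where $\eta$ vanishes on the boundary while $\phi\ge0$ there.
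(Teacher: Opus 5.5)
Your proof is correct in substance, but it takes a more hands-on route than the paper.

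The paper stops at your Step~1. It shows that $\min_{u\in W}\Phi_u(A)$ is a nonnegative viscosity supersolution of the heat equation, and then cites Da Lio's strong minimum principle for viscosity supersolutions on a relatively compact connected domain containing $p$ and $x$. You avoid viscosity theory altogether. You compare $\phi$ directly with a smooth Dirichlet solution of $(\partial_t-\Delta_{g(t)})\eta=0$, using the same first-touching device as in Proposition~\ref{prop:maximum}: freeze a minimizing $u$ in parallel frames at the touching point. You then invoke only the classical strong maximum principle for this linear equation. This gives a self-contained proof in which the nonsmoothness of $\phi$ is handled entirely by the frame-freezing trick. The cost is solving an auxiliary Dirichlet problem with a time-dependent metric on a connected domain containing $p_1$ and $p$. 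The paper's route is shorter because one citation absorbs both the nonsmoothness and the propagation of zeros.

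Two points in your write-up need fixing.

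\emph{The sign in Step~1.} Since $\Phi_u(A)-\psi\ge0$ backward in time near $(x,t)$, with equality at $(x,t)$, one gets $(\partial_t-\Delta)(\Phi_u(A)-\psi)\le0$. That is, $(\partial_t-\Delta)\psi\ge(\partial_t-\Delta)\Phi_u(A)\ge0$. This is exactly the supersolution inequality, so nothing ``points the wrong way,'' and this is precisely the paper's argument. Your final paragraph uses the correct sign, so Step~1 is not actually needed for your comparison argument.

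\emph{The barrier $\phi-\eta+\delta(t-t_2)$.} With this choice the first zero can occur already at $t=t_2$, wherever $\phi=\eta$ there (including points outside the support of $\eta$ where $\phi=0$). It can also occur at boundary points of $\partial\Omega$ where $\phi=0$. At such points you get no contradiction. Use $\phi-\eta+\delta(1+t-t_2)$ instead. It is positive at $t=t_2$ and on $\partial\Omega\times[t_2,t_0]$, so any first zero is an interior point with $s_0>t_2$. There your inequality $0\ge(\partial_t-\Delta)(\Phi_u(A)-\eta)+\delta\ge\delta$ gives the contradiction.
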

\end{samepage}

\begin{proof}
To apply the strong minimum principle to $\min_{v\in W}\Phi_v(A)$, let $\psi$ be a smooth function such that $\psi\le\min_{v\in W}\Phi_v(A)$ near a point $(x,s)\in M\times(t_1,t_0]$ for $t\le s$, with equality at $(x,s)$. Then
\[
 \psi(x,s)=\min_{v\in W}\Phi_v(A(x,s))=\Phi_u(A(x,s))
\]
for some $u\in W$. Fix orthonormal tangent and normal frames at $(x,s)$, extend them by parallel transport along the time line through $x$ and along radial geodesics at each time, and keep $u$ fixed in these frames. Since $\psi\le\Phi_u(A)$ with equality at $(x,s)$, \eqref{eq:evolution}, the Codazzi equation, and \eqref{eq:maximum-equality-hypothesis} give
\begin{align*}
 (\partial_t-\Delta)\psi&\ge(\partial_t-\Delta)\Phi_u(A)\\
 &=D\Phi_u(A)[\Q(A)]-\sum_jD^2\Phi_u(A)[\nabla_jA,\nabla_jA]\ge0
\end{align*}
at $(x,s)$. Thus $\min_{u\in W}\Phi_u(A)$ is a nonnegative viscosity supersolution of the heat equation.

For any $x\in M$, apply the strong minimum principle \cite[Corollary~2.4]{DaLio} on a relatively compact connected domain containing $p$ and $x$. Since $\min_{u\in W}\Phi_u(A(p,t_0))=0$, it follows that $\min_{u\in W}\Phi_u(A(x,t))=0$ for $t_1<t<t_0$, and also for $t=t_0$ by continuity.
\end{proof}

\subsection{Pinching sets and reduction of codimension}
We use the following analogue of \cite[Definition~5.12]{BrendleBook}.

\begin{definition}\label{def:pinching}
Let $\cF\subset\{A\in\Sym^2(V)\otimes N:H>0\}$ be nonempty, closed in $\{H>0\}$, and invariant under positive scalar multiplication and the natural $O(V)\times O(N)$ action. Suppose that every $A\in\cF$ satisfies
\[
 \dim\operatorname{span}\{A(X,Y):X,Y\in V\}=1, \qquad \operatorname{rank}A^{\bH}\ge2.
\]
A set $\mathcal P\subset\{A\in\Sym^2(V)\otimes N:H>0\}$ is called $\cF$-pinching if the following conditions hold:
\begin{itemize}
\item $\mathcal P$ is invariant under the natural $O(V)\times O(N)$ action.
\item $A\in\mathcal P$ is preserved by mean curvature flow of closed immersions in $\R^{n+q}$.
\item For every $\rho>0$, the set $\{A\in\mathcal P:\operatorname{dist}(A,\cF)>\rho|A|\}$ is bounded.
\end{itemize}
\end{definition}

Throughout this subsection and the next, $\cF$ satisfies the assumptions in Definition~\ref{def:pinching}.

\begin{lemma}\label{lem:plane}
Let $F:M\times I\to\R^{n+q}$ be a smooth mean curvature flow, where $M$ is connected and $I$ is an interval. Suppose that $A\in\cF$. Then the image of the flow lies in a fixed affine $(n+1)$-plane.
\end{lemma}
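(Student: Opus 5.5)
The plan is the classical codimension-reduction argument: show that the unit normal $\nu:=\bH/H$ is parallel in the normal bundle and constant in time. The hypothesis $A\in\cF$ means that at each $(p,t)$ the image of $A$ spans a line in $N_pM$. Since $H>0$ on $\cF$, this line contains $\bH\neq0$, so
\[
 A=h\otimes\nu,\qquad h:=\ip A\nu_N=A^{\bH}/H,\qquad \tr h=H>0 .
\]
Here $\nu$ is a smooth unit normal field on $M\times I$, and $\operatorname{rank}h\ge2$ everywhere.

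First I will show that $\nabla^\perp\nu=0$. Differentiating $A=h\otimes\nu$ gives
\[
 \nabla_kA_{ij}=(\nabla_kh_{ij})\nu+h_{ij}\nabla^\perp_k\nu .
\]
Since $|\nu|=1$, the term $\nabla^\perp_k\nu$ is orthogonal to $\nu$. Take the component of the Codazzi symmetry $\nabla_kA_{ij}=\nabla_iA_{kj}$ orthogonal to $\nu$. This gives
\[
 h_{ij}\,\eta_k=h_{kj}\,\eta_i,\qquad \eta:=\nabla^\perp\nu .
\]
Suppose $\eta_k\neq0$ for some $k$. Pair with a fixed normal vector $\zeta$ to obtain the scalar relation $h_{ij}a_k=h_{kj}a_i$, where $a_k=\ip{\eta_k}\zeta$. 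Choose $\zeta$ so that $a\neq0$. Then every column of $h$ is proportional to $a$, so $h=\lambda\,a\otimes a$ has rank at most one. This contradicts $\operatorname{rank}A^{\bH}\ge2$. Hence $\nabla^\perp\nu=0$.

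Next I will show that $\nabla_t^\perp\nu=0$. From the evolution equation \eqref{eq:mean-evolution}, together with $A_\alpha=h\,\nu^\alpha$, the right-hand side equals $\tr(hA^{\bH})\nu$. This is parallel to $\nu$. Write $\bH=H\nu$. Because $\nabla^\perp\nu=0$, we have $\Delta^\perp\bH=(\Delta H)\nu$, so
\[
 (\nabla^\perp_t-\Delta^\perp)\bH=(\partial_tH-\Delta H)\nu+H\nabla_t^\perp\nu .
\]
The term $\nabla_t^\perp\nu$ is orthogonal to $\nu$, and $H>0$. Comparing components orthogonal to $\nu$ therefore gives $\nabla_t^\perp\nu=0$.

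Now I will show that $\nu$ is constant in $\R^{n+q}$. The full spatial derivative splits into tangential and normal parts,
\[
 \partial_k\nu=-\sum_l\ip{A_{kl}}\nu e_l+\nabla_k^\perp\nu ,
\]
and the normal part vanishes. Hence $\partial_k\nu$ lies in $dF(TM)$. The time derivative is
\[
 \partial_t\nu=(\partial_t\nu)^\perp+(\partial_t\nu)^\top ,
\]
where the tangential part equals $-\sum_l\ip{\nu}{\nabla_l\bH}e_l=-\nabla H$, which is tangential. Therefore the normal $(q-1)$-plane field $E:=\nu^\perp\cap N M$ has vanishing normal connection, in the sense that $\nabla^\perp\nu=\nabla^\perp_t\nu=0$. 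Furthermore, $\ip{A(X,Y)}\xi=0$ for every $\xi\in E$. For any section $\xi$ of $E$ with $\nabla^\perp\xi=0$, this gives $\partial_k\xi=-\sum_l\ip{A_{kl}}\xi e_l+\nabla^\perp_k\xi=0$. Likewise, $\partial_t\xi$ has tangential part $-\sum_l\ip{\xi}{\nabla_l\bH}e_l=-\sum_l\ip{\xi}{(\nabla_lH)\nu}e_l=0$, and normal part $\nabla_t^\perp\xi=0$. Such parallel sections exist because the normal curvature on $E$ vanishes: by Ricci, $R^\perp(\cdot,\cdot)\xi=[A_\xi,A_\cdot]$ with $A_\xi=0$. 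Parallel transport in $E$ is consistent because $E$ is invariant under parallel transport, since $\nu$ is parallel. On a connected $M\times I$, I expect this holonomy issue to be the main obstacle. Rather than constructing frames, I would argue directly that the $(n+1)$-plane field $P:=dF(TM)\oplus\R\nu$ is constant. Its derivatives in space and time stay inside $P$: tangential derivatives of $\nu$ are tangential, and second derivatives of $F$ have normal parts $A=h\otimes\nu\in\R\nu$, with $\partial_t$ behaving likewise. Hence the Grassmannian map $(p,t)\mapsto P$ has zero derivative and is constant, equal to some $P_0$. Then $\partial_kF$ and $\partial_tF=\bH$ lie in $P_0$, so $F(p,t)-F(p_0,t_0)\in P_0$ by connectedness of $M\times I$. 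This proves the lemma.
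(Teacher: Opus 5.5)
Your proof is correct and follows essentially the same route as the paper. The $\nu$-orthogonal part of Codazzi plus $\operatorname{rank}h\ge2$ gives $\nabla^\perp\nu=0$; your rank-one argument replaces the paper's eigenbasis argument but is equivalent. The evolution of $\bH$ then gives $\nabla_t^\perp\nu=0$, and $P=dF(TM)\oplus\R\nu$ is a fixed plane containing $\partial_kF$ and $\partial_tF$.

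One wording fix: $\nu$ itself is not constant in $\R^{n+q}$ (think of shrinking spheres). What you actually prove and use is $\partial_k\nu,\partial_t\nu\in dF(TM)$. The detour through parallel sections of $E$ and holonomy is unnecessary.
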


\begin{proof}
Let $\nu=\bH/H$ and $h=A^{\bH}/H$. Then $A=h\otimes\nu$ and $\operatorname{rank}h\ge2$. At any point, choose an orthonormal eigenbasis of $h$. Since $h$ has at least two nonzero eigenvalues, for each $i$ there is $j\ne i$ with $h_{jj}\ne0$. The component orthogonal to $\nu$ in the Codazzi equation gives
\[
 h_{jj}\nabla_i^\perp\nu=h_{ij}\nabla_j^\perp\nu=0.
\]
Thus $\nabla^\perp\nu=0$. Equation~\eqref{eq:mean-evolution} now becomes
\[
 ((\partial_t-\Delta)H)\nu+H\nabla_t^\perp\nu=H|h|^2\nu,
\]
and hence $\nabla_t^\perp\nu=0$.

Let $\overline\nabla$ denote the Euclidean connection. We have
\[
 \overline\nabla_X\mathrm dF(Y)=\mathrm dF(\nabla_XY)+h(X,Y)\nu,\qquad \overline\nabla_X\nu=-\mathrm dF(hX).
\]
For a vector field $X$ on $M$, $\partial_tF=H\nu$ and $\langle\nu,\mathrm dF(X)\rangle=0$ give
\[
 \partial_t\mathrm dF(X)=X(H)\nu-H\mathrm dF(hX),\qquad \partial_t\nu=-\mathrm dF(\nabla H).
\]
These identities show that $\mathrm dF(TM)\oplus\R\nu$ is a fixed $(n+1)$-dimensional linear subspace $P$. Let $\pi_{P^\perp}$ denote orthogonal projection onto $P^\perp$. Then
\[
 \mathrm d(\pi_{P^\perp}F)=0,\qquad \partial_t(\pi_{P^\perp}F)=H\pi_{P^\perp}\nu=0.
\]
Since $M\times I$ is connected, $\pi_{P^\perp}F$ is constant. Thus the image of the flow lies in a fixed affine $(n+1)$-plane.
\end{proof}

The following lemma will ensure that $H$ is positive on the nonflat limits to which we apply Lemma~\ref{lem:plane}.

\begin{lemma}\label{lem:complete-positive}
Let $F:M\times I\to\R^{n+q}$ be a smooth mean curvature flow, where $M$ is connected and $I$ is an interval. Suppose that $H$ is positive at some point and that, for a constant $C<\infty$,
\[
 |\nabla H^{-1}|+|\partial_tH^{-2}|\le C \qquad\text{wherever }H>0.
\]
Then $H>0$ throughout $M\times I$.
\end{lemma}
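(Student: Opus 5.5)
Set $U=\{(p,t)\in M\times I: H(p,t)>0\}$. By assumption $U$ is nonempty, and it is open because $H=|\bH|$ is continuous. Since $M\times I$ is connected, it suffices to show that $U$ is relatively closed in $M\times I$. So let $(p_1,t_1)$ be a limit of points of $U$, and suppose for contradiction that $H(p_1,t_1)=0$. The plan is to use the gradient bounds on $H^{-1}$ and $H^{-2}$ to show that $H$ cannot tend to zero along a path of finite length inside $U$.

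The key step is a Lipschitz bound along paths. Suppose $\gamma:[0,1)\to M\times I$ is a path lying in $U$ for $\sigma<1$ with $\gamma(\sigma)\to(p_1,t_1)$. I would choose $\gamma$ as a concatenation of a spatial segment at a fixed time and a time segment at a fixed point, as follows.

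\emph{Spatial case.} Suppose $(q,t_1)\in U$ lies close to $p_1$ in $g(t_1)$. Take a $g(t_1)$-geodesic $\gamma$ from $q$ to $p_1$ of length $d$, and let $\sigma_*$ be the first parameter with $H=0$. On $[0,\sigma_*)$ the bound $|\nabla H^{-1}|\le C$ gives
\[
H^{-1}(\gamma(\sigma))\le H^{-1}(q)+C d,
\]
so $H\ge (H(q)^{-1}+Cd)^{-1}>0$ along the segment. By continuity $H(\gamma(\sigma_*))>0$, which contradicts the choice of $\sigma_*$.

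\emph{Time case.} Along $t\mapsto(p,t)$ inside $U$, the bound $|\partial_tH^{-2}|\le C$ gives
\[
H^{-2}(p,t)\le H^{-2}(p,s)+C|t-s|,
\]
so again $H$ stays bounded below on any finite time interval.

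To handle a general limit point, I combine the two cases. There are points $(q,s)\in U$ arbitrarily close to $(p_1,t_1)$. Move first in time at the fixed point $q$ from $s$ to $t_1$: this keeps $H(q,t)>0$ for all $t$ between $s$ and $t_1$, since $I$ is an interval and the lower bound persists up to and past any putative first zero. Then move in space at time $t_1$ from $q$ to $p_1$ as above. This gives $H(p_1,t_1)>0$, the desired contradiction. In fact the two cases together show directly that $U$ contains every point reachable by such broken paths, and these are all of $M\times I$ because $M$ is connected.

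The main obstacle is small. The bounds only hold where $H>0$, so each estimate must be run up to the first zero along the path and then closed off by continuity, as in the spatial case. One should also take geodesics with respect to the fixed-time metric, and note that $H$ is smooth wherever it is positive, so that $H^{-1}$ and $H^{-2}$ are differentiable there. No completeness is needed, because any two points of the connected manifold $M$ are joined by a piecewise smooth curve of finite $g(t_1)$-length, and the spatial estimate applies along such a curve just as along a geodesic.
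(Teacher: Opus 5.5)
Your proposal is correct and is essentially the paper's argument: the paper also first propagates $H>0$ in time at a fixed point, using $|\partial_tH^{-2}|\le C$ up to a putative first zero. It then propagates $H>0$ in space at a fixed time along a path of finite $g(s)$-length, using $|\nabla H^{-1}|\le C$. The open--closed framing is unnecessary, as you note yourself, since the broken-path argument reaches every point of $M\times I$ directly from any single point where $H>0$.
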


\begin{proof}
Choose $(p,t)\in M\times I$ with $H(p,t)>0$. Suppose that $H(p,s)\le0$ for some $s\in I$ with $s>t$, and set $s_0=\inf\{s\in I:s>t,\ H(p,s)\le0\}$. For $t<s<s_0$, integrating $|\partial_sH^{-2}|\le C$ gives
\[
 H(p,s)^{-2}\le H(p,t)^{-2}+C|s-t|\le H(p,t)^{-2}+C|s_0-t|.
\]
The right-hand side is bounded, giving a contradiction as $s\uparrow s_0$. The same argument for $s<t$ shows that $H(p,s)>0$ for every $s\in I$.

Fix $(x,s)\in M\times I$, and choose a smooth path $\gamma:[0,1]\to M$ from $p$ to $x$. Suppose that $H(\gamma(r),s)$ vanishes, and let $r_0\in(0,1]$ be its first zero. For $0\le r<r_0$, integration of $|\nabla H^{-1}|\le C$ along $\gamma$ gives
\[
 H(\gamma(r),s)^{-1}\le H(p,s)^{-1}+C\int_0^r|\dot\gamma(\rho)|_{g(s)}\,d\rho\le H(p,s)^{-1}+C\operatorname{Length}_{g(s)}(\gamma).
\]
The right-hand side is bounded, giving a contradiction.
\end{proof}

\subsection{Blow-up limits}
For a flow $F:M\times[0,T)\to\R^{n+q}$, points $(p_j,t_j)$, and $\lambda_j>0$, define
\begin{equation}\label{eq:rescaling}
 F_j(x,s)=\lambda_j \bigl(F(x,t_j+\lambda_j^{-2}s)-F(p_j,t_j)\bigr),\qquad -\lambda_j^2t_j\le s<\lambda_j^2(T-t_j).
\end{equation}
Let $A_j$ denote the second fundamental form of $F_j$. We use smooth pointed convergence in the sense of \cite[Definition~4.3]{NaffCanonical}.

In Sections~6 and~7, the bound in Lemma~\ref{lem:complete-positive} will follow from Proposition~\ref{prop:derivatives}.

\begin{proposition}\label{prop:blowup}
Let $\mathcal P$ be $\cF$-pinching, and let a closed connected smooth flow satisfy $A(\cdot,t_0)\in\mathcal P$. Assume that $|A|\le CH$ for a constant $C<\infty$ and that the derivative bound in Lemma~\ref{lem:complete-positive} holds on $M\times[t_0,T)$. Suppose $\lambda_j\to\infty$ and $t_j\ge t_0$. Every connected nonflat smooth pointed limit of \eqref{eq:rescaling} satisfies $H_\infty>0$ and $A_\infty\in\cF$ throughout, and lies in a fixed affine $(n+1)$-plane.
\end{proposition}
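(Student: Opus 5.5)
The proof has three steps: first positivity of $H_\infty$, then membership in $\cF$, and finally Lemma~\ref{lem:plane}.

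\textbf{Positivity of $H_\infty$.} The derivative bound $|\nabla H^{-1}|+|\partial_tH^{-2}|\le C$ is scale invariant. Under \eqref{eq:rescaling}, $H_j=\lambda_j^{-1}H$, so $|\nabla_j H_j^{-1}|=|\nabla H^{-1}|$ and $|\partial_s H_j^{-2}|=|\partial_t H^{-2}|$. Hence the bound holds for every $F_j$ wherever $H_j>0$, on the range $s\ge -\lambda_j^2(t_j-t_0)$. This range contains any fixed compact time interval of the limit once $j$ is large. Note that $H>0$ on $M\times[t_0,T)$: the Definition~\ref{def:pinching} sets lie in $\{H>0\}$, and $A\in\mathcal P$ is preserved. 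Since $H_j\to H_\infty$ smoothly on compact sets, the bound passes to the limit on the open set $\{H_\infty>0\}$.

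Next I need one point where $H_\infty>0$. The limit is nonflat, so $|A_\infty|\ne0$ somewhere. Since $|A_j|\le CH_j$ is scale invariant, $|A_\infty|\le CH_\infty$ in the limit, and so $H_\infty>0$ at that point. Lemma~\ref{lem:complete-positive}, applied on the connected limit, then gives $H_\infty>0$ everywhere.

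\textbf{Membership in $\cF$.} Fix a point $(x,s)$ of the limit and approximating points $(x_j,s_j)$, which correspond to times $t_j+\lambda_j^{-2}s_j\ge t_0$ of the original flow. There $A\in\mathcal P$, because $\mathcal P$ is preserved from $t_0$. The rescaled tensor $A_j(x_j,s_j)=\lambda_j^{-1}A$ has norm converging to $|A_\infty(x,s)|>0$, so the unrescaled $|A|$ at these points tends to infinity. Fix $\rho>0$. The pinching property says $\{A\in\mathcal P:\operatorname{dist}(A,\cF)>\rho|A|\}$ is bounded, so for large $j$ we get $\operatorname{dist}(A,\cF)\le\rho|A|$ at these points. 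Since $\cF$ is a cone, this inequality is scale invariant and transfers to $A_j(x_j,s_j)$.

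To pass to the limit, let $j\to\infty$. The identification of $A_j$ with elements of $\Sym^2(V)\otimes N$ via frames converges by smooth convergence, and distance to an $O(V)\times O(N)$-invariant set does not depend on the frame. This gives $\operatorname{dist}(A_\infty,\cF)\le\rho|A_\infty|$. Letting $\rho\to0$ yields $A_\infty\in\overline{\cF}$. Because $H_\infty>0$ and $\cF$ is closed in $\{H>0\}$, we conclude $A_\infty\in\cF$.

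\textbf{Conclusion and main obstacle.} Lemma~\ref{lem:plane} now applies to the connected limit flow and places it in a fixed affine $(n+1)$-plane. I expect the main care to be needed in the positivity step: the point where $H_\infty>0$ must come from nonflatness via $|A|\le CH$, and the derivative bound must hold on the limit's whole time interval, which requires $\lambda_j^2(t_j-t_0)\to\infty$ or at least that it covers the limit's domain. The limit's time domain is determined by where the rescaled flows are defined from $t_0$ on, so restricting attention to $t\ge t_0$ is consistent. Closedness of $\cF$ only within $\{H>0\}$ is exactly why positivity must be established first.
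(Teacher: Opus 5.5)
Your three-step structure matches the paper's, and the membership step is fine. Using $|A|\to\infty$ at the approximating points instead of the uniform bound $\operatorname{dist}(A,\cF)\le\rho|A|+R_\rho$ is only a cosmetic difference.

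The gap is the assertion that the range $s\ge-\lambda_j^2(t_j-t_0)$ eventually contains every compact time interval of the limit. Your justification is that the limit's time domain is determined by where the rescaled flows are defined from $t_0$ on. This misreads \eqref{eq:rescaling}: $F_j$ is defined for $s\ge-\lambda_j^2t_j$, that is, back to the original time $0$. The facts you need ($A\in\mathcal P$, $|A|\le CH$, the derivative bound, and even $H>0$) are only available from $t_0$ on. In the applications $t_0=\tau=T/4>0$. If, say, $t_j=t_0$ for all $j$, the limit lives on times corresponding to original times before $t_0$. There, two of your steps are unjustified: the transfer of the derivative bound and of $|A|\le CH$, and the choice of approximating points $(x_j,s_j)$ with $t_j+\lambda_j^{-2}s_j\ge t_0$.

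The missing step is short and uses nonflatness. Suppose $\lambda_j^2(t_j-t_0)$ stayed bounded along a subsequence. Then $t_j\to t_0$, and for $s$ in any compact interval the original times $t_j+\lambda_j^{-2}s$ tend to $t_0$. The flow is smooth on the closed manifold $M$ on $[0,t_0+\delta]$ for some $\delta>0$, so $|A|$ is bounded there. Hence $|A_j|=\lambda_j^{-1}|A|\to0$ on compact sets, and the limit would be flat, a contradiction. Therefore $\lambda_j^2(t_j-t_0)\to\infty$. This is exactly what makes all the estimates on $M\times[t_0,T)$ pass to the entire limit, and with it the rest of your argument goes through.
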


\begin{proof}
Let $F_\infty$ be a connected nonflat smooth pointed limit of \eqref{eq:rescaling}. If $\lambda_j^2(t_j-t_0)$ were bounded along a subsequence, then $t_j\to t_0$ and the curvature bound near $t_0$ would give $A_\infty=0$. Thus $\lambda_j^2(t_j-t_0)\to\infty$. By \eqref{eq:rescaling}, the estimates on $M\times[t_0,T)$ pass to the limit, so $|A_\infty|\le CH_\infty$ and the bound in Lemma~\ref{lem:complete-positive} holds wherever $H_\infty>0$. Since the limit is nonflat, $H_\infty$ is positive somewhere, and Lemma~\ref{lem:complete-positive} gives $H_\infty>0$ everywhere.

Fix $\rho>0$. Since $A\in\mathcal P$ is preserved, Definition~\ref{def:pinching} gives a constant $R_\rho$ such that $\operatorname{dist}(A,\cF)\le\rho|A|+R_\rho$ for $t\ge t_0$. The invariance of $\cF$ under positive scalar multiplication then gives
\[
 \operatorname{dist}(A_j,\cF) =\lambda_j^{-1}\operatorname{dist}(A,\cF) \le\rho|A_j|+\lambda_j^{-1}R_\rho.
\]
Letting $j\to\infty$ and then $\rho\downarrow0$, we obtain $\operatorname{dist}(A_\infty,\cF)=0$. Since $H_\infty>0$ and $\cF$ is closed in $\{H>0\}$, we have $A_\infty\in\cF$. Lemma~\ref{lem:plane} now implies that the image lies in a fixed affine $(n+1)$-plane.
\end{proof}

\subsection{Hypersurface rigidity}
\begin{lemma}\label{lem:ratio}
Let $F:M\times(t_1,t_0]\to\R^{n+1}$ be a smooth mean curvature flow of a connected hypersurface with $H>0$. Suppose $|A|^2\le\beta H^2$ for some constant $\beta<1$. If $|A(p,t_0)|^2=\beta H(p,t_0)^2$ at some $p\in M$, then $\nabla A=0$ on $M\times(t_1,t_0]$, and all nonzero principal curvatures are equal and positive.
\end{lemma}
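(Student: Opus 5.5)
We study the function $f=|A|^2/H^2$, or equivalently $\Phi=\beta H^2-|A|^2$, and apply the strong maximum principle. This is the classical hypersurface computation (Huisken). For a hypersurface flow with $H>0$,
\[
(\partial_t-\Delta)|A|^2=-2|\nabla A|^2+2|A|^4,\qquad (\partial_t-\Delta)H^2=-2|\nabla H|^2+2|A|^2H^2 .
\]
These give
\[
(\partial_t-\Delta)\frac{|A|^2}{H^2}=\frac{2}{H}\Big\langle\nabla H,\nabla\frac{|A|^2}{H^2}\Big\rangle-\frac{2}{H^4}\,|H\nabla_iA_{jk}-\nabla_iH\,A_{jk}|^2 .
\]
So $f$ is a subsolution of a linear parabolic equation with a smooth drift term on $M\times(t_1,t_0]$. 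We have $f\le\beta$, with equality at the interior-in-space point $(p,t_0)$.

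First, apply the strong maximum principle for the operator $\partial_t-\Delta-\frac2H\langle\nabla H,\nabla\cdot\rangle$ to $\beta-f\ge0$. On the connected manifold, this gives $f\equiv\beta$ on $M\times(t_1,t_0]$. Here we use the parabolic strong maximum principle backward in time: equality at the final time propagates to all earlier times on every compact connected subdomain. This can also be phrased through Proposition~\ref{prop:maximum-equality} with $W$ a point and $\Phi=\beta H^2-|A|^2$. Next, plugging $f\equiv\beta$ back into the evolution equation forces $H\nabla A=\nabla H\otimes A$ everywhere.

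Second, we deduce $\nabla A=0$ from $H\nabla A=\nabla H\otimes A$, and this is the step needing care. Write $\nabla_iA_{jk}=\nabla_i\log H\cdot A_{jk}$. The Codazzi symmetry gives $\nabla_iH\,A_{jk}=\nabla_jH\,A_{ik}$ for all $i,j,k$. Suppose $\nabla H\ne0$ at a point, and take $e_1=\nabla H/|\nabla H|$. Then for $i\ne1$ we get $A_{ik}=0$ for all $k$, so $A=\lambda e_1\otimes e_1$ has rank one. That forces $|A|^2=H^2$, which contradicts $\beta<1$. Hence $\nabla H=0$, and so $\nabla A=0$. This is where the hypothesis $\beta<1$ enters.

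Third, we identify the principal curvatures. Since $\nabla A=0$, the reaction term must vanish or be consistent with it. Indeed, $\partial_t|A|^2=2|A|^4$ and $\partial_tH^2=2|A|^2H^2$ are compatible with $f$ constant, so we need more. Because $\nabla A=0$, the Simons identity $\Delta A=\nabla^2H+HA^2-|A|^2A$ (with $\Delta A=0$ and $\nabla^2H=0$) gives $HA^2=|A|^2A$. Thus each principal curvature $\kappa$ satisfies $\kappa(H\kappa-|A|^2)=0$. So every nonzero principal curvature equals $|A|^2/H>0$. The main obstacle is making the strong maximum principle and the Codazzi rank argument rigorous, and both are routine given the above.
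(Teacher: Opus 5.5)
Your proof is correct and follows the paper's route: the evolution of $|A|^2/H^2$ with its drift term, the strong maximum principle, then Codazzi together with $\beta<1$ to force $\nabla H=0$, and finally the curvature commutation. The differences are cosmetic: you use a frame adapted to $\nabla H$ and a rank-one contradiction where the paper uses an eigenbasis of $A$ with two nonzero principal curvatures, and you use the contracted Simons identity $HA^2=|A|^2A$ in place of the paper's pairwise relation $\lambda_j\lambda_k(\lambda_j-\lambda_k)=0$.

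Drop your aside that the first step could instead go through Proposition~\ref{prop:maximum-equality} with $\Phi=\beta H^2-|A|^2$. Its hypothesis requires $|Z|^2\ge\beta|\tr Z|^2$ for all $Z\in\Sym^3(V)$, and this fails once $\beta>3/(n+2)$. So for general $\beta<1$ you genuinely need the quotient $|A|^2/H^2$ and its drift equation.
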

\begin{proof}
Set $f=|A|^2/H^2$. The hypersurface evolution equations give
\[
 (\partial_t-\Delta)f =\frac2H\ip{\nabla H}{\nabla f} -\frac2{H^4}|H\nabla A-A\otimes\nabla H|^2.
\]
The strong maximum principle \cite[Corollary~2.4]{DaLio} gives $f\equiv\beta$, since $f\le\beta$ and $f(p,t_0)=\beta$. Hence $H\nabla A=A\otimes\nabla H$, and Codazzi yields
\[
 A_{jk}\partial_iH=A_{ik}\partial_jH.
\]
Choose an orthonormal eigenbasis of $A$. Since $|A|^2<H^2$, at least two principal curvatures are nonzero. For each $i$, choose $j\ne i$ with $A_{jj}\ne0$ and take $k=j$ in the above identity. We obtain $\partial_iH=0$, and therefore $\nabla H=0$ and $\nabla A=0$. Commuting covariant derivatives of $A$ and using the Gauss equation gives
\[
 \lambda_j\lambda_k(\lambda_j-\lambda_k)=0 \qquad(j\ne k).
\]
All nonzero principal curvatures are therefore equal, and they are positive because $H>0$.
\end{proof}

\allowdisplaybreaks[2]
\raggedbottom



\section{Preserved curvature conditions}\label{sec:conditions}

We prove that the conditions $\cC_1$ and $\cC_2$ are preserved by
mean curvature flow. Throughout this section, $n\ge10$ and
$0\le\eta\le1/2$. Let $V$ and $N$ be the vector spaces fixed in Section~2. For $A\in\Sym^2(V)\otimes N$, write
\[
 A^\circ=A-\frac1n\Id\otimes\bH.
\] 
For a unit vector $u\in V$, define
\begin{equation}\label{eq:phi1}
 \Phi_u^{1,\eta}(A)=\frac{1-\eta}{n-1}H^2-|A^\circ|^2-n\sum_\alpha|A_\alpha^\circ u|^2.
\end{equation}
For a unit two-form $u\in\Lambda^2V$, define
\begin{equation}\label{eq:phi2}
\begin{aligned}
 \Phi_u^{2,\eta}(A)
 &=\frac{2(n+1)(1-\eta)}{(n-1)(n-2)}H^2-\frac{2(2n-1)}{n-1}|A^\circ|^2\\
 &\quad -(n-4)\sum_\alpha
             \ip{((A_\alpha^\circ)^2)^{[2]}u}u-2\sum_\alpha|(A_\alpha^\circ)^{[2]}u|^2.
\end{aligned}
\end{equation}
These two families of quadratic forms are equivariant under the natural $O(V)\times O(N)$ action. We define
\[
 \cC_i(\eta)=\{A:\Phi_u^{i,\eta}(A)\ge0
                    \text{ for every unit }u\},
\]
where $u\in V$ for $i=1$ and $u\in\Lambda^2V$ for $i=2$. Observe that by definition we have $\cC_i=\cC_i(0)$.

A direct calculation using \eqref{eq:reaction} gives the following
identities.

\Needspace{8\baselineskip}
\begin{lemma}\label{lem:phi-derivatives}
Let $A,Z,W\in\Sym^2(V)\otimes N$.
\begin{enumerate}[label=(\roman*)]
\item If $u\in V$ is a unit vector, then
\begin{equation}\label{eq:phi1-derivative}
 \frac12D\Phi_u^{1,\eta}(A)[\Q(A)] =\frac{1-\eta}{n-1}|A^{\bH}|^2 -\sum_\alpha\tr\bigl(A_\alpha^\circ(\Q(A)_\alpha)^\circ\bigr) -n\sum_\alpha\ip{A_\alpha^\circ u}{(\Q(A)_\alpha)^\circ u},
\end{equation}
\begin{equation}\label{eq:phi1-hessian}
 \frac12D^2\Phi_u^{1,\eta}(A)[Z,W] =\frac{1-\eta}{n-1}\ip{\tr Z}{\tr W} -\sum_\alpha\tr(Z_\alpha^\circ W_\alpha^\circ) -n\sum_\alpha\ip{Z_\alpha^\circ u}{W_\alpha^\circ u}.
\end{equation}

\item If $u\in\Lambda^2V$ is a unit two-form, then
\begin{equation}\label{eq:phi2-derivative}
\begin{aligned}
 \frac12D\Phi_u^{2,\eta}(A)[\Q(A)] &=\frac{2(n+1)(1-\eta)}{(n-1)(n-2)}|A^{\bH}|^2 -\frac{2(2n-1)}{n-1} \sum_\alpha\tr\bigl(A_\alpha^\circ(\Q(A)_\alpha)^\circ\bigr)\\
 &\quad-(n-4)\sum_{\alpha,j} \ip{A_\alpha^\circ\iota_{e_j}u} {(\Q(A)_\alpha)^\circ\iota_{e_j}u}\\
 &\quad-2\sum_\alpha \ip{(A_\alpha^\circ)^{[2]}u} {((\Q(A)_\alpha)^\circ)^{[2]}u},
\end{aligned}
\end{equation}
\begin{equation}\label{eq:phi2-hessian}
\begin{aligned}
 \frac12D^2\Phi_u^{2,\eta}(A)[Z,W] &=\frac{2(n+1)(1-\eta)}{(n-1)(n-2)}\ip{\tr Z}{\tr W} -\frac{2(2n-1)}{n-1}\sum_\alpha\tr(Z_\alpha^\circ W_\alpha^\circ)\\
 &\quad-(n-4)\sum_{\alpha,j} \ip{Z_\alpha^\circ\iota_{e_j}u}{W_\alpha^\circ\iota_{e_j}u} -2\sum_\alpha\ip{(Z_\alpha^\circ)^{[2]}u}{(W_\alpha^\circ)^{[2]}u}.
\end{aligned}
\end{equation}

\item For either family, we have
\begin{equation}\label{eq:phi-quadratic-identities}
 D\Phi_u^{i,\eta}(A)[Z]=D^2\Phi_u^{i,\eta}(A)[A,Z],\qquad \frac12D^2\Phi_u^{i,\eta}(A)[Z,Z]=\Phi_u^{i,\eta}(Z).
\end{equation}
Moreover, the symmetric bilinear form $(Z,W)\mapsto-D^2\Phi_u^{i,\eta}(A)[Z,W]$ defines an inner product on traceless tensors.
\end{enumerate}
\end{lemma}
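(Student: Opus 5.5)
The plan is to observe first that each $\Phi_u^{i,\eta}$ is a homogeneous quadratic form in $A$, and then to derive (i)–(iii) from this structure by polarization. Write $\Phi_u^{i,\eta}(A)=B_u(A,A)$, where $B_u$ is the symmetric bilinear form obtained by polarizing each term. The first term is $c_\eta\ip{\tr A}{\tr A}$, with $c_\eta=\frac{1-\eta}{n-1}$ for $i=1$ and $c_\eta=\frac{2(n+1)(1-\eta)}{(n-1)(n-2)}$ for $i=2$. The remaining terms are minus positive multiples of $\sum_\alpha\tr(A_\alpha^\circ A_\alpha^\circ)$, of $\sum_\alpha\ip{A_\alpha^\circ u}{A_\alpha^\circ u}$, of $\sum_\alpha\ip{((A_\alpha^\circ)^2)^{[2]}u}{u}$, and of $\sum_\alpha|(A_\alpha^\circ)^{[2]}u|^2$. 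Since $A\mapsto A^\circ$ and $A\mapsto\tr A$ are linear, each of these is a genuine quadratic form. This gives (iii) at once: $D\Phi(A)[Z]=2B_u(A,Z)$ and $D^2\Phi(A)[Z,W]=2B_u(Z,W)$ (independent of $A$), so $D\Phi(A)[Z]=D^2\Phi(A)[A,Z]$ and $\tfrac12D^2\Phi(A)[Z,Z]=B_u(Z,Z)=\Phi(Z)$.

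Formulas (i) and (ii) then follow from $\frac12D\Phi(A)[\Q(A)]=B_u(A,\Q(A))$ and $\frac12D^2\Phi[Z,W]=B_u(Z,W)$, once each term of $B_u$ is identified.

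For the trace term, I will use $\tr\Q(A)_\alpha=\sum_\beta\tr(A_\alpha A_\beta)\tr A_\beta$. This holds because the three cubic terms in \eqref{eq:reaction} have total trace $\tr(A_\beta^2A_\alpha)+\tr(A_\alpha A_\beta^2)-2\tr(A_\beta A_\alpha A_\beta)=0$ by cyclicity. Hence $\ip{\tr A}{\tr\Q(A)}=\sum_{\alpha,\beta}H_\alpha H_\beta\tr(A_\alpha A_\beta)=|A^{\bH}|^2$, which produces the leading term.

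The $|A^\circ|^2$ term polarizes to $\sum_\alpha\tr(A_\alpha^\circ(\Q_\alpha)^\circ)$, and $|A_\alpha^\circ u|^2$ polarizes to $\ip{A_\alpha^\circ u}{(\Q_\alpha)^\circ u}$.

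For the term $\ip{(U^2)^{[2]}u}{u}$, I will show that it equals $\sum_j|U\iota_{e_j}u|^2$ for symmetric $U$, viewing $u$ as a skew endomorphism (with the normalization of the paper). Expanding $U^{[2]}$ on $e_i\wedge e_k$, one gets $\ip{(U^2)^{[2]}u}{u}=\sum_{i,k,l}u_{ik}(U^2)_{il}u_{lk}$ up to the chosen convention. This is $\tr(u^TU^2u)=\sum_j|U u e_j|^2$, which matches $\sum_j|U\iota_{e_j}u|^2$. Polarizing gives $\sum_j\ip{Z^\circ\iota_{e_j}u}{W^\circ\iota_{e_j}u}$. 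The last term polarizes directly, since $U\mapsto U^{[2]}$ is linear.

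The remaining claim is that $-D^2\Phi$ is positive definite on traceless $Z$. For such $Z$ the $H$-term vanishes, so
\[
-\tfrac12D^2\Phi[Z,Z]=a|Z^\circ|^2+(\text{nonnegative terms}),
\]
with $a=1$ or $a=\frac{2(2n-1)}{n-1}>0$. The other terms are $n\sum|Z_\alpha u|^2\ge0$ and $(n-4)\sum_j|Z_\alpha\iota_{e_j}u|^2+2|Z_\alpha^{[2]}u|^2\ge0$ since $n\ge10$. Thus the form is positive definite on traceless tensors.

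I expect the main obstacle to be the bookkeeping of normalization conventions for $\Lambda^2V$. The inner product on two-forms, and the meaning of $\iota_{e_j}u$ as a vector, must be fixed consistently so that the identity $\ip{(U^2)^{[2]}u}{u}=\sum_j|U\iota_{e_j}u|^2$ holds exactly without a factor of $2$. I would check this by computing with $u=e_1\wedge e_2$ and diagonal $U$: $(U^2)^{[2]}u=(\lambda_1^2+\lambda_2^2)u$, while $\iota_{e_1}u=e_2$ and $\iota_{e_2}u=-e_1$ give $\lambda_2^2+\lambda_1^2$. So the identity holds with the standard convention $|e_1\wedge e_2|=1$.
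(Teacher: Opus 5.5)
Your proposal is correct and is essentially the paper's argument. The paper only says the identities follow by direct calculation from \eqref{eq:reaction}, and your approach makes that calculation explicit: you polarize the quadratic forms $\Phi_u^{i,\eta}$, use cyclicity to get $\ip{\bH}{\tr\Q(A)}=|A^{\bH}|^2$, use $\ip{(U^2)^{[2]}u}{u}=\tr(u^TU^2u)=\sum_j|U\iota_{e_j}u|^2$ (exact when $e_k\wedge e_l$, $k<l$, are orthonormal), and read off positive definiteness on traceless tensors from the resulting formula for $-\Phi_u^{i,\eta}(Z)$.
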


\begin{lemma}\label{lem:scalar}
Let $A\in\Sym^2(V)\otimes N$. For a unit vector $u\in V$ and a unit two-form $u\in\Lambda^2V$, respectively, we have
\[
\begin{gathered}
 |A^\circ|^2\le-\Phi_u^{1,\eta}(A^\circ)\le n|A^\circ|^2,\\
 \frac{2(2n-1)}{n-1}|A^\circ|^2 \le-\Phi_u^{2,\eta}(A^\circ)\le\frac{n(n+1)}{n-1}|A^\circ|^2.
\end{gathered}
\]
If $A\in\cC_i(\eta)$, then
\begin{equation}\label{eq:coarse}
 |A|^2\le\frac2nH^2,
\end{equation}
and
\begin{equation}\label{eq:shape-square-bound}
 \sum_\alpha(A_\alpha^\circ)^2 \le\begin{cases} (1-\eta)H^2\Id/n^2,&i=1,\\
 2(1-\eta)H^2\Id/[n(n-2)],&i=2.
 \end{cases}
\end{equation}
\end{lemma}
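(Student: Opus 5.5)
The plan is to reduce everything to elementary eigenvalue inequalities for a single traceless symmetric matrix and then sum over $\alpha$. Throughout, write $B_\alpha=A_\alpha^\circ$ and $S=|A^\circ|^2=\sum_\alpha|B_\alpha|^2$.

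\emph{Two-sided bounds for $\Phi^{1,\eta}_u$.} Since $\tr A^\circ=0$, the $H^2$ term drops and
\[
-\Phi^{1,\eta}_u(A^\circ)=S+n\sum_\alpha\ip{B_\alpha^2u}u .
\]
The lower bound is immediate. For the upper bound I will use that a traceless symmetric $B$ with eigenvalues $\lambda_i$ satisfies
\[
\lambda_1^2=\Bigl(\sum_{i\ge2}\lambda_i\Bigr)^2\le(n-1)\sum_{i\ge2}\lambda_i^2 ,
\]
which gives $\lambda_1^2\le\frac{n-1}n|B|^2$. Hence $\ip{B^2u}u\le\frac{n-1}n|B|^2$ for unit $u$, and therefore
\[
\sum_\alpha B_\alpha^2\le\frac{n-1}nS\,\Id .
\]
Inserting this gives $-\Phi^{1,\eta}_u(A^\circ)\le S+(n-1)S=nS$.

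\emph{Two-sided bounds for $\Phi^{2,\eta}_u$.} The lower bound follows because $(B^2)^{[2]}\ge0$ whenever $B^2\ge0$, and $|B^{[2]}u|^2\ge0$. For the upper bound it suffices to prove
\[
(n-4)(B^2)^{[2]}+2(B^{[2]})^2\le(n-2)|B|^2\Id_{\Lambda^2}
\]
for each $\alpha$ separately, and then sum. In an eigenbasis of $B$ this operator is diagonal on $e_i\wedge e_j$ with eigenvalue
\[
(n-4)(\lambda_i^2+\lambda_j^2)+2(\lambda_i+\lambda_j)^2=(n-2)(\lambda_i^2+\lambda_j^2)+4\lambda_i\lambda_j .
\]
Tracelessness and Cauchy--Schwarz give $\sum_{k\ne i,j}\lambda_k^2\ge(\lambda_i+\lambda_j)^2/(n-2)$. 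Therefore
\[
(n-2)|B|^2\ge(n-2)(\lambda_i^2+\lambda_j^2)+(\lambda_i+\lambda_j)^2\ge(n-2)(\lambda_i^2+\lambda_j^2)+4\lambda_i\lambda_j .
\]
Adding the coefficient $\frac{2(2n-1)}{n-1}$ and using $\frac{2(2n-1)}{n-1}+(n-2)=\frac{n(n+1)}{n-1}$ gives the stated upper bound.

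\emph{Consequences for $A\in\cC_i(\eta)$.} Each $\Phi_u$ splits as $\Phi^{i,\eta}_u(A)=c_i(1-\eta)H^2+\Phi^{i,\eta}_u(A^\circ)$, since it is the $H^2$ term plus a quadratic form in $A^\circ$. For $i=1$ the lower bound gives $S\le\frac{1-\eta}{n-1}H^2$, so
\[
|A|^2=S+\frac{H^2}n\le\Bigl(\frac1{n-1}+\frac1n\Bigr)H^2\le\frac2nH^2 .
\]
For $i=2$ it gives $S\le\frac{n+1}{(2n-1)(n-2)}H^2$, and this is $\le H^2/n$ precisely when $n^2-6n+2\ge0$, which holds for $n\ge10$. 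For \eqref{eq:shape-square-bound} with $i=1$, set $x=\sum_\alpha\ip{B_\alpha^2u}u$. Combining $nx+S\le\frac{1-\eta}{n-1}H^2$ with $S\ge\frac n{n-1}x$ gives $\frac{n^2}{n-1}x\le\frac{1-\eta}{n-1}H^2$, which is the claim.

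\emph{The case $i=2$ of \eqref{eq:shape-square-bound}: main obstacle.} Here I will fix a unit $v$, take $u=v\wedge w$ with $w\perp v$ a unit vector, and average the inequality $\Phi^{2,\eta}_{v\wedge w}(A)\ge0$ over an orthonormal basis $w$ of $v^\perp$. Averaging is legitimate because $\Phi_u$ is linear in $u\otimes u$. A rough version drops the term $|B^{[2]}u|^2$ and uses only $S\ge\frac n{n-1}x$, but this loses too much: it fails to yield the constant $2/(n(n-2))$. So the last term must be kept. I will use
\[
|B v\wedge w+v\wedge Bw|^2=|Bv|^2-\ip{Bv}w^2+|Bw|^2-\ip{v}{Bw}^2+2\ip{Bv}v\ip{Bw}w ,
\]
together with $\sum_{w}\ip{Bw}w=-\ip{Bv}v$ and $\sum_w\ip{Bv}w^2=|Bv|^2-\ip{Bv}v^2$. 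This produces an averaged inequality involving $x$, $S$ and $\sum_\alpha\ip{B_\alpha v}v^2$. The new terms are favourable, and they have to be combined with the refined Cauchy--Schwarz bound $S\ge|B v|^2+\frac{(\ip{Bv}v)^2}{n-1}+\dots$, obtained from the splitting $V=\R v\oplus v^\perp$, so as to reach $x\le\frac{2(1-\eta)H^2}{n(n-2)}$ for $n\ge10$. I expect this final optimisation, balancing $\ip{B_\alpha v}v$ against $|B_\alpha v|^2$ and $S$, to be the delicate step.
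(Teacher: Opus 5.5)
Your overall route is the paper's: $\lambda_1^2\le\frac{n-1}{n}|B|^2$ for the first pair of bounds, the diagonal computation on $e_i\wedge e_j$ for the second pair, and summing $\Phi^{2,\eta}_{v\wedge w}(A)\ge0$ over an orthonormal basis of $v^\perp$ for the last bound. The two-sided bounds, the $i=2$ case of \eqref{eq:coarse}, and the $i=1$ case of \eqref{eq:shape-square-bound} are correct. Two steps, however, are not.

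\textbf{The $i=1$ case of \eqref{eq:coarse} is false as written.} You claim $\frac1{n-1}+\frac1n\le\frac2n$, but in fact $\frac1{n-1}+\frac1n=\frac{2n-1}{n(n-1)}>\frac2n$. So the crude bound $S\le-\Phi_u^{1,\eta}(A^\circ)$ is not enough when $\eta<1/n$, in particular when $\eta=0$. You must use the term $n\sum_\alpha|B_\alpha u|^2$. The paper's fix uses an inequality you already prove: take the trace of the $i=1$ case of \eqref{eq:shape-square-bound}. This gives $S\le n\cdot(1-\eta)H^2/n^2=(1-\eta)H^2/n$, hence $|A|^2\le(2-\eta)H^2/n$. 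Alternatively, summing $\Phi^{1,\eta}_{e_k}(A)\ge0$ over an orthonormal basis gives $2nS\le\frac{n(1-\eta)}{n-1}H^2$, which also suffices.

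\textbf{The $i=2$ case of \eqref{eq:shape-square-bound} is not proved.} You stop at a plan and predict a delicate optimisation involving $\sum_\alpha\ip{B_\alpha v}v^2$, but no such term survives. Put $e_1=v$ and $w=e_j$ in your formula and sum over $j>1$. The contributions $-2\sum_{j>1}\ip{Bv}{e_j}^2=-2|Bv|^2+2\ip{Bv}v^2$ and $2\ip{Bv}v\sum_{j>1}\ip{Be_j}{e_j}=-2\ip{Bv}v^2$ cancel in their $\ip{Bv}v^2$ parts. Hence
\[
\sum_{j>1}\ip{(B^2)^{[2]}(v\wedge e_j)}{v\wedge e_j}=|B|^2+(n-2)|Bv|^2,\qquad \sum_{j>1}|B^{[2]}(v\wedge e_j)|^2=|B|^2+(n-4)|Bv|^2 .
\]
Write $x=\sum_\alpha|B_\alpha v|^2$. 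Summing $\Phi^{2,\eta}_{v\wedge e_j}(A)\ge0$ over $j>1$ gives
\[
(5n-4)S+n(n-4)x\le\frac{2(n+1)(1-\eta)}{n-2}H^2 .
\]
The plain bound $S\ge\frac n{n-1}x$ already suffices; no refined Cauchy--Schwarz splitting is needed. It makes the left side at least $\frac{n^3}{n-1}x\ge n(n+1)x$, which yields $x\le\frac{2(1-\eta)}{n(n-2)}H^2$ for every unit $v$. This is exactly the paper's argument.
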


\begin{proof}
For every unit vector $u$, the identity $\tr A_\alpha^\circ=0$ and Cauchy--Schwarz give $|A_\alpha^\circ u|^2\le\frac{n-1}{n}|A_\alpha^\circ|^2$. By \eqref{eq:phi1},
\[
 |A^\circ|^2\le-\Phi_u^{1,\eta}(A^\circ) =|A^\circ|^2+n\sum_\alpha|A_\alpha^\circ u|^2 \le|A^\circ|^2+(n-1)|A^\circ|^2=n|A^\circ|^2.
\]
For the second pair of inequalities, fix $\alpha$ and choose an orthonormal eigenbasis $e_1,\ldots,e_n$ of $A_\alpha^\circ$, with eigenvalues $\lambda_1,\ldots,\lambda_n$. Write $u=\sum_{j<k}u_{jk}e_j\wedge e_k$, where $\sum_{j<k}u_{jk}^2=1$. Since $\sum_j\lambda_j=0$, we have $(\lambda_j+\lambda_k)^2\le(n-2)\sum_{l\ne j,k}\lambda_l^2$ by Cauchy--Schwarz. Thus
\begin{align*}
 (n-4)\ip{((A_\alpha^\circ)^2)^{[2]}u}u+2|(A_\alpha^\circ)^{[2]}u|^2 &=\sum_{j<k}\bigl[(n-4)(\lambda_j^2+\lambda_k^2)+2(\lambda_j+\lambda_k)^2\bigr]u_{jk}^2\\
 &\le \sum_{j<k}\bigl[(n-2)(\lambda_j^2+\lambda_k^2)+(\lambda_j+\lambda_k)^2\bigr]u_{jk}^2\\
 &\le(n-2)|A_\alpha^\circ|^2.
\end{align*}
Summing over $\alpha$ and substituting in \eqref{eq:phi2}, we obtain
\[
 \frac{2(2n-1)}{n-1}|A^\circ|^2 \le-\Phi_u^{2,\eta}(A^\circ) \le\left(\frac{2(2n-1)}{n-1}+n-2\right)|A^\circ|^2 =\frac{n(n+1)}{n-1}|A^\circ|^2.
\]
Let $A\in\cC_1(\eta)$, and let $v\in V$ be a unit vector. Since $\Phi_v^{1,\eta}(A)\ge0$ and $\frac n{n-1}\sum_\alpha|A_\alpha^\circ v|^2\le|A^\circ|^2$, we obtain
\[
 \frac{n^2}{n-1}\sum_\alpha|A_\alpha^\circ v|^2 \le |A^\circ|^2+n\sum_\alpha|A_\alpha^\circ v|^2 \le\frac{1-\eta}{n-1}H^2.
\]
This proves \eqref{eq:shape-square-bound} for $i=1$. If $A\in\cC_2(\eta)$, let $v\in V$ be a unit vector and choose an orthonormal basis $e_1,\ldots,e_n$ with $e_1=v$. The identity $\tr A_\alpha^\circ=0$ gives
\begin{align*}
 \sum_{j>1}\ip{((A_\alpha^\circ)^2)^{[2]}(e_1\wedge e_j)} {e_1\wedge e_j} &=|A_\alpha^\circ|^2+(n-2)|A_\alpha^\circ e_1|^2,\\
 \sum_{j>1}|(A_\alpha^\circ)^{[2]}(e_1\wedge e_j)|^2 &=|A_\alpha^\circ|^2+(n-4)|A_\alpha^\circ e_1|^2.
\end{align*}
Summing $\Phi_{e_1\wedge e_j}^{2,\eta}(A)\ge0$ over $j>1$ gives
\begin{align*}
 \frac{n^3}{n-1}\sum_\alpha|A_\alpha^\circ v|^2 &\le(5n-4)|A^\circ|^2+n(n-4)\sum_\alpha|A_\alpha^\circ v|^2\\
 &=\frac{2(n+1)(1-\eta)}{n-2}H^2 -\sum_{j>1}\Phi_{e_1\wedge e_j}^{2,\eta}(A)\\
 &\le\frac{2(n+1)(1-\eta)}{n-2}H^2.
\end{align*}
Since $n^3/(n-1)\ge n(n+1)$, \eqref{eq:shape-square-bound} follows for $i=2$ as well.

For $A\in\cC_1(\eta)$, taking the trace of \eqref{eq:shape-square-bound} gives $|A^\circ|^2\le(1-\eta)H^2/n$. For $A\in\cC_2(\eta)$, \eqref{eq:phi2} gives
\[
 |A^\circ|^2\le\frac{(n+1)(1-\eta)}{(n-2)(2n-1)}H^2 \le\frac{1-\eta}{n}H^2.
\]
Hence $|A|^2=|A^\circ|^2+H^2/n\le2H^2/n$ in both cases, proving \eqref{eq:coarse}.
\end{proof}

For $Z\in\Sym^3(V)\otimes N$, write
\[
Z_j=Z(e_j,\cdot,\cdot), \quad |\tr Z|^2=\sum_j|\tr Z_j|^2.
\]

\begin{lemma}\label{lem:codazzi}
There is a constant $\chi_n>0$ such that, for every $A\in\Sym^2(V)\otimes N$, every $Z\in\Sym^3(V)\otimes N$, and every unit $u$ in the respective space,
\[
 -\sum_jD^2\Phi_u^{i,\eta}(A)[Z_j,Z_j]\ge\chi_n|Z|^2, \qquad i=1,2.
\]
\end{lemma}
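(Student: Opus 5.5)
Since $D^2\Phi_u^{i,\eta}(A)$ does not depend on $A$ (the functions are quadratic), the statement is purely about a fixed quadratic form $q_u(Z)=-\sum_j D^2\Phi_u^{i,\eta}[Z_j,Z_j]$ on $\Sym^3(V)\otimes N$. By \eqref{eq:phi-quadratic-identities}, $q_u(Z)=-2\sum_j\Phi_u^{i,\eta}(Z_j)$. I would split each $Z_j=Z_j^\circ+\frac1n\Id\otimes\tr Z_j$, so that
\[
 q_u(Z)=2\sum_j\bigl(-\Phi_u^{i,\eta}(Z_j^\circ)\bigr)-2c_i(1-\eta)|\tr Z|^2,
\]
with $c_1=\frac1{n-1}$ and $c_2=\frac{2(n+1)}{(n-1)(n-2)}$. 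Lemma~\ref{lem:scalar} gives $-\Phi_u^{i,\eta}(Z_j^\circ)\ge a_i|Z_j^\circ|^2$, where $a_1=1$ and $a_2=\frac{2(2n-1)}{n-1}$. Hence
\[
 q_u(Z)\ge2a_i|Z^\circ|^2-2c_i|\tr Z|^2,
\]
with $|Z^\circ|^2=\sum_j|Z_j^\circ|^2$. This bound is uniform in $u$ and $\eta$, using $\eta\ge0$.

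The key step is a Kato-type inequality for totally symmetric tensors, which controls the trace by the traceless part. For $Z\in\Sym^3(V)\otimes N$, the standard decomposition into a traceless symmetric part plus the symmetrized trace part gives, componentwise in $N$,
\[
 |Z|^2\ge\frac{3}{n+2}|\tr Z|^2 .
\]
Since $|Z|^2=|Z^\circ|^2+\frac1n|\tr Z|^2$, this becomes
\[
 |Z^\circ|^2\ge\Bigl(\frac3{n+2}-\frac1n\Bigr)|\tr Z|^2=\frac{2(n-1)}{n(n+2)}|\tr Z|^2 .
\]
I would prove this by writing $Z=Z'+\operatorname{Sym}(g\otimes w)$ with $Z'$ totally traceless, computing the trace of the second term as $\frac{n+2}{3}w$, and computing its norm. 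This is the argument from Huisken and Andrews--Baker.

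It then remains to check that $2a_i\cdot\frac{2(n-1)}{n(n+2)}>2c_i$ with room to spare. Writing $\theta_i>0$ for the slack, one gets $q_u\ge\theta_i|Z^\circ|^2$, and since $|Z|^2\le C_n|Z^\circ|^2$ by the same inequality, $q_u\ge\chi_n|Z|^2$.

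For $i=1$ the check is $\frac{2(n-1)}{n(n+2)}>\frac1{n-1}$, that is, $2(n-1)^2>n^2+2n$. This holds for $n\ge10$.

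For $i=2$ the check is
\[
 \frac{2(2n-1)}{n-1}\cdot\frac{2(n-1)}{n(n+2)}=\frac{4(2n-1)}{n(n+2)}>\frac{2(n+1)}{(n-1)(n-2)} .
\]
After clearing denominators this compares roughly $8n^3$ against $2n^3$, so it holds for $n\ge10$.

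The main obstacle is that the uniform lower bound $-\Phi_u(Z_j^\circ)\ge a_i|Z_j^\circ|^2$ discards the helpful $u$-dependent terms. If the numerics were tight, one would have to exploit the Codazzi symmetry more carefully. That would mean averaging over $j$ in the terms $\sum_j|Z_j^\circ u|^2$, again using total symmetry. With the estimates above, however, the crude bound suffices. I would double-check the constant $\frac3{n+2}$ and the $i=2$ arithmetic, since those are the only places the argument could fail.
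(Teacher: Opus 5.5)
Your proposal is correct and follows essentially the paper's route. In both arguments you discard the $u$-dependent terms of the Hessian, which is equivalent to using the lower bounds of Lemma~\ref{lem:scalar}. You then combine $\sum_j|Z_j^\circ|^2=|Z|^2-\frac1n|\tr Z|^2$ with the Andrews--Baker trace inequality $|\tr Z|^2\le\frac{n+2}{3}|Z|^2$.

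The differences are cosmetic. The paper works directly with $|Z|^2$ and obtains the explicit constant $\frac{n^2-6n+2}{3n(n-1)}$, instead of passing through $|Z^\circ|^2$. It also reduces $i=2$ to $i=1$ by factoring out $\frac{2(n+1)}{n-2}$, using $\frac{2(2n-1)}{n-1}\ge\frac{2(n+1)}{n-2}$, rather than checking that case separately.
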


\begin{proof}
The proof of \cite[Proposition~6, (30a)]{AB} gives
\begin{equation}\label{eq:codazzi-trace}
 |\tr Z|^2\le\frac{n+2}{3}|Z|^2,\qquad \sum_j|Z_j^\circ|^2=|Z|^2-\frac1n|\tr Z|^2.
\end{equation}
By \eqref{eq:phi1-hessian} and \eqref{eq:codazzi-trace},
\begin{align*}
 -\frac12\sum_jD^2\Phi_u^{1,\eta}(A)[Z_j,Z_j] &\ge\sum_j|Z_j^\circ|^2-\frac{1-\eta}{n-1}|\tr Z|^2\\
 &\ge|Z|^2-\left(\frac1n+\frac1{n-1}\right)|\tr Z|^2 \ge\frac{n^2-6n+2}{3n(n-1)}|Z|^2.
\end{align*}
For $i=2$, \eqref{eq:phi2-hessian} and $2(2n-1)/(n-1)\ge2(n+1)/(n-2)$ give
\begin{align*}
 -\frac12\sum_jD^2\Phi_u^{2,\eta}(A)[Z_j,Z_j] &\ge\frac{2(n+1)}{n-2} \left(\sum_j|Z_j^\circ|^2-\frac{1-\eta}{n-1}|\tr Z|^2\right)\\
 &\ge\frac{2(n+1)}{n-2}\frac{n^2-6n+2}{3n(n-1)}|Z|^2.
\end{align*}
The assertion follows since $n\ge10$.
\end{proof}

We next estimate $D\Phi_u^{i,\eta}(A)[\Q(A)]$. We begin with the case $H=0$.

\begin{lemma}\label{lem:reaction-zero-mean}
If $H=0$, then for every unit $u$,
\[
 D\Phi_u^{i,\eta}(A)[\Q(A)] \ge2(n-1)|A|^2\Phi_u^{i,\eta}(A), \qquad i=1,2.
\]
If $\Phi_u^{i,\eta}(A)=D\Phi_u^{i,\eta}(A)[\Q(A)]=0$, then $A=0$.
\end{lemma}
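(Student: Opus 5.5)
When $H=0$ we have $A=A^\circ$, so every component $A_\alpha$ is traceless. I would first remove the $H^2$ term by writing $B_u(Z,W)=-\frac12D^2\Phi_u^{i,\eta}(A)[Z,W]$. This bilinear form does not depend on $A$. By Lemma~\ref{lem:phi-derivatives}(iii), $B_u$ is an inner product on traceless tensors, $\Phi_u^{i,\eta}(A)=-B_u(A,A)$, and $D\Phi_u^{i,\eta}(A)[Z]=-2B_u(A,Z)$ for traceless $Z$.

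The trace of $\Q(A)_\alpha$ is $\sum_\beta\tr(A_\alpha A_\beta)\tr A_\beta$ (the commutator terms are traceless), and this vanishes. So $\Q(A)$ is traceless and the desired inequality is equivalent to
\[
 B_u(A,\Q(A))\le(n-1)|A|^2B_u(A,A).
\]

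The rigidity statement is immediate from this setup. If $\Phi_u^{i,\eta}(A)=0$ with $H=0$, then $B_u(A,A)=0$, and positive definiteness on traceless tensors gives $A=0$. The hypothesis on $D\Phi[\Q]$ is not even needed.

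For the inequality, I would split $\Q(A)_\alpha=\sum_\beta\tr(A_\alpha A_\beta)A_\beta-\sum_\beta[A_\beta,[A_\beta,A_\alpha]]$. This uses $A_\beta^2A_\alpha+A_\alpha A_\beta^2-2A_\beta A_\alpha A_\beta=[A_\beta,[A_\beta,A_\alpha]]$; I would double-check the sign against \eqref{eq:reaction} and adjust the argument accordingly.

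\emph{First term.} It contributes $\sum_{\alpha,\beta}M_{\alpha\beta}G_{\alpha\beta}$, where $M_{\alpha\beta}=\tr(A_\alpha A_\beta)$ and $G_{\alpha\beta}=B_u(A_\alpha,A_\beta)$ are both positive semidefinite Gram matrices. Hence the contribution is at most $\lambda_{\max}(M)\tr G\le|A|^2B_u(A,A)$.

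\emph{Commutator term.} It contributes $\sum_{\alpha,\beta}B_u(A_\alpha,\mathcal L_\beta A_\alpha)$ with $\mathcal L_\beta X=-[A_\beta,[A_\beta,X]]$. I need this to be at most $(n-2)|A|^2B_u(A,A)$.

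To get this I would bound the operator norm of $\mathcal L_\beta$ with respect to $B_u$. The norm $B_u$ is built from three pieces, $\tr(XY)$, $\ip{Xu}{Yu}$ (or $\sum_j\ip{X\iota_{e_j}u}{Y\iota_{e_j}u}$), and $\ip{X^{[2]}u}{Y^{[2]}u}$. Each piece is the squared norm of a linear image of $X$, and each image intertwines with conjugation: $(UXU^{-1})^{[2]}=U^{[2]}X^{[2]}(U^{[2]})^{-1}$, so $\mathrm{ad}_{A_\beta}$ passes to $\mathrm{ad}_{A_\beta^{[2]}}$ on $\Lambda^2V$. I expect each piece to be controlled by a bound of the form $\mathrm{const}\cdot|A_\beta|_{\op}^2$, combined with the traceless estimate $|A_\beta|_{\op}^2\le\frac{n-1}n|A_\beta|^2$. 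A crude version of this gives the inequality with $4|A_\beta|_{\op}^2$ in place of $(n-2)|A_\beta|^2$, which suffices because $4\frac{n-1}{n}\le n-2$ for $n\ge10$. Summing over $\beta$ would then yield the required $(n-2)|A|^2B_u(A,A)$, and the generous constant $2(n-1)$ is what makes this crude route viable.

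The main obstacle is making the operator bound for $\mathcal L_\beta$ precise on the evaluation pieces. The quantities $u$ and $\iota_{e_j}u$ are not invariant under conjugation, so $\ip{Xu}{\mathcal L_\beta Xu}$ is not simply $-|[A_\beta,X]u|^2$. My first attempt is to expand $\ip{Xu}{[A_\beta,[A_\beta,X]]u}$ directly and apply Cauchy--Schwarz with $|A_\beta|_{\op}$, absorbing the cross terms into $|X|^2$ via the trace part of $B_u$. If the constants are too tight, I would instead diagonalize a single $A_\beta$ and compute in its eigenbasis, reducing to scalar inequalities on eigenvalue differences, as in the proof of Lemma~\ref{lem:scalar}.
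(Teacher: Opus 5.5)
Your reduction is correct. For $H=0$ the claim is equivalent to $B_u(A,\Q(A))\le(n-1)|A|^2B_u(A,A)$, and your Gram-matrix bound $\tr(MG)\le\tr M\,\tr G=|A|^2B_u(A,A)$ for the first part of $\Q$ is right. The rigidity statement is exactly the paper's one-line argument via Lemma~\ref{lem:scalar}. On the sign: by the identity you quote, the double commutator enters $\Q(A)_\alpha$ with a plus sign, so $\mathcal L_\beta=\mathrm{ad}_{A_\beta}^2$.

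The gap is the commutator estimate. It carries all the content of the lemma, and you leave it open. Moreover, the dimension-free bound you anticipate, $\|\mathcal L_\beta\|_{B_u}\le4|A_\beta|_{\op}^2$, is false. Take $n=10$, $i=1$, $u=(e_1+e_2)/\sqrt2$ and $A_\beta=\mathrm{diag}(1,-1,0,\ldots,0)$. Let $X$ be traceless with $X_{11}=X_{22}=-\frac12$, $X_{12}=X_{21}=1$, $X_{kk}=\frac18$ for $k\ge3$, and all other entries zero. Then $\mathcal L_\beta X=4(e_1e_2^T+e_2e_1^T)$ and $Xu=\frac1{2\sqrt2}(e_1+e_2)$. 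Hence $B_u(X,\mathcal L_\beta X)=8+10\cdot2=28$, while $B_u(X,X)=\frac{21}8+\frac{20}8=\frac{41}8$. The ratio $\frac{224}{41}$ exceeds $4=4|A_\beta|_{\op}^2$.

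Taking $X_{11}=X_{22}=-1+n^{-1/2}$ instead, with compensating diagonal entries, makes the ratio grow like $\sqrt n$. The evaluation term $n\ip{Xu}{(\mathcal L_\beta X)u}$ becomes large when $|Xu|\sim n^{-1/2}|X|$ and $(\mathcal L_\beta X)u$ is parallel to $Xu$. This is exactly the loss you would meet when absorbing cross terms into the trace part of $B_u$, so no dimension-free constant of the kind you expect exists.

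The missing idea is that no sharp bound is needed. By Lemma~\ref{lem:scalar}, on traceless tensors $B_u$ is comparable to the Frobenius norm, with condition number at most $n$ for both $i=1,2$. Let $\mu_j$ be the eigenvalues of $A_\beta$. Then $\|\mathcal L_\beta\|_{B_u}\le\sqrt n\max_{j,k}(\mu_j-\mu_k)^2\le2\sqrt n\,|A_\beta|^2$. So the commutator part is at most $2\sqrt n\,|A|^2B_u(A,A)\le(n-2)|A|^2B_u(A,A)$ for $n\ge10$, and your argument closes.

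The paper does the same thing without splitting $\Q$. Working in an eigenbasis of each $A_\beta$, it proves $|\Q(A)|\le2|A|^3$, so $B_u(\Q(A),\Q(A))\le4n|A|^4B_u(A,A)$ in both cases. A single Cauchy--Schwarz in $B_u$ then gives $B_u(A,\Q(A))\le2\sqrt n\,|A|^2B_u(A,A)$, which suffices since $4\sqrt n\le2(n-1)$. Your exact treatment of the Gram term is valid but unnecessary, because the constant $2(n-1)$ leaves room for the $\sqrt n$ loss.
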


\begin{proof}
Fix $\beta$ and choose an orthonormal eigenbasis of $A_\beta$, with eigenvalues $\lambda_1,\ldots,\lambda_n$. Cauchy--Schwarz and $(\lambda_j-\lambda_k)^2\le2|A_\beta|^2$ give
\begin{align*}
 \left|\tr(A_\alpha A_\beta)A_\beta +A_\beta^2A_\alpha+A_\alpha A_\beta^2 -2A_\beta A_\alpha A_\beta\right|^2 &=|A_\beta|^2\left(\sum_j\lambda_j(A_\alpha)_{jj}\right)^2 +\sum_{j\ne k}(\lambda_j-\lambda_k)^4(A_\alpha)_{jk}^2\\
 &\le |A_\beta|^4\left(\sum_j(A_\alpha)_{jj}^2 +4\sum_{j\ne k}(A_\alpha)_{jk}^2\right)\\
 &\le4|A_\beta|^4|A_\alpha|^2.
\end{align*}
Using \eqref{eq:reaction} and the triangle inequality, we obtain
\[
 |\Q(A)|^2=\sum_\alpha|\Q(A)_\alpha|^2 \le\sum_\alpha\left(2\sum_\beta|A_\beta|^2|A_\alpha|\right)^2 =4|A|^6.
\]
Since $\tr A=\tr\Q(A)=0$, Lemma~\ref{lem:scalar} gives
\begin{align*}
 -\Phi_u^{1,\eta}(\Q(A)) &\le4n|A|^6 \le4n|A|^4[-\Phi_u^{1,\eta}(A)],\\
 -\Phi_u^{2,\eta}(\Q(A)) &\le\frac{4n(n+1)}{n-1}|A|^6 \le4n|A|^4[-\Phi_u^{2,\eta}(A)].
\end{align*}
Since $\Phi_u^{i,\eta}(A)\le0$ and $4\sqrt n\le2(n-1)$, Cauchy--Schwarz in the inner product of Lemma~\ref{lem:phi-derivatives}(iii) gives
\begin{align*}
 D\Phi_u^{i,\eta}(A)[\Q(A)] &=D^2\Phi_u^{i,\eta}(A)[A,\Q(A)]\\
 &\ge-\sqrt{[-D^2\Phi_u^{i,\eta}(A)[A,A]] [-D^2\Phi_u^{i,\eta}(A)[\Q(A),\Q(A)]]}\\
 &=-2\sqrt{[-\Phi_u^{i,\eta}(A)][-\Phi_u^{i,\eta}(\Q(A))]}\\
 &\ge4\sqrt n\,|A|^2\Phi_u^{i,\eta}(A) \ge2(n-1)|A|^2\Phi_u^{i,\eta}(A).
\end{align*}
If $\Phi_u^{i,\eta}(A)=0$, Lemma~\ref{lem:scalar} gives $|A|^2\le-\Phi_u^{i,\eta}(A)=0$.
\end{proof}

\subsection{The cone \texorpdfstring{$\cC_1$}{C1}}
Assume $H>0$. We use the decomposition
\[
 A=\frac{A^{\bH}}{H^2}\otimes\bH+\widehat A,
 \qquad
 |A|^2=\frac{|A^{\bH}|^2}{H^2}+|\widehat A|^2,
 \qquad
 A^{\bH} = \langle A, \bH\rangle_N.
\]
The tensor $\widehat A$ is traceless and takes values in $\bH^\perp$.
Fix an orthonormal normal frame $\nu_1=\bH/H,\nu_2,\ldots,\nu_q$.
Then $A_1=A^{\bH}/H$, $A_1^\circ=(A^{\bH})^\circ/H$, and
$A_\alpha=A_\alpha^\circ=\widehat A_\alpha$ for $\alpha>1$. Thus
\[
 A^\circ=\frac{(A^{\bH})^\circ}{H^2}\otimes\bH+\widehat A,
 \qquad |A^\circ|^2=\frac{|(A^{\bH})^\circ|^2}{H^2}+|\widehat A|^2.
\]
Substituting $A_1=A_1^\circ+(H/n)\Id$ in \eqref{eq:reaction} gives
\begin{equation}\label{eq:mean-orthogonal-Q}
\begin{aligned}
 \tr\Q(A)=\frac{|A^{\bH}|^2}{H^2}\bH
       +\sum_{\alpha>1}\tr(\widehat A_\alpha A^{\bH})\nu_\alpha,
       \qquad
 \Q(A)^\circ=\Q(A^\circ)
             +\frac1n(A^{\bH})^\circ\otimes\bH.
\end{aligned}
\end{equation}

\begin{lemma}\label{lem:first-algebra}
Let $A\in\Sym^2(V)\otimes N$ with $H>0$, and let $u\in V$ be a unit vector such that $\Phi_u^{1,\eta}(A) = \min_{|v|=1}\Phi_v^{1,\eta}(A)$. Then
\[
 \frac12D\Phi_u^{1,0}(A^\circ)[\Q(A^\circ)] \ge\Phi_u^{1,0}(A^\circ) \left(\frac{|(A^{\bH})^\circ|^2}{H^2} -\frac{n-1}{n}\Phi_u^{1,0}(\widehat A)\right).
\]
Equality holds only if $\widehat A=0$.
\end{lemma}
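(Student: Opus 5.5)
The plan is a direct algebraic computation: write $A^\circ$ as a mean-curvature part plus an orthogonal part, expand the cubic reaction term, and bound each group of terms using the extremality of $u$. Since $\tr A^\circ=0$ and $\tr\Q(A^\circ)=\sum_{\alpha,\beta}\tr(A^\circ_\alpha A^\circ_\beta)\tr A^\circ_\beta=0$, all trace terms in \eqref{eq:phi1-derivative} drop out when it is evaluated at $A^\circ$. Writing $B=(A^{\bH})^\circ/H$, so that $A^\circ=B\otimes\nu_1+\widehat A$ with $\widehat A$ valued in $\nu_1^\perp$, we get
\[
 -\tfrac12 D\Phi_u^{1,0}(A^\circ)[\Q(A^\circ)]=\sum_\alpha\tr\bigl(A^\circ_\alpha\Q(A^\circ)_\alpha\bigr)+n\sum_\alpha\ip{A^\circ_\alpha u}{\Q(A^\circ)_\alpha u}.
\]
For the first sum we use the identity of Andrews--Baker,
\[
 \sum_\alpha\tr(A^\circ_\alpha\Q_\alpha)=\sum_{\alpha,\beta}\tr(A^\circ_\alpha A^\circ_\beta)^2+\sum_{\alpha,\beta}|[A^\circ_\alpha,A^\circ_\beta]|^2 .
\]
Since $-\Phi_u^{1,0}(A^\circ)\ge0$ and $|B|^2=|(A^{\bH})^\circ|^2/H^2$, the claim becomes
\[
 \sum_\alpha\tr(A^\circ_\alpha\Q_\alpha)+n\sum_\alpha\ip{A^\circ_\alpha u}{\Q_\alpha u}\le\bigl[-\Phi_u^{1,0}(A^\circ)\bigr]\Bigl(|B|^2+\tfrac{n-1}n\bigl[-\Phi_u^{1,0}(\widehat A)\bigr]\Bigr).
\]
The right-hand side splits according to $-\Phi_u^{1,0}(A^\circ)=-\Phi_u^{1,0}(B\otimes\nu_1)-\Phi_u^{1,0}(\widehat A)$, because the normal components are orthogonal.

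I would then sort the left-hand side by degree in $(B,\widehat A)$.

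\textbf{Pure $B$ terms.} Since $\Q(B\otimes\nu_1)=|B|^2B\otimes\nu_1$, these equal exactly $|B|^2[-\Phi_u^{1,0}(B\otimes\nu_1)]$.

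\textbf{Mixed terms.} These are the terms of type $B^2\widehat A^2$ in the trace part and in the $u$-part, including the commutators $|[B,\widehat A_\alpha]|^2$. They must be dominated by $|B|^2[-\Phi(\widehat A)]$ plus $\frac{n-1}n[-\Phi(B\otimes\nu_1)][-\Phi(\widehat A)]$.

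\textbf{Pure $\widehat A$ terms.} These must be controlled by $\frac{n-1}n\Phi(\widehat A)^2$. This is the $H=0$ type estimate. Lemma~\ref{lem:reaction-zero-mean} is too crude here, because it loses the constant $4\sqrt n$, so I would redo it. Choose an eigenbasis diagonalizing one $\widehat A_\alpha$, and bound $\sum_{\alpha,\beta}\tr(\widehat A_\alpha\widehat A_\beta)^2+|[\widehat A_\alpha,\widehat A_\beta]|^2\le\frac32|\widehat A|^4$ as in Andrews--Baker. For the $u$-part, bound $|\Q(\widehat A)_\alpha u|$ by Cauchy--Schwarz in the inner product of Lemma~\ref{lem:phi-derivatives}(iii).

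The minimality hypothesis on $u$ enters as follows. Since $\Phi_u^{1,\eta}(A)$ is minimal over unit $v$, and $\Phi_v$ depends on $v$ only through $\sum_\alpha|A^\circ_\alpha v|^2$, the vector $u$ is an eigenvector of $S=\sum_\alpha(A^\circ_\alpha)^2$ for its largest eigenvalue. Hence every term $\sum_\alpha|X_\alpha u|^2$ with $X$ built from $A^\circ$ can be compared with $\ip{Su}{u}$. In particular, $\ip{Su}{u}\ge\frac1n|A^\circ|^2$ and $\ip{Su}{u}\ge|Bu|^2,\ \sum_\alpha|\widehat A_\alpha u|^2$. These comparisons let the factor $n\sum_\alpha\ip{A^\circ_\alpha u}{\Q_\alpha u}$ be absorbed into $-\Phi_u(A^\circ)$.

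I expect the main obstacle to be the mixed terms. Specifically, $n\ip{Bu}{(\widehat A_\alpha^2B+B\widehat A_\alpha^2-2\widehat A_\alpha B\widehat A_\alpha)u}$ and the corresponding $\widehat A$-component $n\ip{\widehat A_\alpha u}{(B^2\widehat A_\alpha+\widehat A_\alpha B^2-2B\widehat A_\alpha B)u}$ are not sign-definite. My first attempt would be to complete squares pointwise in an eigenbasis of $B$. I would bound $(b_j-b_k)^2\le 2|B|^2$ and use $|Bu|^2\le\frac{n-1}n|B|^2$, and then check that the coefficient $\frac{n-1}n$ in the statement is exactly what this produces. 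If that fails, I would use the eigenvector property of $u$ with respect to $S$ to kill the off-diagonal cross terms.

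For the equality case, I would trace through where strict inequalities occur. If $\widehat A\ne0$, then the Cauchy--Schwarz step comparing $\Q(\widehat A)$ with $\widehat A$ is strict, since $\widehat A$ cannot be a multiple of $\Q(\widehat A)$ with the sharp constant; alternatively, the term $\frac{n-1}n\Phi(\widehat A)^2>0$ is not fully used. Either way, equality forces $\widehat A=0$.
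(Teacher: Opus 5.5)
\textbf{There is a genuine gap: the three degree groups cannot be estimated separately.} Your setup is sound. You correctly get $\tr\Q(A^\circ)=0$, the left-hand side is even in $\widehat A$ (so only the $(4,0)$, $(2,2)$, $(0,4)$ groups in $(B,\widehat A)$ occur), and the pure-$B$ group equals $|B|^2[-\Phi_u^{1,0}(B\otimes\nu_1)]$ exactly. The problem is that the hypothesis on $u$ concerns $S=B^2+\sum_{\alpha>1}\widehat A_\alpha^2$, which couples $B$ with $\widehat A$.

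In particular, the pure-$\widehat A$ estimate you need is false under the lemma's hypotheses. Take $B=\operatorname{diag}(b,-\frac{b}{n-1},\dots,-\frac{b}{n-1})$, $u=e_1$, and $\widehat A=\epsilon\operatorname{diag}(0,1,-1,0,\dots,0)\otimes\nu_2$ with $0<\epsilon\ll b$.
\begin{itemize}
\item Then $u$ is a top eigenvector of $S$ and $\widehat A_2u=0$.
\item The pure-$\widehat A$ part of the left-hand side is $|\widehat A|^4=4\epsilon^4$.
\item The required bound is $\frac{n-1}{n}\Phi_u^{1,0}(\widehat A)^2=\frac{n-1}{n}\,4\epsilon^4$, which is smaller.
\end{itemize}
The lemma survives here only because the mixed group has slack of order $b^2\epsilon^2$. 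Nothing bounds $\sum_\alpha|\widehat A_\alpha u|^2$ from below by $|\widehat A|^2$, so the pure-$\widehat A$ budget can be as small as $\frac{n-1}{n}|\widehat A|^4$. That is below even the codimension-one value $|\widehat A|^4$ of the trace part, let alone your Andrews--Baker bound $\frac32|\widehat A|^4$. Slack therefore has to be moved between groups using inequalities that are not homogeneous in the $(B,\widehat A)$ grading.

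\textbf{Missing idea: controlling the $n$-weighted term $n\sum_\alpha\ip{A^\circ_\alpha u}{\Q(A^\circ)_\alpha u}$.} Write $\sigma=\sum_\alpha|A^\circ_\alpha u|^2$. Two of your listed comparisons, $\sigma\ge|Bu|^2$ and $\sigma\ge\sum_\alpha|\widehat A_\alpha u|^2$, hold for every $u$. What the paper's argument actually exploits is the full maximality $S\le\sigma\Id$, together with $Su=\sigma u$.
\begin{itemize}
\item Normalize $\sigma=1$. Using $Su=u$, the non-trace part of the $u$-term equals $1+3\sum_{\alpha,\beta}|A^\circ_\beta A^\circ_\alpha u|^2-\sum_{\alpha,\beta}|(A^\circ_\alpha A^\circ_\beta+A^\circ_\beta A^\circ_\alpha)u|^2$.
\item By $S\le\Id$, $\sum_{\alpha,\beta}|A^\circ_\beta A^\circ_\alpha u|^2=\sum_\alpha\ip{SA^\circ_\alpha u}{A^\circ_\alpha u}\le1$. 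This bounds the expression by $4-4|(A^\circ_1)^2u|^2\le8|\widehat A(u,\cdot)|^2$.
\item In the paper's bookkeeping the $u$-terms cost $9n|\widehat A(u,\cdot)|^2$. This is paid for by the cross term $n(n-1)|A^\circ_1u|^2|\widehat A(u,\cdot)|^2$ in the right-hand side, leaving coefficient $n(n-10)$.
\end{itemize}
This is exactly where $n\ge10$ enters, and your plan never uses it. The paper works throughout in this inhomogeneous normalization ($S\le\Id$, $Su=u$, $|A^\circ|^2\le n$). It obtains a single inequality with the explicit nonnegative remainder
$(1-|A^\circ|^2/n)|\widehat A|^2+(3|A^\circ|^2+n(n-10))|\widehat A(u,\cdot)|^2+4\sum_{\alpha>1}|(A^\circ_\alpha)^2|^2$.

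\textbf{Equality case.} The remainder above also settles it, since its last term vanishes only if $\widehat A=0$. Your equality argument is unsupported: you do not identify any strict Cauchy--Schwarz step. The example above also shows that the pure-$\widehat A$ budget can be overdrawn, not left unused.
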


\begin{proof}
It suffices to consider $A^\circ\ne0$. By homogeneity, we may assume $\sum_\alpha|A_\alpha^\circ u|^2=1$. The choice of $u$ then gives
\[
 \sum_\alpha(A_\alpha^\circ)^2\le\Id,\qquad \sum_\alpha(A_\alpha^\circ)^2u=u,\qquad |A^\circ|^2\le n.
\]
In an orthonormal eigenbasis of $A_1^\circ$, Cauchy--Schwarz gives, for every $\alpha>1$,
\begin{align*}
 |A_1^\circ A_\alpha^\circ-A_\alpha^\circ A_1^\circ|^2 &\le2|A_1^\circ|^2|A_\alpha^\circ|^2 -2\tr(A_1^\circ A_\alpha^\circ)^2,\\
 \tr(A_1^\circ A_\alpha^\circ)^2 &\le\frac n4|A_1^\circ A_\alpha^\circ+A_\alpha^\circ A_1^\circ|^2.
\end{align*}
Expanding the squared norms, applying Cauchy--Schwarz for $\alpha,\beta>1$, and retaining $\beta=1$ and $\beta=\alpha$ in the subtracted sum, we obtain
\begin{align*}
 &\sum_{\alpha,\beta}\tr(A_\alpha^\circ A_\beta^\circ)^2 +\sum_{\alpha,\beta} |A_\alpha^\circ A_\beta^\circ-A_\beta^\circ A_\alpha^\circ|^2\\
 &\le |A^\circ|^4 +4\sum_{\alpha>1}\tr\left((A_\alpha^\circ)^2 \sum_\beta(A_\beta^\circ)^2\right) -\sum_{\alpha>1,\beta} |A_\alpha^\circ A_\beta^\circ+A_\beta^\circ A_\alpha^\circ|^2\\
 &\le |A^\circ|^4+4|\widehat A|^2 -\frac4n\sum_{\alpha>1}\tr(A_1^\circ A_\alpha^\circ)^2 -4\sum_{\alpha>1}|(A_\alpha^\circ)^2|^2.
\end{align*}
For $\alpha>1$, Young's inequality gives
\begin{align*}
 &2n\tr(A_1^\circ A_\alpha^\circ) \ip{A_1^\circ u}{A_\alpha^\circ u} -\frac4n\tr(A_1^\circ A_\alpha^\circ)^2\\
 &\le2(n-2)|A_1^\circ|\,|A_\alpha^\circ| |A_1^\circ u|\,|A_\alpha^\circ u| +n|A_1^\circ u|^2|A_\alpha^\circ u|^2\\
 &\le2(n-2)|A_1^\circ|^2|A_\alpha^\circ u|^2 +\frac{n-2}{2}|A_\alpha^\circ|^2|A_1^\circ u|^2 +n|A_1^\circ u|^2|A_\alpha^\circ u|^2.
\end{align*}
Since $\sum_\beta(A_\beta^\circ)^2\le\Id$ and $\sum_\alpha|A_\alpha^\circ u|^2=1$, we have $\sum_{\alpha,\beta}|A_\beta^\circ A_\alpha^\circ u|^2\le1$. Hence
\begin{align*}
 \sum_{\alpha,\beta}\left\langle A_\alpha^\circ u, \bigl((A_\beta^\circ)^2A_\alpha^\circ +A_\alpha^\circ(A_\beta^\circ)^2 -2A_\beta^\circ A_\alpha^\circ A_\beta^\circ\bigr)u \right\rangle &=1+3\sum_{\alpha,\beta}|A_\beta^\circ A_\alpha^\circ u|^2 -\sum_{\alpha,\beta} |(A_\alpha^\circ A_\beta^\circ+A_\beta^\circ A_\alpha^\circ)u|^2\\
 &\le4-4|(A_1^\circ)^2u|^2 \le4-4|A_1^\circ u|^4 \le8|\widehat A(u,\cdot)|^2.
\end{align*}
Substituting these inequalities in \eqref{eq:phi1-derivative} and using \eqref{eq:reaction}, Cauchy--Schwarz for $\alpha,\beta>1$, and $|A_1^\circ u|^2=1-|\widehat A(u,\cdot)|^2$, we obtain
\begin{align*}
 \frac12D\Phi_u^{1,0}(A^\circ)[\Q(A^\circ)] &=-\sum_\alpha\tr(A_\alpha^\circ\Q(A^\circ)_\alpha) -n\sum_\alpha\ip{A_\alpha^\circ u}{\Q(A^\circ)_\alpha u}\\
 &\ge-|A^\circ|^4-n|A_1^\circ|^2-\frac{n+6}{2}|\widehat A|^2 +4\sum_{\alpha>1}|(A_\alpha^\circ)^2|^2\\
 &\quad-\left((n-4)|A_1^\circ|^2+\frac{n+2}{2}|\widehat A|^2 +9n\right)|\widehat A(u,\cdot)|^2\\
 &\ge\Phi_u^{1,0}(A^\circ) \left(\frac{|(A^{\bH})^\circ|^2}{H^2} -\frac{n-1}{n}\Phi_u^{1,0}(\widehat A)\right)\\
 &\quad+\left(1-\frac{|A^\circ|^2}{n}\right)|\widehat A|^2 +\bigl(3|A^\circ|^2+n(n-10)\bigr)|\widehat A(u,\cdot)|^2 +4\sum_{\alpha>1}|(A_\alpha^\circ)^2|^2.
\end{align*}
Since $n\ge10$ and $|A^\circ|^2\le n$, the last three terms are nonnegative. If equality holds, then $\sum_{\alpha>1}|(A_\alpha^\circ)^2|^2=0$, and hence $\widehat A=0$.
\end{proof}

\begin{proposition}\label{prop:reaction-first}
Let $A\in\Sym^2(V)\otimes N$ with $H>0$, and let $u\in V$ be a unit vector such that $\Phi_u^{1,\eta}(A)=\min_{|v|=1}\Phi_v^{1,\eta}(A)$. Then
\[
 D\Phi_u^{1,\eta}(A)[\Q(A)] \ge2(n-1)|A|^2\min\{\Phi_u^{1,\eta}(A),0\}.
\]
If $\Phi_u^{1,\eta}(A)=D\Phi_u^{1,\eta}(A)[\Q(A)]=0$, then $\widehat A=0$.
\end{proposition}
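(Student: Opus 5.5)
Our plan is to decompose $D\Phi_u^{1,\eta}(A)[\Q(A)]$ into a pure-trace part, a traceless part governed by Lemma~\ref{lem:first-algebra}, and cross terms coming from \eqref{eq:mean-orthogonal-Q}. Then we show that the combination is bounded below by $2(n-1)|A|^2\min\{\Phi_u^{1,\eta}(A),0\}$.

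We start from \eqref{eq:phi1-derivative} and insert $\Q(A)^\circ=\Q(A^\circ)+\frac1n(A^{\bH})^\circ\otimes\bH$. In the frame $\nu_1=\bH/H$, the second summand only contributes through $A_1^\circ=(A^{\bH})^\circ/H$. This gives
\[
 \tfrac12D\Phi_u^{1,\eta}(A)[\Q(A)]=\tfrac{1-\eta}{n-1}|A^{\bH}|^2+\tfrac12D\Phi_u^{1,0}(A^\circ)[\Q(A^\circ)]-\tfrac{H^2}{n}\Bigl(\tfrac{|(A^{\bH})^\circ|^2}{H^2}+n\tfrac{|(A^{\bH})^\circ u|^2}{H^2}\Bigr).
\]
We then use the identities $|A^{\bH}|^2/H^2=|(A^{\bH})^\circ|^2/H^2+H^2/n$ and
\[
 \Phi_u^{1,\eta}(A)=\tfrac{1-\eta}{n-1}H^2+\Phi_u^{1,0}(A^\circ)-\tfrac{|(A^{\bH})^\circ|^2}{H^2}\cdot 0,
\]
that is, $\Phi_u^{1,\eta}(A)=\frac{1-\eta}{n-1}H^2-|A^\circ|^2-n\sum|A^\circ_\alpha u|^2$. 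The aim is to rewrite everything in terms of $\Phi:=\Phi_u^{1,\eta}(A)$, $|A|^2$, and the nonnegative quantities $|(A^{\bH})^\circ|^2/H^2$ and $|\widehat A|^2$.

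The key step applies Lemma~\ref{lem:first-algebra}, which is legitimate because $u$ minimizes $\Phi_v^{1,\eta}(A)$ and hence also minimizes $\Phi_v^{1,0}(A^\circ)$, these differing by a $v$-independent constant. Writing $\Phi_u^{1,0}(A^\circ)=\Phi-\frac{1-\eta}{n-1}H^2$, the lemma gives a lower bound with the factor $\Phi-\frac{1-\eta}{n-1}H^2$ multiplying $\frac{|(A^{\bH})^\circ|^2}{H^2}-\frac{n-1}{n}\Phi_u^{1,0}(\widehat A)$. The part $-\frac{1-\eta}{n-1}H^2\cdot\frac{|(A^{\bH})^\circ|^2}{H^2}$ should combine with the first and third terms above. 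The expected outcome is that the trace terms $\frac{1-\eta}{n-1}|A^{\bH}|^2-\frac1n|(A^{\bH})^\circ|^2-|(A^{\bH})^\circ u|^2$ reassemble, up to the factor $H^2/n$, into a multiple of $\Phi$. This is the Huisken/Andrews--Baker mechanism, in which $\frac{H^2}{n}\Phi$ plus a nonnegative remainder appears. For the part $-\frac{1-\eta}{n-1}H^2\cdot(-\frac{n-1}{n}\Phi_u^{1,0}(\widehat A))=-\frac{1-\eta}{n}H^2|\widehat A|^2$-type terms (note $-\Phi_u^{1,0}(\widehat A)\ge|\widehat A|^2$ by Lemma~\ref{lem:scalar}), we expect them to be absorbed. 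Here $\widehat A$ lies in the normal directions orthogonal to $\bH$, so it feeds $\tr\Q(A)$ only through $\sum_{\alpha>1}\tr(\widehat A_\alpha A^{\bH})\nu_\alpha$, which does not enter $|\tr\,\cdot|$ with $\bH$. The remaining trace contribution $\frac{1-\eta}{n-1}\sum_{\alpha>1}\tr(\widehat A_\alpha A^{\bH})^2$ is recovered from the full $\ip{\tr A}{\tr\Q(A)}$, which is exactly $|A^{\bH}|^2$, so we must be careful with signs.

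Collecting terms, we aim for an inequality of the form
\[
 \tfrac12D\Phi_u^{1,\eta}(A)[\Q(A)]\ge\Phi\cdot X+R,
\]
where $R\ge0$ and $X=\frac{|A^{\bH}|^2}{H^2}+(\text{terms})\le(n-1)|A|^2$. The bound $X\le(n-1)|A|^2$ follows from $-\Phi_u^{1,0}(\widehat A)\le n|\widehat A|^2$ in Lemma~\ref{lem:scalar}. If $\Phi\le0$ this gives $\ge(n-1)|A|^2\Phi$. If $\Phi>0$, we use $X\ge0$, whence the right side is $\ge0$. The main obstacle is verifying the sign of $X$ when $\Phi>0$ and checking that the cross term $-\frac{1-\eta}{n-1}H^2\cdot(\ldots)$ is controlled. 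If needed, we would use the constraint $\eta\le1/2$ and the bound \eqref{eq:coarse} to compare $H^2$ with $|A|^2$. For the equality statement, $\Phi=D\Phi[\Q(A)]=0$ forces equality in Lemma~\ref{lem:first-algebra}, whose equality case gives $\widehat A=0$.
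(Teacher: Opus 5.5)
Your proposal is correct and follows the paper's proof. It uses the same splitting of \eqref{eq:phi1-derivative} via \eqref{eq:mean-orthogonal-Q}, the same application of Lemma~\ref{lem:first-algebra}, the same bound $0\le X\le(n-1)|A|^2$ for $X=\frac{|A^{\bH}|^2}{H^2}+\frac{n-1}{n}[-\Phi_u^{1,0}(\widehat A)]$, and the same equality argument. The bookkeeping you left as ``expected'' closes exactly, with no need for $\eta\le1/2$ or \eqref{eq:coarse}: the $\frac{1-\eta}{n-1}|(A^{\bH})^\circ|^2$ terms cancel, the remaining trace terms equal $\frac{H^2}{n}\Phi_u^{1,\eta}(A)+\frac{H^2}{n}[-\Phi_u^{1,0}(\widehat A)]$, and together with the $\widehat A$ cross term this leaves $\frac12D\Phi_u^{1,\eta}(A)[\Q(A)]\ge X\,\Phi_u^{1,\eta}(A)+\frac{\eta H^2}{n}[-\Phi_u^{1,0}(\widehat A)]$, which is the paper's $\frac{|A^{\bH}|^2}{H^2}\Phi_u^{1,\eta}(A)+\frac{n-1}{n}\Phi_u^{1,0}(A)[-\Phi_u^{1,0}(\widehat A)]$ rewritten (and $X\ge0$ is immediate, so there is no sign issue when $\Phi_u^{1,\eta}(A)>0$).
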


\begin{proof}
Equations~\eqref{eq:phi1-derivative} and \eqref{eq:mean-orthogonal-Q} give
\begin{align*}
 \frac12D\Phi_u^{1,\eta}(A)[\Q(A)] &=\frac{|A^{\bH}|^2}{H^2}\Phi_u^{1,\eta}(A) +\frac{H^2}{n}[-\Phi_u^{1,0}(\widehat A)]\\
 &\quad+\frac{|(A^{\bH})^\circ|^2}{H^2}[-\Phi_u^{1,0}(A^\circ)] +\frac12D\Phi_u^{1,0}(A^\circ)[\Q(A^\circ)].
\end{align*}
Lemma~\ref{lem:first-algebra} and \eqref{eq:phi1} then give
\begin{equation}\label{eq:first-sharp-reaction}
\begin{aligned}
 \frac12D\Phi_u^{1,\eta}(A)[\Q(A)] &\ge\frac{|A^{\bH}|^2}{H^2}\Phi_u^{1,\eta}(A) +\left(\frac{H^2}{n}+\frac{n-1}{n}\Phi_u^{1,0}(A^\circ)\right) [-\Phi_u^{1,0}(\widehat A)]\\
 &=\frac{|A^{\bH}|^2}{H^2}\Phi_u^{1,\eta}(A) +\frac{n-1}{n}\Phi_u^{1,0}(A)[-\Phi_u^{1,0}(\widehat A)].
\end{aligned}
\end{equation}
Since $\Phi_u^{1,0}(A)=\Phi_u^{1,\eta}(A)+\eta H^2/(n-1)$ and Lemma~\ref{lem:scalar} gives
\[
 0\le\frac{|A^{\bH}|^2}{H^2} +\frac{n-1}{n}[-\Phi_u^{1,0}(\widehat A)] \le\frac{|A^{\bH}|^2}{H^2}+(n-1)|\widehat A|^2 \le(n-1)|A|^2,
\]
we conclude that
\begin{align*}
 D\Phi_u^{1,\eta}(A)[\Q(A)] &\ge2\left(\frac{|A^{\bH}|^2}{H^2} +\frac{n-1}{n}[-\Phi_u^{1,0}(\widehat A)]\right)\Phi_u^{1,\eta}(A)\\
 &\ge2(n-1)|A|^2\min\{\Phi_u^{1,\eta}(A),0\}.
\end{align*}
If $\Phi_u^{1,\eta}(A)=D\Phi_u^{1,\eta}(A)[\Q(A)]=0$, equality holds in Lemma~\ref{lem:first-algebra}, so $\widehat A=0$.
\end{proof}

\subsection{The cone \texorpdfstring{$\cC_2$}{C2}}

Assume $H>0$. We again write $A=(A^{\bH}/H^2)\otimes\bH+\widehat A$. Choose an orthonormal normal frame $\nu_1=\bH/H,\nu_2,\ldots,\nu_q$ such that $\tr(A_\alpha A_\beta)=0$ for distinct $\alpha,\beta>1$. This is possible by diagonalizing the symmetric matrix $(\tr(A_\alpha A_\beta))_{\alpha,\beta>1}$ through an orthogonal change of basis in $\bH^\perp$.

\begin{lemma}\label{lem:second-mixed}
Let $A\in\Sym^2(V)\otimes N$ with $H>0$, and let $u\in\Lambda^2V$ be a unit two-form such that $\Phi_u^{2,\eta}(A) = \min_{|v|=1}\Phi_v^{2,\eta}(A)$. Then
\begin{align*}
 &\sum_{\alpha>1}\left(\tr((A^{\bH})^\circ \widehat A_\alpha)^2 +|(A^{\bH})^\circ\widehat A_\alpha -\widehat A_\alpha(A^{\bH})^\circ|^2\right)\\
 &\le |(A^{\bH})^\circ|^2 \left(\frac{10}{9}|\widehat A|^2 +\frac3{16}\left((n-4)\sum_j |\widehat A\iota_{e_j}u|^2 +2|\widehat A^{[2]}u|^2\right)\right).
\end{align*}
\end{lemma}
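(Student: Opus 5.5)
We would prove this by reducing to a single pair of symmetric matrices and then using the extremal choice of $u$ to absorb the part of the commutator that the crude bound cannot control. Write $B=(A^{\bH})^\circ$ and $C_\alpha=\widehat A_\alpha$ for $\alpha>1$. Work in an orthonormal eigenbasis $e_1,\ldots,e_n$ of $B$ with eigenvalues $\lambda_j$, so that $\sum_j\lambda_j=0$. Then
\[
 \tr(BC_\alpha)^2+|BC_\alpha-C_\alpha B|^2 =\Bigl(\sum_j\lambda_j(C_\alpha)_{jj}\Bigr)^2+\sum_{j\ne k}(\lambda_j-\lambda_k)^2(C_\alpha)_{jk}^2 .
\]
As in the proof of Lemma~\ref{lem:reaction-zero-mean}, Cauchy--Schwarz bounds the diagonal term by $|B|^2\sum_j(C_\alpha)_{jj}^2$. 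This is within the budget $\frac{10}9|B|^2|C_\alpha|^2$. The same holds for every off-diagonal entry with $(\lambda_j-\lambda_k)^2\le\frac{10}9|B|^2$. So the whole difficulty lies in the off-diagonal entries $(C_\alpha)_{jk}$ for pairs $(j,k)$ with $(\lambda_j-\lambda_k)^2>\frac{10}9|B|^2$.

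Next we analyse these bad pairs. The constraint $\sum_l\lambda_l=0$ gives
\[
 (\lambda_j-\lambda_k)^2\le 2|B|^2-\frac{n}{n-2}\Bigl(\sum_{l\ne j,k}\lambda_l^2\Bigr)-(\lambda_j+\lambda_k)^2 .
\]
Hence a bad pair forces $B$ to be concentrated on $e_j,e_k$ with $\lambda_j\approx-\lambda_k\approx\pm|B|/\sqrt2$. Since only one such pair can carry most of $|B|^2$, there is essentially one bad pair, say $(1,2)$. The excess to be controlled is then at most $(2-\frac{10}9)|B|^2\sum_\alpha(C_\alpha)_{12}^2$, i.e.\ $\frac89|B|^2\sum_\alpha(C_\alpha)_{12}^2$, up to corrections that vanish with the concentration. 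On the other side, for $v=e_1\wedge e_2$ we have
\[
 \sum_j|C_\alpha\iota_{e_j}v|^2=|C_\alpha e_1|^2+|C_\alpha e_2|^2\ge 2(C_\alpha)_{12}^2,
\]
and $|C_\alpha^{[2]}v|^2\ge\sum_{l>2}\bigl((C_\alpha)_{l1}^2+(C_\alpha)_{l2}^2\bigr)$. So $\frac3{16}\bigl((n-4)\sum_j|\widehat A\iota_{e_j}v|^2+2|\widehat A^{[2]}v|^2\bigr)\ge\frac{3(n-4)}8\sum_\alpha(C_\alpha)_{12}^2$. For $n\ge10$ this exceeds $\frac89\sum_\alpha(C_\alpha)_{12}^2$ with room to spare, and the spare room should absorb the concentration errors.

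The main obstacle is to pass from $v=e_1\wedge e_2$ to the actual minimizer $u$. The hypothesis is that $u$ maximizes the full quadratic form
\[
 v\mapsto(n-4)\sum_{\alpha}\ip{((A_\alpha^\circ)^2)^{[2]}v}v+2\sum_\alpha|(A_\alpha^\circ)^{[2]}v|^2,
\]
which contains the $\nu_1$-component $B/H$ as well as $\widehat A$; the right side of the lemma, however, uses only the $\widehat A$-part at $u$. We would first try to show that a bad pair forces $u$ to be close to $\pm e_1\wedge e_2$. The $B$-part of the form is maximized exactly at $e_1\wedge e_2$ when $B$ is concentrated on $e_1,e_2$, and its gap to other unit two-forms is of order $|B|^2/H^2$. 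If this gap is comparable to, or dominates, the $\widehat A$-contribution, then $u$ differs from $e_1\wedge e_2$ by a controlled amount. In that case the $\widehat A$-form at $u$ is at least that at $e_1\wedge e_2$ minus an error proportional to the distance, and the margin from $\frac{3(n-4)}8$ against $\frac89$ absorbs it.

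If instead $\widehat A$ dominates, the crude bound $2|B|^2|\widehat A|^2$ already sits comfortably within the budget. In that regime we would exploit the normalization $\tr(A_\alpha A_\beta)=0$ for distinct $\alpha,\beta>1$ and the smallness of $|B|$ relative to $|\widehat A|$. We would expect the final constants $\frac{10}9$ and $\frac3{16}$ to come from optimizing the threshold that separates these two cases.
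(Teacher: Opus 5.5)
\emph{Verdict:} the proposal has a genuine gap at the step you flag as the main obstacle.

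Your eigenbasis reduction is fine, and so is the observation that only the entries $(\widehat A_\alpha)_{jk}$ with $(\lambda_j-\lambda_k)^2>\frac{10}{9}|B|^2$ need extra control. But the passage from $e_1\wedge e_2$ to $u$ is left open, and neither branch of your dichotomy works.

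The closeness branch needs a spectral gap for the $B$-part of the form, and there may be none. For $\lambda=(a,-a/2,-a/2,0,\ldots,0)$, the pairs $(1,2)$ and $(1,3)$ give the same value of $(n-2)(\lambda_k^2+\lambda_l^2)+4\lambda_k\lambda_l$. The top eigenspace is then degenerate, and $u$ may lie anywhere in it. The same example has two bad pairs, since $(\lambda_1-\lambda_2)^2=(\lambda_1-\lambda_3)^2=\frac32|B|^2$. So ``essentially one bad pair'' is false. Moreover, $\frac{10}{9}$ is a fixed threshold, so no corrections vanish with concentration.

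The fallback branch is false as well. For fixed $u$, both sides are homogeneous of degree two in $B$, so the size of $|B|/H$ relative to $|\widehat A|$ cannot help. Fitting $2|B|^2|\widehat A|^2$ into the budget requires $Q_{\widehat A}(u)\ge\frac{128}{27}|\widehat A|^2$, where
\[
Q_X(w)=(n-4)\sum_{\alpha,j}|X_\alpha^\circ\iota_{e_j}w|^2+2\sum_\alpha|(X_\alpha^\circ)^{[2]}w|^2 .
\]
This fails when the $\widehat A_\alpha$ are a rescaled orthonormal basis of $\Sym_0(V)$, which is compatible with $\tr(\widehat A_\alpha\widehat A_\beta)=0$. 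In that case $Q_{\widehat A}(w)=\frac{2(n-3)}{n-1}|\widehat A|^2$ for every $w$.

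The missing idea is an exchange argument that makes any closeness statement unnecessary. The form splits over normal components, $Q_A=Q_{A_1\otimes\nu_1}+Q_{\widehat A}$. Also, $\Phi_w^{2,\eta}(A)$ is a $w$-independent quantity minus $Q_A(w)$, so $u$ maximizes $Q_A$. Let $v=f_1\wedge f_2$ be any maximizer of $Q_{A_1\otimes\nu_1}$. This form is diagonal in the basis $f_k\wedge f_l$, with eigenvalues $[(n-2)(\lambda_k^2+\lambda_l^2)+4\lambda_k\lambda_l]/H^2$. Then
\[
 Q_{\widehat A}(u)=Q_A(u)-Q_{A_1\otimes\nu_1}(u)\ge Q_A(v)-Q_{A_1\otimes\nu_1}(v)=Q_{\widehat A}(v).
\]
So it suffices to prove the estimate with $v$ in place of $u$, and ties are harmless.

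The distinguished pair must then be this maximizing pair, not a bad pair. Maximality of $(1,2)$ yields:
\begin{itemize}
\item $(\lambda_i-\lambda_j)^2\le\frac{10}{9}|B|^2$ for $i,j>2$;
\item $(\lambda_1-\lambda_i)^2,\ (\lambda_2-\lambda_i)^2\le\left(\frac{10}{9}+\frac{3(n-2)}{32}\right)|B|^2$ for $i>2$;
\item $(\lambda_1-\lambda_2)^2\le 2|B|^2$.
\end{itemize}
The excesses are absorbed by the terms $2(n-4)(\widehat A_\alpha)_{12}^2$ and $(n-2)\sum_{i>2}\left((\widehat A_\alpha)_{1i}^2+(\widehat A_\alpha)_{2i}^2\right)$ contained in $Q_{\widehat A}(v)$. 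So you need the $(1,i)$ and $(2,i)$ entries, not only the $(1,2)$ entry.

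A smaller point: your $(1,2)$ excess should be $\frac{16}{9}$ rather than $\frac{8}{9}$, since the sum over $j\neq k$ counts that entry twice. It still lies below $\frac{3(n-4)}{8}$.
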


\begin{proof}
Choose an orthonormal eigenbasis $f_1,\ldots,f_n$ of $(A^{\bH})^\circ$, with eigenvalues $\lambda_1,\ldots,\lambda_n$. We choose $v=f_1\wedge f_2$ to minimize $\Phi_w^{2,0}((A^{\bH}/H^2)\otimes\bH)$ among unit two-forms $w$. To do this, relabel the basis so that
\begin{equation}\label{eq:second-maxpair}
 (n-2)(\lambda_1^2+\lambda_2^2)+4\lambda_1\lambda_2 \ge(n-2)(\lambda_k^2+\lambda_l^2)+4\lambda_k\lambda_l \qquad(k<l).
\end{equation}
For any unit two-form $w=\sum_{k<l}w_{kl}f_k\wedge f_l$, we have $\sum_{k<l}w_{kl}^2=1$, and \eqref{eq:phi2} gives
\begin{align*}
 &\Phi_w^{2,0}((A^{\bH}/H^2)\otimes\bH) -\Phi_v^{2,0}((A^{\bH}/H^2)\otimes\bH)\\
 &=\frac1{H^2}\sum_{k<l}\bigl[(n-2)(\lambda_1^2+\lambda_2^2 -\lambda_k^2-\lambda_l^2) +4(\lambda_1\lambda_2-\lambda_k\lambda_l)\bigr]w_{kl}^2\\
 &\ge0.
\end{align*}
Hence $v$ minimizes $\Phi_w^{2,0}((A^{\bH}/H^2)\otimes\bH)$ among unit two-forms $w$. For distinct $i,j>2$, adding \eqref{eq:second-maxpair} for $(k,l)=(1,i),(1,j),(2,i),(2,j)$ and dividing by two gives
\[
 (n-2)(\lambda_1^2+\lambda_2^2-\lambda_i^2-\lambda_j^2) +8\lambda_1\lambda_2-2(\lambda_1+\lambda_2)(\lambda_i+\lambda_j)\ge0.
\]
Since $\sum_k\lambda_k=0$, Cauchy--Schwarz gives $\sum_{k\notin\{1,2,i,j\}}\lambda_k^2 \ge(\lambda_1+\lambda_2+\lambda_i+\lambda_j)^2/(n-4)$. Hence
\begin{align*}
 &(\lambda_i-\lambda_j)^2\\
 &\le\frac n{n-1}|(A^{\bH})^\circ|^2 -\frac3{n-1}(\lambda_1-\lambda_2)^2-\frac{[2(\lambda_1+\lambda_2)+(n-2)(\lambda_i+\lambda_j)]^2}{(n-1)(n-4)}\\
 &\le\frac{10}{9}|(A^{\bH})^\circ|^2.
\end{align*}

For $i>2$, taking $(k,l)=(1,i)$ in \eqref{eq:second-maxpair} gives
\[
 (\lambda_2-\lambda_i)[4\lambda_1+(n-2)(\lambda_2+\lambda_i)]\ge0.
\]
If $(\lambda_1-\lambda_i)(\lambda_2-\lambda_i)\le0$, then
\[
 |\lambda_1-\lambda_i|^2 \le\left|\lambda_1-\frac{\lambda_2+\lambda_i}{2}\right|^2 \le\frac32|(A^{\bH})^\circ|^2.
\]
Otherwise, $(\lambda_1-\lambda_i)[4\lambda_1+(n-2)(\lambda_2+\lambda_i)]\ge0$. Using $\sum_k\lambda_k=0$ and Cauchy--Schwarz, we obtain
\begin{align*}
 |\lambda_1-\lambda_i|^2 &\le\left|\lambda_1-\lambda_i+ \frac{4\lambda_1+(n-2)(\lambda_2+\lambda_i)}{2n}\right|^2\\
 &=\left|\frac{n+1}{n}\lambda_1 +\frac{n-4}{2n}\lambda_2-\frac{n+4}{2n}\lambda_i -\frac1n\sum_{k\notin\{1,2,i\}}\lambda_k\right|^2\\
 &\le\left(\frac32+\frac3n+\frac6{n^2}\right)|(A^{\bH})^\circ|^2\\
&\le\left(\frac{10}{9}+\frac{3(n-2)}{32}\right)|(A^{\bH})^\circ|^2.
\end{align*}
Since $n\ge10$, this also bounds $|\lambda_1-\lambda_i|^2$ in the first case. Taking $(k,l)=(2,i)$ in \eqref{eq:second-maxpair} gives the same bound with $\lambda_1$ replaced by $\lambda_2$. We also have $(\lambda_1-\lambda_2)^2\le2|(A^{\bH})^\circ|^2 \le(10/9+3(n-4)/16)|(A^{\bH})^\circ|^2$.

The difference $\Phi_w^{2,\eta}(A)-\Phi_w^{2,0}(A)$ is independent of $w$, so $\Phi_u^{2,0}(A)\le\Phi_v^{2,0}(A)$ by the hypothesis. By the choice of $v$,
\begin{align*}
 &\Phi_v^{2,0}(\widehat A)-\Phi_u^{2,0}(\widehat A)\\
 &=\Phi_v^{2,0}(A)-\Phi_u^{2,0}(A)+\Phi_u^{2,0}((A^{\bH}/H^2)\otimes\bH)-\Phi_v^{2,0}((A^{\bH}/H^2)\otimes\bH)\\
 &\ge0.
\end{align*}
Write $(\widehat A_\alpha)_{ij}=\ip{\widehat A_\alpha f_j}{f_i}$. Applying the bounds for $(\lambda_i-\lambda_j)^2$, and then using $\Phi_v^{2,0}(\widehat A)\ge\Phi_u^{2,0}(\widehat A)$ in \eqref{eq:phi2}, we obtain
\begin{align*}
 &\sum_{\alpha>1}\left(\tr((A^{\bH})^\circ\widehat A_\alpha)^2 +|(A^{\bH})^\circ\widehat A_\alpha -\widehat A_\alpha(A^{\bH})^\circ|^2\right)\\
 &=\sum_{\alpha>1}\left(\sum_i\lambda_i(\widehat A_\alpha)_{ii}\right)^2 +2\sum_{\alpha>1,i<j}(\lambda_i-\lambda_j)^2(\widehat A_\alpha)_{ij}^2\\
 &\le|(A^{\bH})^\circ|^2\left[\frac{10}{9}|\widehat A|^2 +\frac3{16}\sum_{\alpha>1}\left(2(n-4)(\widehat A_\alpha)_{12}^2 +(n-2)\sum_{i>2}((\widehat A_\alpha)_{1i}^2 +(\widehat A_\alpha)_{2i}^2)\right)\right]\\
 &\le|(A^{\bH})^\circ|^2\left[\frac{10}{9}|\widehat A|^2 +\frac3{16}\left((n-4)\sum_k|\widehat A\iota_{f_k}u|^2 +2|\widehat A^{[2]}u|^2\right)\right].
\end{align*}
\end{proof}

\begin{samepage}
\begin{lemma}\label{lem:second-operator-sum}
Let $A\in\Sym^2(V)\otimes N$ with $H>0$, and let $u\in\Lambda^2V$ be a unit two-form such that
\begin{equation}\label{eq:second-normalization}
\begin{gathered}
 (n-4)\sum_\alpha((A_\alpha^\circ)^2)^{[2]} +2\sum_\alpha((A_\alpha^\circ)^{[2]})^2\le\Id,\\
 (n-4)\sum_{\alpha,j}|A_\alpha^\circ\iota_{e_j}u|^2 +2\sum_\alpha|(A_\alpha^\circ)^{[2]}u|^2=1.
\end{gathered}
\end{equation}
Then
\begin{align*}
 &(n-4)\sum_{\alpha,j} \ip{A_\alpha^\circ\iota_{e_j}u}{\Q(A^\circ)_\alpha\iota_{e_j}u} +2\sum_\alpha \ip{(A_\alpha^\circ)^{[2]}u}{(\Q(A^\circ)_\alpha)^{[2]}u}\\
 &\le |A_1^\circ|^2+ \left(\frac{11}{8}+\frac58|A_1^\circ|^2-\frac12|\widehat A|^2\right)|\widehat A|^2\\
 &\quad+\left(\frac{13}{40}-\frac15|A_1^\circ|^2 +\frac65|\widehat A|^2\right) \left((n-4)\sum_j|\widehat A\iota_{e_j}u|^2 +2|\widehat A^{[2]}u|^2\right).
\end{align*}
\end{lemma}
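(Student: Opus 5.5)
The plan is to mirror the computation in Lemma~\ref{lem:first-algebra}, now on $\Lambda^2V$. Work in the frame $\nu_1=\bH/H,\nu_2,\ldots,\nu_q$ fixed at the start of this subsection, in which $\tr(A_\alpha A_\beta)=0$ for distinct $\alpha,\beta>1$. Write $B_\alpha=A_\alpha^\circ$, so that $B_1=A_1^\circ$ and $B_\alpha=\widehat A_\alpha$ for $\alpha>1$. Set
\[
 \mathcal L(U,U')=(n-4)\sum_j\ip{U\iota_{e_j}u}{U'\iota_{e_j}u}+2\ip{U^{[2]}u}{U'^{[2]}u}.
\]
With this notation, the normalization \eqref{eq:second-normalization} reads $\sum_\alpha\mathcal L(B_\alpha,B_\alpha)=1$. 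The operator $\mathcal M:=(n-4)\sum_\alpha(B_\alpha^2)^{[2]}+2\sum_\alpha(B_\alpha^{[2]})^2$ satisfies $\mathcal M\le\Id$ and $\ip{\mathcal Mu}u=1$. Hence $u$ is a top eigenvector of $\mathcal M$, so $\mathcal Mu=u$; this is the analogue of $\sum_\alpha(A_\alpha^\circ)^2u=u$ in the $\cC_1$ case.

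Next, expand $\Q(A^\circ)_\alpha$ via \eqref{eq:reaction} into three pieces:
\begin{itemize}
\item the trace part $\sum_\beta\tr(B_\alpha B_\beta)B_\beta$;
\item the anticommutator-type part $\sum_\beta(B_\beta^2B_\alpha+B_\alpha B_\beta^2)$;
\item the conjugation part $-2\sum_\beta B_\beta B_\alpha B_\beta$.
\end{itemize}
The left side of the lemma is then
\[
 \sum_{\alpha,\beta}\tr(B_\alpha B_\beta)\mathcal L(B_\alpha,B_\beta)+\sum_{\alpha,\beta}\mathcal L\bigl(B_\alpha,[B_\beta,[B_\beta,B_\alpha]]\bigr),
\]
using $B_\beta^2B_\alpha+B_\alpha B_\beta^2-2B_\beta B_\alpha B_\beta=[B_\beta,[B_\beta,B_\alpha]]$.

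\textbf{Trace term.} The orthogonality of the frame makes the matrix $\tr(B_\alpha B_\beta)$ nearly diagonal. Its only off-diagonal entries are $\tr(B_1\widehat A_\alpha)$, and these equal $\tr((A^{\bH})^\circ\widehat A_\alpha)/H$. Split the sum as follows.
\begin{itemize}
\item The $(1,1)$ entry gives $|B_1|^2\mathcal L(B_1,B_1)$.
\item The $(\alpha,\alpha)$ entries with $\alpha>1$ give $\sum_{\alpha>1}|\widehat A_\alpha|^2\mathcal L(\widehat A_\alpha,\widehat A_\alpha)\le|\widehat A|^2\,\widehat{\mathcal L}$, where $\widehat{\mathcal L}:=(n-4)\sum_j|\widehat A\iota_{e_j}u|^2+2|\widehat A^{[2]}u|^2$.
\item The cross terms are bounded by Cauchy--Schwarz and Young.
\end{itemize}
Substitute $\mathcal L(B_1,B_1)=1-\widehat{\mathcal L}$. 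This produces the structure $|A_1^\circ|^2(1-\widehat{\mathcal L})$, which accounts for the $|A_1^\circ|^2$ and $-\tfrac15|A_1^\circ|^2\widehat{\mathcal L}$ shapes in the target after part of it is borrowed for the cross terms.

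\textbf{Commutator term.} Integrate by parts in the inner product: $\mathcal L(B_\alpha,[B_\beta,[B_\beta,B_\alpha]])$ rewrites as a sum of $\mathcal L$-type products of $[B_\beta,B_\alpha]$ with itself, plus the terms coming from the non-commutation of $U\mapsto U\iota u$ and $U\mapsto U^{[2]}u$ with $B_\beta$. The latter are controlled via $\mathcal Mu=u$ and $\mathcal M\le\Id$, exactly as $\sum|B_\beta B_\alpha u|^2\le1$ was used in Lemma~\ref{lem:first-algebra}. The contributions then split by type.
\begin{itemize}
\item The $\beta=\alpha$ terms vanish.
\item The $(1,1)$ contribution vanishes, which is why $|A_1^\circ|^2$ appears only with coefficient $1$.
\item The mixed $(1,\alpha)$ commutators $[A_1^\circ,\widehat A_\alpha]$ are bounded through $\mathcal M\le\Id$ by quantities like $|A_1^\circ|^2|\widehat A|^2$ and $|A_1^\circ|^2\widehat{\mathcal L}$, giving the $\tfrac58$ and the $\tfrac15$-type coefficients. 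One could alternatively use Lemma~\ref{lem:second-mixed}, but here one only needs the crude operator bound.
\item The $(\alpha,\beta)$ pairs with $\alpha,\beta>1$ give quartic terms in $\widehat A$. For these, keep the negative part $-\tfrac12|\widehat A|^4$ in the same way that $-4\sum|(A_\alpha^\circ)^2|^2$ was retained in the $\cC_1$ case, and bound the positive part by $\tfrac65|\widehat A|^2\widehat{\mathcal L}$.
\end{itemize}

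\textbf{Main obstacle.} The step I expect to be hardest is the commutator term: getting the exact constants $\tfrac{11}{8}$, $\tfrac{13}{40}$, $\tfrac58$, $\tfrac65$, $\tfrac15$, $\tfrac12$. The plan is to work in an eigenbasis of $A_1^\circ$ and to write $u=\sum u_{kl}e_k\wedge e_l$. This makes the $(1,\alpha)$ terms explicit sums of $(\lambda_k-\lambda_l)^2(\widehat A_\alpha)_{kl}^2$ weighted by $u$-coefficients. Then tune the Young's-inequality weights, using $n\ge10$ to absorb the $n$-dependent coefficients (e.g.\ $(n-4)/(n-2)\le1$). The first thing to try is choosing the weights so that the coefficient of $\widehat{\mathcal L}$ comes out as $\tfrac{13}{40}$ at $|A_1^\circ|=|\widehat A|=0$, and then adjusting the rest.
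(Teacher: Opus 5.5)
Your outline stops where the proof begins. The commutator part of $\Q(A^\circ)$, and with it every constant in the conclusion, is left to ``tuning''. Two ingredients are missing.

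First, the route you propose for the commutator term does not obviously close. Integrating by parts into squares of commutators leaves positive terms such as $(n-4)\sum_j|[B_\beta,B_\alpha]\iota_{e_j}u|^2+2|[B_\beta,B_\alpha]^{[2]}u|^2$, which you would still have to bound above, plus a remainder whose sign you never establish. The paper instead writes $[X,[X,Y]]=2[Y,X]X+[X^2,Y]$, both on $V$ (with $X=B_\beta$, $Y=B_\alpha$) and on $\Lambda^2V$ (with $X=B_\beta^{[2]}$, $Y=B_\alpha^{[2]}$). For symmetric $B$ and skew $K$ one has $\sum_j\ip{B\iota_{e_j}u}{K\iota_{e_j}u}=\ip{B^{[2]}u}{K^{[2]}u}$. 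Using this, the $[X^2,Y]$ pieces from the two parts of $\mathcal L$ combine with the two parts of $\mathcal M$ into exactly $\sum_\alpha\ip{B_\alpha^{[2]}u}{[\mathcal M,B_\alpha^{[2]}]u}=-\sum_\alpha\ip{B_\alpha^{[2]}u}{(\Id-\mathcal M)B_\alpha^{[2]}u}\le0$. This matching of the pieces of $\mathcal L$ with the pieces of $\mathcal M$ is what makes $\mathcal Mu=u$ usable, and your sketch lacks it. What remains are pairings of $B_\alpha\iota_{e_j}u$ with $(\tr(B_\alpha B_\beta)\Id+2[B_\alpha,B_\beta])B_\beta\iota_{e_j}u$, and the $\Lambda^2$ analogue. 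In these the commutator acts on $B_\beta\iota_{e_j}u$, not on $\iota_{e_j}u$.

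Second, you need the weighted estimate for the mixed pairings with $\beta=1<\alpha$. The paper handles them by Young's inequality together with a bound on the $\mathcal L$-weighted square of $(\tr(A_1^\circ\widehat A_\alpha)\Id+2[\widehat A_\alpha,A_1^\circ])A_1^\circ\iota_{e_j}u$ (and its $\Lambda^2$ analogue) by $\frac25(1+4|A_1^\circ|^2)|\widehat A_\alpha|^2\mathcal L(B_1,B_1)$. That bound rests on:
the eigenbasis estimates for the commutators;
$3(\lambda_n-\lambda_1)^2\le1$, from \eqref{eq:second-normalization} evaluated at $e_1\wedge e_n$;
$5|(A_1^\circ)^{[2]}u|^2\le\mathcal L(B_1,B_1)$;
$2|A_1^\circ|^2\ge(\lambda_n-\lambda_1)^2$.
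The factor $\mathcal L(B_1,B_1)=1-\widehat{\mathcal L}$ and the bound $(\lambda_n-\lambda_1)^2\le\frac13$ are what turn quartic cross terms into the quadratic terms $\frac{11}8|\widehat A|^2$ and $\frac{13}{40}\widehat{\mathcal L}$. The constants then come from the choice $\varepsilon=\frac{13}{40}+\frac45|A_1^\circ|^2+\frac14|\widehat A|^2$ and the trace bound $|A_1^\circ|^2+|\widehat A|^2\le\frac{n-1}{2(n-3)}\le\frac9{14}$. Neither appears in your plan.

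Your account of where the terms come from is also partly wrong, so the tuning you describe cannot succeed as stated. The term $-\frac12|\widehat A|^4$ is not a retained negative part of the $\alpha,\beta>1$ terms, and no bound of that group by $-\frac12|\widehat A|^4+\frac65|\widehat A|^2\widehat{\mathcal L}$ holds. Take $q=2$, $u=e_1\wedge e_2$, and $A_1^\circ=\mathrm{diag}(a,a,b,\ldots,b)$ with $2a+(n-2)b=0$ and $a^2=1/(2n)$. Take also a small traceless $\widehat A_2=\mathrm{diag}(0,0,\mu_3,\ldots,\mu_n)\neq0$. Then:
\eqref{eq:second-normalization} holds;
all commutators vanish;
the only pair $(2,2)$ contributes $|\widehat A|^2\widehat{\mathcal L}=0$;
yet $-\frac12|\widehat A|^4<0$.

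In the paper, $-\frac12|\widehat A|^2$ enters only through the bound $\frac{2(1+4|A_1^\circ|^2)}{5\varepsilon}\le\frac{11}8+\frac58|A_1^\circ|^2-\frac12|\widehat A|^2$. It is paid for by $\frac{11}8|\widehat A|^2$ via $|\widehat A|^2\le\frac9{14}$.

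Two other attributions are also off. The term $-4\sum_{\alpha>1}|(A_\alpha^\circ)^2|^2$ in Lemma~\ref{lem:first-algebra} came from the scalar part $\sum_\alpha\tr(A_\alpha^\circ\Q(A^\circ)_\alpha)$. Its $\cC_2$ counterpart is \eqref{eq:second-scalar}, not this lemma. The coefficient $-\frac15|A_1^\circ|^2$ of $\widehat{\mathcal L}$ equals $-1+\frac45$. The $-1$ comes from the $(1,1)$ term after substituting $\mathcal L(B_1,B_1)=1-\widehat{\mathcal L}$, and the $\frac45$ from the $\frac45|A_1^\circ|^2$ in $\varepsilon$; it does not come from the mixed commutators.
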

\end{samepage}

\begin{proof}
Choose an orthonormal eigenbasis $e_1,\ldots,e_n$ of $A_1^\circ$, with eigenvalues $\lambda_1\le\cdots\le\lambda_n$. For every $\alpha>1$, we have
\begin{equation}\label{eq:second-entrywise-estimates}
\begin{aligned}
 \tr(A_1^\circ A_\alpha^\circ)^2 &=\left(\sum_k\lambda_k(A_\alpha^\circ)_{kk}\right)^2 \le |A_1^\circ|^2\sum_k(A_\alpha^\circ)_{kk}^2,\\
 \|A_\alpha^\circ A_1^\circ-A_1^\circ A_\alpha^\circ\|_{\op}^2 &\le\frac12\sum_{k\ne l}(\lambda_k-\lambda_l)^2(A_\alpha^\circ)_{kl}^2 \le\frac{(\lambda_n-\lambda_1)^2}{2} \sum_{k\ne l}(A_\alpha^\circ)_{kl}^2,\\
 \|(A_\alpha^\circ A_1^\circ-A_1^\circ A_\alpha^\circ)^{[2]}\|_{\op}^2 &\le\left(\sum_k|\lambda_k| \left\|(A_\alpha^\circ e_k\otimes e_k-e_k\otimes A_\alpha^\circ e_k)^{[2]} \right\|_{\op}\right)^2\\
 &=\left(\sum_k|\lambda_k| \left(\sum_{l\ne k}(A_\alpha^\circ)_{kl}^2\right)^{1/2}\right)^2 \le |A_1^\circ|^2\sum_{k\ne l}(A_\alpha^\circ)_{kl}^2.
\end{aligned}
\end{equation}
Evaluating \eqref{eq:second-normalization} on $e_1\wedge e_n$ gives
\[
 3(\lambda_n-\lambda_1)^2 \le\frac{n-4}{2}(\lambda_n-\lambda_1)^2 \le(n-4)(\lambda_1^2+\lambda_n^2)+2(\lambda_1+\lambda_n)^2\le1.
\]
Moreover,
\[
 2|A_1^\circ|^2-(\lambda_n-\lambda_1)^2 =(\lambda_1+\lambda_n)^2+2\sum_{1<k<n}\lambda_k^2\ge0.
\]
Since $|(A_1^\circ)^{[2]}u|^2\le2\sum_j|A_1^\circ\iota_{e_j}u|^2$ and $n\ge10$,
\[
 5|(A_1^\circ)^{[2]}u|^2 \le(n-4)\sum_j|A_1^\circ\iota_{e_j}u|^2 +2|(A_1^\circ)^{[2]}u|^2.
\]
Since the commutators are skew-symmetric, the cross terms vanish when we expand the following squared norms. Using \eqref{eq:second-entrywise-estimates} and the three inequalities above, we obtain
\begin{align}
 &(n-4)\sum_j\left| (\tr(A_1^\circ A_\alpha^\circ)\Id +2A_\alpha^\circ A_1^\circ-2A_1^\circ A_\alpha^\circ) A_1^\circ\iota_{e_j}u\right|^2\notag\\*
 &\quad+2\left| (\tr(A_1^\circ A_\alpha^\circ)\Id +2(A_\alpha^\circ A_1^\circ-A_1^\circ A_\alpha^\circ)^{[2]}) (A_1^\circ)^{[2]}u\right|^2\notag\\
 &\le |A_1^\circ|^2\sum_k(A_\alpha^\circ)_{kk}^2 \left((n-4)\sum_j|A_1^\circ\iota_{e_j}u|^2 +2|(A_1^\circ)^{[2]}u|^2\right)\notag\\*
 &\quad+\sum_{k\ne l}(A_\alpha^\circ)_{kl}^2 \left(2(n-4)(\lambda_n-\lambda_1)^2 \sum_j|A_1^\circ\iota_{e_j}u|^2 +8|A_1^\circ|^2|(A_1^\circ)^{[2]}u|^2\right)\notag\\
 &=\left[|A_1^\circ|^2\sum_k(A_\alpha^\circ)_{kk}^2 +2(\lambda_n-\lambda_1)^2\sum_{k\ne l}(A_\alpha^\circ)_{kl}^2\right]\cdot\left((n-4)\sum_j|A_1^\circ\iota_{e_j}u|^2 +2|(A_1^\circ)^{[2]}u|^2\right)\notag\\*
 &\quad+4\left(2|A_1^\circ|^2-(\lambda_n-\lambda_1)^2\right) \sum_{k\ne l}(A_\alpha^\circ)_{kl}^2|(A_1^\circ)^{[2]}u|^2\notag\\
 &\le\left[|A_1^\circ|^2\sum_k(A_\alpha^\circ)_{kk}^2 +\frac{6(\lambda_n-\lambda_1)^2+8|A_1^\circ|^2}{5} \sum_{k\ne l}(A_\alpha^\circ)_{kl}^2\right]\notag\\*
 &\qquad\cdot\left((n-4)\sum_j|A_1^\circ\iota_{e_j}u|^2 +2|(A_1^\circ)^{[2]}u|^2\right)\notag\\
 &\le\frac25(1+4|A_1^\circ|^2)|A_\alpha^\circ|^2 \left((n-4)\sum_j|A_1^\circ\iota_{e_j}u|^2 +2|(A_1^\circ)^{[2]}u|^2\right).
\label{eq:second-weighted-estimate}
\end{align}
\enlargethispage{\baselineskip}

For $\alpha,\beta>1$, the calculation in \eqref{eq:second-entrywise-estimates}, performed in an orthonormal eigenbasis of $A_\beta^\circ$, gives
\[
 \max\left\{ \|A_\alpha^\circ A_\beta^\circ-A_\beta^\circ A_\alpha^\circ\|_{\op}, \|(A_\alpha^\circ A_\beta^\circ-A_\beta^\circ A_\alpha^\circ)^{[2]}\|_{\op} \right\}\le |A_\alpha^\circ|\,|A_\beta^\circ|.
\]
This calculation uses the symmetry of the matrices; the normal frame remains fixed. For distinct $\alpha,\beta>1$, we have $\tr(A_\alpha^\circ A_\beta^\circ)=0$, while for $\alpha=\beta>1$ the trace is $|A_\alpha^\circ|^2$.

By \eqref{eq:second-normalization},
\[
 \left[(n-4)\sum_\beta((A_\beta^\circ)^2)^{[2]} +2\sum_\beta((A_\beta^\circ)^{[2]})^2\right]u=u.
\]
Substituting \eqref{eq:reaction} and using this equality gives
\begin{align*}
 &(n-4)\sum_{\alpha,j}\ip{A_\alpha^\circ\iota_{e_j}u} {\Q(A^\circ)_\alpha\iota_{e_j}u} +2\sum_\alpha\ip{(A_\alpha^\circ)^{[2]}u} {(\Q(A^\circ)_\alpha)^{[2]}u}\\*
 &=(n-4)\sum_{\alpha,\beta,j} \left\langle A_\alpha^\circ\iota_{e_j}u, (\tr(A_\alpha^\circ A_\beta^\circ)\Id +2A_\alpha^\circ A_\beta^\circ-2A_\beta^\circ A_\alpha^\circ) A_\beta^\circ\iota_{e_j}u\right\rangle\\*
 &\quad+2\sum_{\alpha,\beta} \left\langle(A_\alpha^\circ)^{[2]}u, (\tr(A_\alpha^\circ A_\beta^\circ)\Id +2(A_\alpha^\circ A_\beta^\circ-A_\beta^\circ A_\alpha^\circ)^{[2]}) (A_\beta^\circ)^{[2]}u\right\rangle\\*
 &\quad-\sum_\alpha\left\langle(A_\alpha^\circ)^{[2]}u, \left[\Id-(n-4)\sum_\beta((A_\beta^\circ)^2)^{[2]} -2\sum_\beta((A_\beta^\circ)^{[2]})^2\right] (A_\alpha^\circ)^{[2]}u\right\rangle.
\end{align*}
The third term on the right is nonpositive by \eqref{eq:second-normalization}. Let $\varepsilon>0$. We use Young's inequality $2\ip vw\le\varepsilon|v|^2+\varepsilon^{-1}|w|^2$ for $\alpha=1<\beta$ and $\beta=1<\alpha$, and Cauchy--Schwarz for $\alpha,\beta>1$.

Using \eqref{eq:second-weighted-estimate}, we obtain
\begin{align}
 &(n-4)\sum_{\alpha,j}\ip{A_\alpha^\circ\iota_{e_j}u} {\Q(A^\circ)_\alpha\iota_{e_j}u} +2\sum_\alpha\ip{(A_\alpha^\circ)^{[2]}u} {(\Q(A^\circ)_\alpha)^{[2]}u}\notag\\*
 &\le |A_1^\circ|^2 \left((n-4)\sum_j|A_1^\circ\iota_{e_j}u|^2 +2|(A_1^\circ)^{[2]}u|^2\right)\notag\\*
 &\quad+2(n-4)\sum_{\alpha>1,j} \left\langle A_\alpha^\circ\iota_{e_j}u, (\tr(A_1^\circ A_\alpha^\circ)\Id +2A_\alpha^\circ A_1^\circ-2A_1^\circ A_\alpha^\circ) A_1^\circ\iota_{e_j}u\right\rangle\notag\\*
 &\quad+4\sum_{\alpha>1} \left\langle(A_\alpha^\circ)^{[2]}u, (\tr(A_1^\circ A_\alpha^\circ)\Id +2(A_\alpha^\circ A_1^\circ-A_1^\circ A_\alpha^\circ)^{[2]}) (A_1^\circ)^{[2]}u\right\rangle\notag\\*
 &\quad+2(n-4)\sum_{\alpha,\beta>1,j} |A_\alpha^\circ|\,|A_\beta^\circ| |A_\alpha^\circ\iota_{e_j}u|\,|A_\beta^\circ\iota_{e_j}u|\notag\\*
 &\quad+4\sum_{\alpha,\beta>1} |A_\alpha^\circ|\,|A_\beta^\circ| |(A_\alpha^\circ)^{[2]}u|\,|(A_\beta^\circ)^{[2]}u|\notag\\
 &\le |A_1^\circ|^2 \left((n-4)\sum_j|A_1^\circ\iota_{e_j}u|^2 +2|(A_1^\circ)^{[2]}u|^2\right)\notag\\*
 &\quad+(2|\widehat A|^2+\varepsilon) \left((n-4)\sum_j|\widehat A\iota_{e_j}u|^2 +2|\widehat A^{[2]}u|^2\right)\notag\\*
 &\quad+\frac{n-4}{\varepsilon}\sum_{\alpha>1,j}\left| (\tr(A_1^\circ A_\alpha^\circ)\Id +2A_\alpha^\circ A_1^\circ-2A_1^\circ A_\alpha^\circ) A_1^\circ\iota_{e_j}u\right|^2\notag\\*
 &\quad+\frac2\varepsilon\sum_{\alpha>1}\left| (\tr(A_1^\circ A_\alpha^\circ)\Id +2(A_\alpha^\circ A_1^\circ-A_1^\circ A_\alpha^\circ)^{[2]}) (A_1^\circ)^{[2]}u\right|^2\notag\\
 &\le\left[|A_1^\circ|^2+ \frac{2(1+4|A_1^\circ|^2)}{5\varepsilon}|\widehat A|^2\right] \left((n-4)\sum_j|A_1^\circ\iota_{e_j}u|^2 +2|(A_1^\circ)^{[2]}u|^2\right)\notag\\*
 &\quad+(2|\widehat A|^2+\varepsilon) \left((n-4)\sum_j|\widehat A\iota_{e_j}u|^2 +2|\widehat A^{[2]}u|^2\right).\label{eq:second-young-bound}
\end{align}

Taking the trace in \eqref{eq:second-normalization} gives
\begin{equation}\label{eq:second-trace-bound}
 |A_1^\circ|^2+|\widehat A|^2\le\frac{n-1}{2(n-3)}\le\frac9{14}.
\end{equation}

Set $\varepsilon=13/40+4|A_1^\circ|^2/5+|\widehat A|^2/4$. By \eqref{eq:second-trace-bound},
\begin{align*}
 &\left(\frac{11}{8}+\frac58|A_1^\circ|^2-\frac12|\widehat A|^2\right) \varepsilon-\frac25(1+4|A_1^\circ|^2)\\
 &=\frac12\left(|A_1^\circ|^2-\frac{19}{64}\right)^2 +\frac{23}{8192} +\frac{|\widehat A|^2}{160} \left[29-39\bigl(|A_1^\circ|^2+|\widehat A|^2\bigr) +19|\widehat A|^2\right]\\
 &\ge\frac12\left(|A_1^\circ|^2-\frac{19}{64}\right)^2 +\frac{23}{8192} +\frac{|\widehat A|^2}{160}\left(29-39\cdot\frac9{14}\right)>0.
\end{align*}
Since $\varepsilon>0$, this gives
\[
 \frac{2(1+4|A_1^\circ|^2)}{5\varepsilon} \le\frac{11}{8}+\frac58|A_1^\circ|^2-\frac12|\widehat A|^2.
\]
Substituting this bound in \eqref{eq:second-young-bound} and using \eqref{eq:second-normalization}, we obtain
\begin{align*}
 &(n-4)\sum_{\alpha,j}\ip{A_\alpha^\circ\iota_{e_j}u} {\Q(A^\circ)_\alpha\iota_{e_j}u} +2\sum_\alpha\ip{(A_\alpha^\circ)^{[2]}u} {(\Q(A^\circ)_\alpha)^{[2]}u}\\
 &\le\left[|A_1^\circ|^2+ \left(\frac{11}{8}+\frac58|A_1^\circ|^2-\frac12|\widehat A|^2\right) |\widehat A|^2\right] \left((n-4)\sum_j|A_1^\circ\iota_{e_j}u|^2 +2|(A_1^\circ)^{[2]}u|^2\right)\\*
 &\quad+(2|\widehat A|^2+\varepsilon) \left((n-4)\sum_j|\widehat A\iota_{e_j}u|^2 +2|\widehat A^{[2]}u|^2\right)\\
 &=\left[|A_1^\circ|^2+ \left(\frac{11}{8}+\frac58|A_1^\circ|^2-\frac12|\widehat A|^2\right) |\widehat A|^2\right] \left(1-(n-4)\sum_j|\widehat A\iota_{e_j}u|^2 -2|\widehat A^{[2]}u|^2\right)\\*
 &\quad+\left(\frac{13}{40}+\frac45|A_1^\circ|^2+\frac94|\widehat A|^2\right) \left((n-4)\sum_j|\widehat A\iota_{e_j}u|^2 +2|\widehat A^{[2]}u|^2\right)\\
 &=|A_1^\circ|^2+ \left(\frac{11}{8}+\frac58|A_1^\circ|^2-\frac12|\widehat A|^2\right) |\widehat A|^2\\*
 &\quad+\left[\frac{13}{40}-\frac15|A_1^\circ|^2 +\left(\frac78+\frac12\bigl(|A_1^\circ|^2+|\widehat A|^2\bigr) -\frac98|A_1^\circ|^2\right)|\widehat A|^2\right]\\*
 &\hspace{12mm}\cdot \left((n-4)\sum_j|\widehat A\iota_{e_j}u|^2 +2|\widehat A^{[2]}u|^2\right)\\
 &\le|A_1^\circ|^2+ \left(\frac{11}{8}+\frac58|A_1^\circ|^2-\frac12|\widehat A|^2\right) |\widehat A|^2\\*
 &\quad+\left(\frac{13}{40}-\frac15|A_1^\circ|^2+\frac65|\widehat A|^2\right) \left((n-4)\sum_j|\widehat A\iota_{e_j}u|^2 +2|\widehat A^{[2]}u|^2\right).
\end{align*}
The last inequality follows from \eqref{eq:second-trace-bound} and $7/8+9/28=67/56<6/5$.
\end{proof}

\Needspace{8\baselineskip}
\begin{lemma}\label{lem:second-directional}
Let $A\in\Sym^2(V)\otimes N$ with $H>0$, and let $u\in\Lambda^2V$ be a unit two-form such that $\Phi_u^{2,\eta}(A) = \min_{|v|=1}\Phi_v^{2,\eta}(A)$. Then
\[
 \frac12D\Phi_u^{2,0}(A^\circ)[\Q(A^\circ)] \ge\Phi_u^{2,0}(A^\circ) \left(\frac{|(A^{\bH})^\circ|^2}{H^2} -\frac{(n-1)(n-2)}{2n(n+1)}\Phi_u^{2,0}(\widehat A)\right).
\]
Equality holds only if $\widehat A=0$.
\end{lemma}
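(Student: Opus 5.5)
The plan is to mirror the proof of Lemma~\ref{lem:first-algebra}, with Lemma~\ref{lem:second-operator-sum} and Lemma~\ref{lem:second-mixed} replacing the ad hoc estimates used there. It suffices to treat $A^\circ\ne0$. The minimizing two-form $u$ for $\Phi^{2,\eta}_\cdot(A)$ also minimizes $\Phi^{2,0}_\cdot(A^\circ)$, because the difference depends only on $H$ and $|A^\circ|^2$. Hence $u$ is a top eigenvector of the operator
\[
 (n-4)\sum_\alpha((A_\alpha^\circ)^2)^{[2]}+2\sum_\alpha((A_\alpha^\circ)^{[2]})^2 .
\]
By homogeneity we normalize exactly as in \eqref{eq:second-normalization}, so that this operator is $\le\Id$ and its value on $u$ equals $1$. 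We work in the normal frame fixed at the start of the subsection, with $\nu_1=\bH/H$ and $\tr(A_\alpha A_\beta)=0$ for distinct $\alpha,\beta>1$. Then $A_1^\circ=(A^{\bH})^\circ/H$ and $A_\alpha^\circ=\widehat A_\alpha$ for $\alpha>1$. We abbreviate
\[
 S=(n-4)\sum_j|\widehat A\iota_{e_j}u|^2+2|\widehat A^{[2]}u|^2 ,
\]
so that $\Phi_u^{2,0}(\widehat A)=-\frac{2(2n-1)}{n-1}|\widehat A|^2-S$ and $\Phi_u^{2,0}(A^\circ)=-\frac{2(2n-1)}{n-1}(|A_1^\circ|^2+|\widehat A|^2)-1$.

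Next we expand $\frac12D\Phi_u^{2,0}(A^\circ)[\Q(A^\circ)]$ using \eqref{eq:phi2-derivative}, which splits it into a full trace term and a directional term. The directional term
\[
 (n-4)\sum\ip{A^\circ_\alpha\iota_{e_j}u}{\Q(A^\circ)_\alpha\iota_{e_j}u}+2\sum\ip{(A_\alpha^\circ)^{[2]}u}{(\Q(A^\circ)_\alpha)^{[2]}u}
\]
is bounded above directly by Lemma~\ref{lem:second-operator-sum}. For the trace term $\sum_\alpha\tr(A_\alpha^\circ\Q(A^\circ)_\alpha)$ we expand as
\[
 \sum_{\alpha,\beta}\tr(A_\alpha^\circ A_\beta^\circ)^2+\sum_{\alpha,\beta}|[A_\alpha^\circ,A_\beta^\circ]|^2 .
\]
The terms are then sorted by index type.
\begin{itemize}
\item The $(1,1)$ term is $|A_1^\circ|^4$.
\item The $\alpha,\beta>1$ terms reduce, via the diagonal normal frame and the Li--Li type inequality $\sum_{\alpha\ne\beta}|[A_\alpha,A_\beta]|^2+\sum(\tr A_\alpha^2)^2\le\frac32|\widehat A|^4$, to at most $\frac32|\widehat A|^4$.
\item The mixed terms $2\sum_{\alpha>1}\bigl(\tr(A_1^\circ\widehat A_\alpha)^2+|[A_1^\circ,\widehat A_\alpha]|^2\bigr)$ are bounded by Lemma~\ref{lem:second-mixed} by $2|A_1^\circ|^2\bigl(\frac{10}{9}|\widehat A|^2+\frac3{16}S\bigr)$.
\end{itemize}

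Substituting these bounds gives a lower bound for $\frac12D\Phi^{2,0}_u(A^\circ)[\Q(A^\circ)]$ that is polynomial in $a=|A_1^\circ|^2$, $b=|\widehat A|^2$ and $S$. We subtract the target right-hand side
\[
 \Phi_u^{2,0}(A^\circ)\left(a+\frac{(n-1)(n-2)}{2n(n+1)}\Bigl(\frac{2(2n-1)}{n-1}b+S\Bigr)\right).
\]
The claim is that the difference is a sum of nonnegative terms, each a multiple of $b$ or $S$, with at least one strictly positive multiple of $b$. This uses the constraints available from the normalization. The trace bound \eqref{eq:second-trace-bound} gives $a+b\le 9/14$. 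In addition, $S\le1$, and evaluating on suitable two-forms gives $b\lesssim S$-type lower bounds, analogous to the step $4-4|A_1^\circ u|^4\le 8|\widehat A(u,\cdot)|^2$ in Lemma~\ref{lem:first-algebra}. The $a$-only terms should cancel exactly: for $\widehat A=0$ the hypersurface identity holds with equality. The equality statement then follows because a strictly positive coefficient times $|\widehat A|^2$ remains, forcing $\widehat A=0$.

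The main obstacle is the final bookkeeping. We must verify that, for all $n\ge10$ and all admissible $(a,b,S)$ with $a+b\le9/14$ and $0\le S\le1$, the coefficients multiplying $b$ and $S$ stay nonnegative. Since the constants in Lemmas~\ref{lem:second-mixed} and~\ref{lem:second-operator-sum}, such as $13/40$, $11/8$, $3/16$ and $10/9$, were evidently tuned for exactly this purpose, we would first try collecting terms linearly in $S$ and checking each coefficient at the endpoints of the constraint region. If a coefficient fails, we would use $n\ge10$ to bound the $n$-dependent factors $\frac{(n-1)(n-2)}{2n(n+1)}$ and $\frac{2(2n-1)}{n-1}$ by their limiting values.
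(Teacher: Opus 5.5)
Your proposal is essentially the paper's proof: the same normalization \eqref{eq:second-normalization}, the directional term bounded by Lemma~\ref{lem:second-operator-sum}, and the trace term bounded by $|A_1^\circ|^4+\frac{20}{9}|A_1^\circ|^2|\widehat A|^2+\frac32|\widehat A|^4+\frac38|A_1^\circ|^2S$ via Lemma~\ref{lem:second-mixed} and the Li--Li bound (the paper cites Lu's theorem for the same $\frac32$ estimate). The bookkeeping you left open does close, and it needs no $|\widehat A|^2\lesssim S$ relation: with $a=|A_1^\circ|^2$ and $b=|\widehat A|^2$, the $a$ and $a^2$ terms cancel, for $n\ge10$ the coefficients of $b,\ b^2,\ ab,\ bS,\ S$ are at least $\frac3{440},\ \frac1{990},\ \frac{53}{3960},\ \frac2{11},\ \frac1{440}$, and the only possibly negative one, $-\frac1{660}$ on $aS$, is absorbed using $a\le\frac9{14}$ from \eqref{eq:second-trace-bound}.
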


\begin{proof}
It suffices to consider $A^\circ\ne0$. By homogeneity, we may assume that \eqref{eq:second-normalization} holds. Since $\tr(\widehat A_\alpha\widehat A_\beta)=0$ for distinct $\alpha,\beta>1$, \cite[Theorem~3]{Lu} applies to the nonzero matrices $\widehat A_\alpha/|\widehat A_\alpha|$ with weights $|\widehat A_\alpha|^2$. Combining it with Lemma~\ref{lem:second-mixed}, we obtain
\begin{equation}\label{eq:second-scalar}
\begin{aligned}
 &\sum_\alpha\tr(A_\alpha^\circ \Q(A^\circ)_\alpha)\\
 &=\sum_{\alpha,\beta}\left(\tr(A_\alpha^\circ A_\beta^\circ)^2 +|A_\alpha^\circ A_\beta^\circ-A_\beta^\circ A_\alpha^\circ|^2\right)\\
 &\le |A_1^\circ|^4+\frac{20}{9}|A_1^\circ|^2|\widehat A|^2 +\frac32|\widehat A|^4 +\frac38|A_1^\circ|^2 \left((n-4)\sum_j|\widehat A\iota_{e_j}u|^2 +2|\widehat A^{[2]}u|^2\right).
\end{aligned}
\end{equation}

By \eqref{eq:phi2-derivative}, Lemma~\ref{lem:second-operator-sum}, and \eqref{eq:second-scalar},
\begin{equation}\label{eq:second-directional}
\begin{aligned}
 &-\frac12D\Phi_u^{2,0}(A^\circ)[\Q(A^\circ)]\\
 &=\frac{2(2n-1)}{n-1}\sum_\alpha \tr(A_\alpha^\circ\Q(A^\circ)_\alpha)\\
 &\quad+(n-4)\sum_{\alpha,j} \ip{A_\alpha^\circ\iota_{e_j}u}{\Q(A^\circ)_\alpha\iota_{e_j}u} +2\sum_\alpha \ip{(A_\alpha^\circ)^{[2]}u}{(\Q(A^\circ)_\alpha)^{[2]}u}\\
 &\le\frac{2(2n-1)}{n-1} \left(|A_1^\circ|^4+\frac{20}{9}|A_1^\circ|^2|\widehat A|^2 +\frac32|\widehat A|^4\right)\\
 &\quad+|A_1^\circ|^2+ \left(\frac{11}{8}+\frac58|A_1^\circ|^2-\frac12|\widehat A|^2\right) |\widehat A|^2\\
 &\quad+\left(\frac{13}{40} +\frac{26n-11}{20(n-1)}|A_1^\circ|^2 +\frac65|\widehat A|^2\right) \left((n-4)\sum_j|\widehat A\iota_{e_j}u|^2 +2|\widehat A^{[2]}u|^2\right).
\end{aligned}
\end{equation}
Since $n\ge10$,
\[
 4\le\frac{2(2n-1)}{n-1}\le\frac{38}{9},\qquad \frac{(n-1)(n-2)}{2n(n+1)}\ge\frac{18}{55},\qquad \frac{(2n-1)(n-2)}{n(n+1)}\ge\frac{76}{55}.
\]
Using these bounds and $(26n-11)/(20(n-1))\le83/60$ in \eqref{eq:second-directional}, and then \eqref{eq:second-trace-bound}, we obtain
\begin{align*}
 &\frac12D\Phi_u^{2,0}(A^\circ)[\Q(A^\circ)] -\Phi_u^{2,0}(A^\circ) \left(|A_1^\circ|^2 -\frac{(n-1)(n-2)}{2n(n+1)}\Phi_u^{2,0}(\widehat A)\right)\\
 &\ge\left(\frac3{440}+\frac{53}{3960}|A_1^\circ|^2+\frac1{990}|\widehat A|^2\right)|\widehat A|^2\\*
 &\quad+\left(\frac1{440}-\frac1{660}|A_1^\circ|^2+\frac2{11}|\widehat A|^2\right) \left((n-4)\sum_j|\widehat A\iota_{e_j}u|^2+2|\widehat A^{[2]}u|^2\right)\\
 &\ge\frac3{440}|\widehat A|^2 +\left(\frac1{770}+\frac2{11}|\widehat A|^2\right)\cdot\left((n-4)\sum_j|\widehat A\iota_{e_j}u|^2+2|\widehat A^{[2]}u|^2\right).
\end{align*}
The last two terms are nonnegative. Equality forces $\frac3{440}|\widehat A|^2=0$, and hence $\widehat A=0$.
\end{proof}

\begin{proposition}\label{prop:reaction-second}
Let $A\in\Sym^2(V)\otimes N$ with $H>0$, and let $u\in\Lambda^2V$ be a unit two-form such that $\Phi_u^{2,\eta}(A)=\min_{|v|=1}\Phi_v^{2,\eta}(A)$. Then
\[
 D\Phi_u^{2,\eta}(A)[\Q(A)] \ge2(n-1)|A|^2\min\{\Phi_u^{2,\eta}(A),0\}.
\]
If $\Phi_u^{2,\eta}(A)=D\Phi_u^{2,\eta}(A)[\Q(A)]=0$, then $\widehat A=0$.
\end{proposition}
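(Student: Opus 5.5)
The plan is to mirror the proof of Proposition~\ref{prop:reaction-first} exactly, with Lemma~\ref{lem:second-directional} replacing Lemma~\ref{lem:first-algebra}. Write $c_n=\frac{2(n+1)}{(n-1)(n-2)}$, so that $\Phi_u^{2,\eta}(A)=c_n(1-\eta)H^2+\Phi_u^{2,0}(A^\circ)$ (the quadratic part of $\Phi$ sees only $A^\circ$).

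\textbf{Step 1: split the reaction term.} Use \eqref{eq:phi2-derivative} together with \eqref{eq:mean-orthogonal-Q}. The first term of \eqref{eq:phi2-derivative} is $c_n(1-\eta)|A^{\bH}|^2$. For the remaining terms, $\Q(A)^\circ=\Q(A^\circ)+\frac1n(A^{\bH})^\circ\otimes\bH$. The $\Q(A^\circ)$ part gives $\frac12D\Phi_u^{2,0}(A^\circ)[\Q(A^\circ)]$. The second part pairs only with the $\bH$-component $(A^{\bH})^\circ/H^2\otimes\bH$ of $A^\circ$, and yields $\frac{H^2}{n}\bigl[-\Phi_u^{2,0}(A^\circ)+\Phi_u^{2,0}(\widehat A)\bigr]$. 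This uses that the quadratic form splits orthogonally across the normal directions $\bH$ and $\bH^\perp$. Writing $|A^{\bH}|^2/H^2=H^2/n+|(A^{\bH})^\circ|^2/H^2$ and regrouping, in analogy with the first case, should give
\[
 \tfrac12D\Phi_u^{2,\eta}(A)[\Q(A)]=\tfrac{|A^{\bH}|^2}{H^2}\Phi_u^{2,\eta}(A)+\tfrac{H^2}{n}[-\Phi_u^{2,0}(\widehat A)]+\tfrac{|(A^{\bH})^\circ|^2}{H^2}[-\Phi_u^{2,0}(A^\circ)]+\tfrac12D\Phi_u^{2,0}(A^\circ)[\Q(A^\circ)].
\]
I expect the constant $c_n$ times $H^2$ to combine with $\frac1n$ correctly, because $\frac{H^2}{n}\cdot$ (coefficient) must match $c_nH^2\cdot\frac{|A^{\bH}|^2}{H^2}$. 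I will verify this bookkeeping carefully, since this is the step most likely to hide an error. If the regrouping instead produces a factor such as $c_n$ in front of the $\widehat A$ term, I will carry it along; the next step is then adjusted by that factor.

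\textbf{Step 2: insert Lemma~\ref{lem:second-directional}.} The term $\frac{|(A^{\bH})^\circ|^2}{H^2}\Phi_u^{2,0}(A^\circ)$ cancels, and what remains is
\[
 \Bigl(\tfrac{H^2}{n}+\tfrac{(n-1)(n-2)}{2n(n+1)}\Phi_u^{2,0}(A^\circ)\Bigr)[-\Phi_u^{2,0}(\widehat A)]=\tfrac{(n-1)(n-2)}{2n(n+1)}\Phi_u^{2,0}(A)[-\Phi_u^{2,0}(\widehat A)].
\]
The equality holds because $\frac{(n-1)(n-2)}{2n(n+1)}\,c_n=\frac1n$. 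This is exactly why the constant in Lemma~\ref{lem:second-directional} was chosen.

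\textbf{Step 3: conclude.} Use $\Phi_u^{2,0}(A)=\Phi_u^{2,\eta}(A)+\eta c_nH^2\ge\Phi_u^{2,\eta}(A)$ and $-\Phi_u^{2,0}(\widehat A)\ge0$. This gives
\[
 D\Phi_u^{2,\eta}(A)[\Q(A)]\ge2\Bigl(\tfrac{|A^{\bH}|^2}{H^2}+\tfrac{(n-1)(n-2)}{2n(n+1)}[-\Phi_u^{2,0}(\widehat A)]\Bigr)\Phi_u^{2,\eta}(A)
\]
whenever $\Phi_u^{2,\eta}(A)\le0$. When $\Phi_u^{2,\eta}(A)>0$, the left side is at least $0$, since every term is nonnegative. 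For the coefficient, Lemma~\ref{lem:scalar} gives $-\Phi_u^{2,0}(\widehat A)\le\frac{n(n+1)}{n-1}|\widehat A|^2$, so the bracket is at most $|A^{\bH}|^2/H^2+\frac{n-2}{2}|\widehat A|^2\le(n-1)|A|^2$. This yields the claimed bound with $\min\{\Phi_u^{2,\eta}(A),0\}$.

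\textbf{Equality case.} If $\Phi_u^{2,\eta}(A)=0$ and the derivative vanishes, every inequality above is an equality. In particular, equality holds in Lemma~\ref{lem:second-directional}, which forces $\widehat A=0$.
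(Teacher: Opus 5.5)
Your proposal is correct and is essentially the paper's proof. It uses the same four-term splitting of $\frac12D\Phi_u^{2,\eta}(A)[\Q(A)]$, the same cancellation via Lemma~\ref{lem:second-directional} using $\frac{(n-1)(n-2)}{2n(n+1)}\cdot\frac{2(n+1)}{(n-1)(n-2)}=\frac1n$, the same coefficient bound $\frac{n-2}{2}|A|^2\le(n-1)|A|^2$ from Lemma~\ref{lem:scalar}, and the same equality argument. One small slip in Step~1: the contribution of $\frac1n(A^{\bH})^\circ\otimes\bH$ is $\frac{H^2}{n}\bigl[\Phi_u^{2,0}(A^\circ)-\Phi_u^{2,0}(\widehat A)\bigr]$, not its negative. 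With this sign your displayed identity holds exactly, with no extra factor of $c_n$.
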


\begin{proof}
Equations~\eqref{eq:phi2-derivative}, \eqref{eq:mean-orthogonal-Q}, and Lemma~\ref{lem:second-directional} give
\begin{equation}\label{eq:second-sharp-reaction}
\begin{aligned}
 \frac12D\Phi_u^{2,\eta}(A)[\Q(A)] &=\frac{|A^{\bH}|^2}{H^2}\Phi_u^{2,\eta}(A) +\frac{H^2}{n}[-\Phi_u^{2,0}(\widehat A)]\\
 &\quad+\frac{|(A^{\bH})^\circ|^2}{H^2}[-\Phi_u^{2,0}(A^\circ)] +\frac12D\Phi_u^{2,0}(A^\circ)[\Q(A^\circ)]\\
 &\ge\frac{|A^{\bH}|^2}{H^2}\Phi_u^{2,\eta}(A)\\
 &\quad+\left(\frac{H^2}{n} +\frac{(n-1)(n-2)}{2n(n+1)}\Phi_u^{2,0}(A^\circ)\right) [-\Phi_u^{2,0}(\widehat A)]\\
 &=\frac{|A^{\bH}|^2}{H^2}\Phi_u^{2,\eta}(A) +\frac{(n-1)(n-2)}{2n(n+1)} \Phi_u^{2,0}(A)[-\Phi_u^{2,0}(\widehat A)].
\end{aligned}
\end{equation}
Since $\Phi_u^{2,0}(A)=\Phi_u^{2,\eta}(A) +2(n+1)\eta H^2/[(n-1)(n-2)]$ and Lemma~\ref{lem:scalar} gives
\[
 0\le\frac{|A^{\bH}|^2}{H^2} +\frac{(n-1)(n-2)}{2n(n+1)}[-\Phi_u^{2,0}(\widehat A)] \le\frac{|A^{\bH}|^2}{H^2}+\frac{n-2}{2}|\widehat A|^2 \le\frac{n-2}{2}|A|^2,
\]
we conclude that
\begin{align*}
 D\Phi_u^{2,\eta}(A)[\Q(A)] &\ge2\left(\frac{|A^{\bH}|^2}{H^2} +\frac{(n-1)(n-2)}{2n(n+1)}[-\Phi_u^{2,0}(\widehat A)]\right) \Phi_u^{2,\eta}(A)\\
 &\ge(n-2)|A|^2\min\{\Phi_u^{2,\eta}(A),0\} \ge2(n-1)|A|^2\min\{\Phi_u^{2,\eta}(A),0\}.
\end{align*}
If $\Phi_u^{2,\eta}(A)=D\Phi_u^{2,\eta}(A)[\Q(A)]=0$, equality holds in Lemma~\ref{lem:second-directional}, so $\widehat A=0$.
\end{proof}

\subsection{Preservation under mean curvature flow}
The preceding estimates imply preservation under mean curvature flow.

\begin{corollary}\label{cor:preserved}
Let $F:M\times[0,T)\to\R^{n+q}$ be a smooth mean curvature flow of a closed immersion. If $A(\cdot,0)\in\cC_i(\eta)$, then $A(\cdot,t)\in\cC_i(\eta)$ for all $t\in[0,T)$.
\end{corollary}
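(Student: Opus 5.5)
We apply Proposition~\ref{prop:maximum} with $Y=\Sym^2(V)\otimes N$ (trivially preserved), $W$ the unit sphere in $V$ for $i=1$ or the unit sphere in $\Lambda^2V$ for $i=2$, and $\Phi_u=\Phi_u^{i,\eta}$. These families are continuous in $u$, smooth in $A$, and equivariant, and $\cC_i(\eta)$ is exactly the set $\{\min_u\Phi_u\ge0\}$. It therefore suffices to verify \eqref{eq:maximum-hypothesis} with $c=2(n-1)$ whenever $\Phi_u^{i,\eta}(A)=\min_v\Phi_v^{i,\eta}(A)\le0$.

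We split the left-hand side of \eqref{eq:maximum-hypothesis} into the reaction term and the gradient term. For the gradient term, Lemma~\ref{lem:codazzi} gives $-\sum_jD^2\Phi_u^{i,\eta}(A)[Z_j,Z_j]\ge\chi_n|Z|^2\ge0$ for every $Z\in\Sym^3(V)\otimes N$, so it can be discarded. For the reaction term we distinguish two cases according to $H$:
\begin{itemize}
\item If $H=0$, Lemma~\ref{lem:reaction-zero-mean} gives $D\Phi_u^{i,\eta}(A)[\Q(A)]\ge2(n-1)|A|^2\Phi_u^{i,\eta}(A)$ directly.
\item If $H>0$, Propositions~\ref{prop:reaction-first} and~\ref{prop:reaction-second} give $D\Phi_u^{i,\eta}(A)[\Q(A)]\ge2(n-1)|A|^2\min\{\Phi_u^{i,\eta}(A),0\}$. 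At points where $\Phi_u\le0$, the minimum equals $\Phi_u^{i,\eta}(A)$.
\end{itemize}
Note that these propositions require $u$ to realize the minimum over all unit $v$, and this is exactly the hypothesis imposed in Proposition~\ref{prop:maximum}.

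Combining the two terms, \eqref{eq:maximum-hypothesis} holds with $c=2(n-1)$, and Proposition~\ref{prop:maximum} yields preservation of $\cC_i(\eta)$. The only points needing care are that the hypothesis must hold on all of $\{\Phi_u\le0\}$, not just on $\cC$, and that the case $H<0$ cannot occur since $H=|\bH|\ge0$. Both of these are covered above, so there is no real obstacle: the substance lies entirely in the earlier algebraic estimates.
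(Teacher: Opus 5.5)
Your proposal is correct and matches the paper's proof: Proposition~\ref{prop:maximum} with $Y=\Sym^2(V)\otimes N$ and $c=2(n-1)$. The reaction term is handled by Lemma~\ref{lem:reaction-zero-mean} when $H=0$ and by Propositions~\ref{prop:reaction-first} and~\ref{prop:reaction-second} when $H>0$, and the gradient term is nonnegative by Lemma~\ref{lem:codazzi}.
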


\begin{proof}
Let $A\in\Sym^2(V)\otimes N$ and $Z\in\Sym^3(V)\otimes N$. Choose a unit $u$ in the respective space such that $\Phi_u^{i,\eta}(A)=\min_{|v|=1}\Phi_v^{i,\eta}(A)$. Lemma~\ref{lem:reaction-zero-mean}, Propositions~\ref{prop:reaction-first} and~\ref{prop:reaction-second}, and Lemma~\ref{lem:codazzi} give
\begin{equation}\label{eq:preservation-estimate}
 D\Phi_u^{i,\eta}(A)[\Q(A)] -\sum_jD^2\Phi_u^{i,\eta}(A)[Z_j,Z_j] \ge2(n-1)|A|^2\min\{\Phi_u^{i,\eta}(A),0\}.
\end{equation}
Proposition~\ref{prop:maximum} applies with $Y=\Sym^2(V)\otimes N$ and $c=2(n-1)$.
\end{proof}

\begin{proposition}\label{prop:strict}
Let $F:M\times[0,T)\to\R^{n+q}$ be a smooth mean curvature flow of a closed connected immersion with $A(\cdot,0)\in\cC_i$. Then $A(\cdot,t)\in\Int\cC_i$ and $H(\cdot,t)>0$ for every $t\in(0,T)$.
\end{proposition}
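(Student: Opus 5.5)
The plan is to combine the strong minimum principle of Proposition~\ref{prop:maximum-equality} with the rigidity statements in Lemma~\ref{lem:reaction-zero-mean} and Propositions~\ref{prop:reaction-first}, \ref{prop:reaction-second}. First we check the hypothesis \eqref{eq:maximum-equality-hypothesis} for the family $\Phi_u^{i,0}$ on $\cC_i$. If $A\in\cC_i$ and $u$ realizes the minimum, then $\min\{\Phi_u^{i,0}(A),0\}=0$. Hence \eqref{eq:preservation-estimate} gives $D\Phi_u^{i,0}(A)[\Q(A)]-\sum_jD^2\Phi_u^{i,0}(A)[Z_j,Z_j]\ge0$. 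By Corollary~\ref{cor:preserved}, $A(\cdot,t)\in\cC_i$ for all $t$.

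\emph{Step 1: interior.} Suppose $A(p,t_0)\in\partial\cC_i$ for some $t_0>0$, so that $\min_u\Phi_u^{i,0}(A(p,t_0))=0$. Proposition~\ref{prop:maximum-equality} with $t_1=0$ then gives $\min_u\Phi_u^{i,0}(A)\equiv0$ on $M\times(0,t_0]$. Next we extract more from this equality. At any point and any minimizing $u$, the minimum is a smooth-touching quantity in the viscosity sense. The computation in the proof of Proposition~\ref{prop:maximum-equality}, run with $\psi\equiv0$, forces
\[
D\Phi_u[\Q(A)]-\sum_jD^2\Phi_u[\nabla_jA,\nabla_jA]\le0 .
\]
Both terms are nonnegative: the first by the reaction estimates, the second by Lemma~\ref{lem:codazzi}. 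Hence $\nabla A=0$ and $D\Phi_u[\Q(A)]=0$ at every point of $M\times(0,t_0]$. There is one technicality. The evolution of $\Phi_u$ along frames parallel-transported as in Proposition~\ref{prop:maximum} holds only to first and second order at the point. We apply it at a point where the minimum is attained in $u$, using $\psi\equiv 0\le\Phi_u(A)$ with equality, which is legitimate.

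\emph{Step 2: contradiction.} From $\nabla A\equiv0$, both $|A|^2$ and $H$ are spatially constant. If $H=0$ at some point, Lemma~\ref{lem:reaction-zero-mean} gives $A=0$ there, hence everywhere. A closed immersed submanifold of Euclidean space cannot have $A\equiv0$, since it would be totally geodesic, i.e.\ an open subset of an affine plane, which is not compact. So $H>0$, and Propositions~\ref{prop:reaction-first}, \ref{prop:reaction-second} give $\widehat A=0$. Then $A=h\otimes\nu$. We have $\operatorname{rank}h\ge2$, since otherwise $|A|^2=H^2>\frac2nH^2$, contradicting \eqref{eq:coarse}. Lemma~\ref{lem:plane} now shows that $M_t$ is a closed hypersurface in an $(n+1)$-plane with $\nabla A=0$. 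Such a hypersurface is a product $S^k(r)\times\R^{n-k}$, and compactness forces $k=n$, the round sphere. But the round sphere has $A^\circ=0$, so every $\Phi_u^{i,0}=\frac{c_i}{\cdot}H^2>0$. This contradicts $\min_u\Phi_u^{i,0}=0$. Therefore $A(\cdot,t)\in\Int\cC_i$ for $t>0$.

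\emph{Step 3: $H>0$.} Since $0\in\partial\cC_i$, interior points satisfy $A\ne0$. If $H=0$ at an interior point $A$, the first inequality of Lemma~\ref{lem:scalar} gives
\[
\Phi_u^{i,0}(A)=\Phi_u^{i,0}(A^\circ)\le-|A^\circ|^2<0,
\]
which is impossible. Hence $H>0$ for $t>0$.

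The main obstacle I expect is Step 1, namely justifying pointwise that equality of the minimum forces both the gradient term and the reaction term to vanish. This requires handling the nonsmooth minimum over $u$. I would first try the direct argument with the fixed minimizing $u$, transported in frames. This yields a smooth function $\Phi_u(A)\ge0$ that vanishes at the point, so the point is a local spatial minimum and a backward-in-time minimum. Hence $(\partial_t-\Delta)\Phi_u(A)\le0$ there.
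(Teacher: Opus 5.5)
Your proposal is correct, and Steps 1 and 3 match the paper. The strong minimum principle gives $\min_u\Phi_u^{i,0}(A)\equiv0$ on $M\times(0,t_0]$. Testing with a transported minimizing $u$ then forces $\nabla A=0$ and $D\Phi_u^{i,0}(A)[\Q(A)]=0$. Finally, $H>0$ at interior points follows from Lemma~\ref{lem:scalar}, which is equivalent to the paper's use of \eqref{eq:coarse}.

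You differ from the paper in the rigidity step. The paper first proves $A(v,v)\ne0$ for every unit tangent $v$: since $\nabla A=0$, the geodesic in a direction with $A(v,v)=0$ is mapped to a unit-speed straight line, which is impossible in a compact image. That one observation gives $H>0$ via \eqref{eq:coarse} and makes every eigenvalue of $A^{\bH}/H$ nonzero. So once $\widehat A=0$, the identity $\lambda_j\lambda_k(\lambda_j-\lambda_k)=0$ (from commuting derivatives and the Gauss equation) yields umbilicity directly, in arbitrary codimension.

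You instead rule out $H=0$ using the equality case of Lemma~\ref{lem:reaction-zero-mean} and the fact that a closed immersion cannot be totally geodesic. You then check $\operatorname{rank}h\ge2$, reduce to a hypersurface with Lemma~\ref{lem:plane}, and appeal to the classification of hypersurfaces with parallel second fundamental form. This is valid, but it imports an external classification, and you should add a sentence on why compactness forces $k=n$. One option is that the immersion is a local isometry onto the complete model $S^k(r)\times\R^{n-k}$, hence a covering. More simply, the principal curvatures are constant and all nonzero at a point of maximal distance from the origin, so the Gauss identity makes them all equal. The paper's route is self-contained and does not need Lemma~\ref{lem:plane} at all.
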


\begin{proof}
By Corollary~\ref{cor:preserved}, $A\in\cC_i$ throughout the flow. Suppose that $A(p_0,t_0)\in\partial\cC_i$ at some $(p_0,t_0)\in M\times(0,T)$. Since $M$ is connected, \eqref{eq:preservation-estimate} and Proposition~\ref{prop:maximum-equality} give
\[
 \min_{|u|=1}\Phi_u^{i,0}(A)=0 \quad\text{on }M\times(0,t_0].
\]
Fix $(p,t)\in M\times(0,t_0)$ and a unit $u$ such that $\Phi_u^{i,0}(A(p,t))=0$. Extend $u$ by parallel transport in time at $p$, and then along the radial geodesics from $p$ at each time. This gives $\nabla_tu=\nabla u=\Delta u=0$ at $(p,t)$. Since $\Phi_u^{i,0}(A)$ is nonnegative and vanishes at $(p,t)$, the chain rule, \eqref{eq:evolution}, Lemma~\ref{lem:codazzi}, and \eqref{eq:preservation-estimate} with $Z=0$ give
\begin{align*}
 0&\ge(\partial_t-\Delta)\Phi_u^{i,0}(A)\\
 &=D\Phi_u^{i,0}(A)[\Q(A)] -\sum_jD^2\Phi_u^{i,0}(A)[\nabla_jA,\nabla_jA]\\
 &\ge D\Phi_u^{i,0}(A)[\Q(A)]+\chi_n|\nabla A|^2\ge0.
\end{align*}
Thus $\nabla A=0$ on $M\times(0,t_0)$, and $D\Phi_u^{i,0}(A)[\Q(A)]=0$ for every unit $u$ with $\Phi_u^{i,0}(A)=0$.

We next show that $A(v,v)\ne0$ for every unit tangent vector $v$ at every time $t\in(0,t_0)$. Suppose instead that $A_p(v,v)=0$, and let $\gamma:\R\to M$ be the unit-speed $g(t)$-geodesic with $\gamma(0)=p$ and $\dot\gamma(0)=v$. Since $M$ is closed, $\gamma$ is defined for all $s\in\R$. The identities $\nabla A=0$ and $\nabla_{\dot\gamma}\dot\gamma=0$ give
\[
 \nabla^\perp_{\dot\gamma}\bigl(A(\dot\gamma,\dot\gamma)\bigr) =(\nabla_{\dot\gamma}A)(\dot\gamma,\dot\gamma) +2A(\nabla_{\dot\gamma}\dot\gamma,\dot\gamma)=0.
\]
Hence $|A(\dot\gamma(s),\dot\gamma(s))|=|A_p(v,v)|=0$ for all $s\in\R$. The Gauss formula gives
\[
 \frac{d^2}{ds^2}F(\gamma(s),t) =A(\dot\gamma(s),\dot\gamma(s))=0,\qquad \left|\frac d{ds}F(\gamma(s),t)\right|=1.
\]
Thus the compact set $F(M,t)$ contains the unit-speed straight line $F(\gamma(s),t)$, a contradiction. We have proved $A(v,v)\ne0$. It follows from \eqref{eq:coarse} that $H>0$ on $M\times(0,t_0)$, and the equality statement in Proposition~\ref{prop:reaction-first} or~\ref{prop:reaction-second} gives $\widehat A=0$.

Choose an orthonormal eigenbasis $e_1,\ldots,e_n$ of $A^{\bH}/H$. Then $A(e_j,e_k)=\lambda_j\delta_{jk}\bH/H$, with $\lambda_j\ne0$. The endomorphism $A^{\bH}/H$ is parallel. Commuting its covariant derivatives and using the Gauss equation gives
\[
 \lambda_j\lambda_k(\lambda_j-\lambda_k)=0\qquad(j\ne k).
\]
Since each $\lambda_j$ is nonzero, all $\lambda_j$ agree. Hence $A^\circ=0$ and $\Phi_u^{i,0}(A)\ge H^2/(n-1)>0$, a contradiction. Therefore $A\in\Int\cC_i$ at positive times. If $H(p,t)=0$ for some $t>0$, then \eqref{eq:coarse} gives $A(p,t)=0\in\partial\cC_i$, again a contradiction.
\end{proof}

\begin{corollary}\label{cor:margin}
Let $F:M\times[0,T)\to\R^{n+q}$ be a smooth mean curvature flow of a closed immersion with $A(\cdot,0)\in\cC_i$. For every $\tau\in(0,T)$, there is $\eta\in(0,1/2]$ such that $A(\cdot,t)\in\cC_i(\eta)$ for all $t\in[\tau,T)$.
\end{corollary}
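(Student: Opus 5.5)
The plan is to combine compactness of $M$, Proposition~\ref{prop:strict} at the single time $\tau$, and the preservation result Corollary~\ref{cor:preserved} for the family $\cC_i(\eta)$. Fix $\tau\in(0,T)$. If $M$ is not connected, argue on each connected component; there are finitely many since $M$ is closed, so we can take the minimum of the resulting values of $\eta$. We may therefore assume $M$ is connected. By Proposition~\ref{prop:strict}, $A(\cdot,\tau)\in\Int\cC_i$ and $H(\cdot,\tau)>0$ on $M$.

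The first step is a quantitative margin at time $\tau$. Consider the function
\[
 m(p)=\min_{|u|=1}\Phi_u^{i,0}(A(p,\tau))
\]
on $M$. The family $\Phi_u^{i,0}$ is continuous in $(u,A)$ and the unit sphere in $V$ or $\Lambda^2V$ is compact, so $m$ is continuous; by equivariance it is well defined independent of frames. Interior points of $\cC_i$ should satisfy $m>0$. To see this, note that if $\min_u\Phi_u^{i,0}(A)=0$, then $A$ lies on the boundary of $\cC_i$: for $s>1$ close to $1$, the perturbation $A+(s-1)\Id\otimes \bH/n$ changes only $H$, and we can instead use $A-\delta\Id\otimes\bH/n$, which decreases $H^2$ and hence makes $\Phi_u^{i,0}<0$, placing nearby points outside $\cC_i$. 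Since $M$ is compact, $m\ge m_0>0$. Moreover $H(\cdot,\tau)\le H_0<\infty$. Using
\[
 \Phi_u^{i,\eta}=\Phi_u^{i,0}-\eta c_iH^2
\]
with $c_1=1/(n-1)$ and $c_2=2(n+1)/[(n-1)(n-2)]$, we choose $\eta=\min\{1/2,\,m_0/(c_iH_0^2)\}$. Then $\Phi_u^{i,\eta}(A(p,\tau))\ge0$ for every unit $u$ and every $p$, so $A(\cdot,\tau)\in\cC_i(\eta)$.

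The second step applies Corollary~\ref{cor:preserved} to the time-translated flow $t\mapsto F(\cdot,\tau+t)$, which is again a smooth mean curvature flow of a closed immersion. This gives $A(\cdot,t)\in\cC_i(\eta)$ for all $t\in[\tau,T)$.

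The only delicate point is the claim that $\Int\cC_i$ corresponds to strict positivity of the minimum. I will sidestep the argument above by a simpler scaling observation. If $m(p)=0$ at $A=A(p,\tau)$ with $H>0$, then $A_\epsilon=A-\epsilon\Id\otimes\bH/H$ has the same traceless part but smaller $H$. Hence $\Phi_u^{i,0}(A_\epsilon)<\Phi_u^{i,0}(A)$ for the minimizing $u$, which gives $A_\epsilon\notin\cC_i$ for all small $\epsilon>0$ and so $A\in\partial\cC_i$, contradicting Proposition~\ref{prop:strict}. Everything else is routine compactness.
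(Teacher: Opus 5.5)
Your proposal is correct and follows the paper's proof: Proposition~\ref{prop:strict} at time $\tau$ gives $H>0$ and a strictly positive minimum of $\Phi_u^{i,0}$. Compactness of $M$ together with $\Phi_u^{i,\eta}=\Phi_u^{i,0}-\eta c_iH^2$ then yields a uniform $\eta$, and Corollary~\ref{cor:preserved} applied from time $\tau$ finishes. Your extra check that $A\in\Int\cC_i$ forces $\min_u\Phi_u^{i,0}(A)>0$ is valid: the shift $A-\epsilon\Id\otimes\bH/H$ fixes $A^\circ$ and lowers $H$. The paper uses this identification implicitly.
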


\begin{proof}
Fix $\tau\in(0,T)$. Proposition~\ref{prop:strict}, applied to each connected component of $M$, gives $H>0$ and $\Phi_u^{i,0}(A)>0$ at time $\tau$ for every $p\in M$ and every unit $u$. By compactness, there is $\eta\in(0,1/2]$ such that
\[
 \frac{\Phi_u^{i,\eta}(A)}{H^2} \ge\frac{\Phi_u^{i,0}(A)}{H^2} -\frac{2(n+1)\eta}{(n-1)(n-2)}>0 \qquad\text{at }t=\tau.
\]
Thus $A\in\cC_i(\eta)$ at time $\tau$, and Corollary~\ref{cor:preserved} gives the conclusion.
\end{proof}

\subsection{Derivative estimates}

\begin{proposition}\label{prop:derivatives}
Let $F:M\times[0,T)\to\R^{n+q}$ be a smooth mean curvature flow of a closed immersion with $A(\cdot,0)\in\cC_i$ and $H(\cdot,0)>0$. There are constants $C_1,C_2,C$, depending only on the initial immersion, such that on $M\times[0,T)$,
\begin{equation}\label{eq:derivatives}
 |\nabla A|\le C_1H^2,\qquad |\nabla^2A|\le C_2H^3,\qquad |\nabla H^{-1}|+|\partial_tH^{-2}|\le C.
\end{equation}
\end{proposition}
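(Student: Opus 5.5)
We follow the classical Huisken/Andrews--Baker gradient estimate strategy, adapted to the pinching available here. The only inputs are preservation of $\cC_i$ (Corollary~\ref{cor:preserved}), the coarse bound \eqref{eq:coarse}, and the Kato-type inequality of Lemma~\ref{lem:codazzi}. First we establish $H\ge c_0>0$ on $M\times[0,T)$. Since $A\in\cC_i$ throughout, \eqref{eq:coarse} gives $|A|^2\le\frac2nH^2$. Equation \eqref{eq:mean-evolution} then gives
\[
(\partial_t-\Delta)H\ge\frac{|A^{\bH}|^2}{H^2}H-\frac{|\nabla^\perp\bH|^2}{H}+\frac{|\nabla H|^2}{H},
\]
and the last two terms combine to $-|\nabla^\perp\nu|^2H\le0$, which is awkward. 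We will instead use the standard fact (Andrews--Baker) that $(\partial_t-\Delta)H^2\ge -2|\nabla \bH|^2+\frac2n H^4$, together with $|\nabla\bH|^2\le\frac{n+2}{3}|\nabla A|^2$. We handle the gradient term by combining $H^2$ with a multiple of the pinching quantity, using the $\chi_n|\nabla A|^2$ from Lemma~\ref{lem:codazzi}. Concretely, for $\Phi_u=\Phi^{i,0}_u$ the quantity $G=\min_u\Phi_u(A)\ge0$ satisfies $(\partial_t-\Delta)G\ge\chi_n|\nabla A|^2-2(n-1)|A|^2\cdot0$ in the viscosity sense by \eqref{eq:preservation-estimate}. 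Since $H^2\ge \frac{n-1}{1}G$ dominates, a positive lower bound on $H$ follows from the minimum principle applied to $H^2+K G$ or simply to $G$ when $G>0$ initially. Alternatively, we treat $H>0$ as preserved with $\min H$ nondecreasing, via the Andrews--Baker computation under $|A|^2\le\frac2nH^2$.

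The main step is the gradient estimate $|\nabla A|^2\le C_1^2H^4$. Following Huisken's argument (and \cite[Section~5]{AB}), we consider
\[
f=\frac{|\nabla A|^2}{H}+N_1\,\frac{|\nabla A|^2}{?}\ \text{-type combination, namely}\quad f=|\nabla A|^2+ (N_1+N_2|A|^2)\,\Psi ,
\]
where $\Psi:=\frac{2}{n}H^2-|A|^2\ge c|A^\circ|$-type nonnegative quantity. Here we use $\Psi\ge0$, or better the strict quantity $\frac{2(1-\eta)}{n}H^2-|A|^2$ at positive times via Corollary~\ref{cor:margin}, and treat the interval $[0,\tau]$ by compactness. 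The evolution of $\Psi$ gives $(\partial_t-\Delta)\Psi\ge 2\delta|\nabla A|^2-C|A|^2\Psi$ for some $\delta>0$, since $|\nabla\bH|^2\le\frac{n+2}{3}|\nabla A|^2$ and $\frac2n\cdot\frac{n+2}{3}<1$. Meanwhile $(\partial_t-\Delta)|\nabla A|^2\le-2|\nabla^2A|^2+C|A|^2|\nabla A|^2$. Choosing $N_1,N_2$ large, the bad terms $C|A|^2|\nabla A|^2$ are absorbed by $N_2|A|^2\cdot\delta|\nabla A|^2$, and cross terms $\nabla|A|^2\cdot\nabla\Psi$ by $|\nabla^2A|^2$ and the good terms. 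We obtain $(\partial_t-\Delta)f\le C'|A|^6$, bounded by $C'H^6$, up to a negative multiple of $H^2 f$ once $f$ is large. Comparing $f$ with $C H^4$, whose evolution $(\partial_t-\Delta)H^4\ge c H^6-C|\nabla A|^2H^2$ is favorable after using $\Psi$ again, the maximum principle yields $f\le C_1^2H^4$. The main obstacle is the reaction term in $\Psi$, because $\Psi$ is not itself preserved in codimension $>1$ beyond $4/(3n)$. If $\Psi$ fails, we replace it by $G=\min_u\Phi_u^{i,0}(A)$, which is preserved with the gradient gain $\chi_n|\nabla A|^2$ from Lemma~\ref{lem:codazzi}, at the cost of working with a viscosity supersolution; in that case all computations are done at the minimizing $u$ as in Proposition~\ref{prop:maximum}.

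The second-derivative bound $|\nabla^2A|\le C_2H^3$ follows by the same scheme. We take $|\nabla^2A|^2+N|\nabla A|^2(\ldots)$ with the already established $|\nabla A|\le C_1H^2$, using the $-2|\nabla^3A|^2$ and $-2|\nabla^2A|^2$ good terms. Finally, $|\nabla H^{-1}|=H^{-2}|\nabla H|\le H^{-2}|\nabla \bH|\le\sqrt n\,C_1$ by Kato. Also $\partial_tH^{-2}=-2H^{-4}H\partial_tH$ with $\partial_t H=\Delta H+(\ldots)$. Here $|\Delta H|\le C(|\nabla^2A|+|\nabla A|^2/H)\le CH^3$ and the reaction term is $O(H^3)$ by \eqref{eq:coarse}, so $|\partial_tH^{-2}|\le C$.
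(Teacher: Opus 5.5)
Your proposal has a genuine gap in the main step, the bound $|\nabla A|\le C_1H^2$. The additive Huisken/Andrews--Baker combination $f=|\nabla A|^2+(N_1+N_2|A|^2)\Psi$ does not close.

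First, the sign is reversed. $(\partial_t-\Delta)\Psi$ contains the \emph{positive} term $2\delta|\nabla A|^2$. Adding $\Psi$ with a positive coefficient to a quantity you want to bound from above therefore contributes a bad term, not a good one.

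Second, the scheme still fails after flipping the sign. The product then produces $-N_2\Psi(\partial_t-\Delta)|A|^2\ni 2N_2\Psi|\nabla A|^2$, together with a cross term $2N_2\nabla|A|^2\cdot\nabla\Psi$. Both are of the same order as the good term $-2\delta N_2|A|^2|\nabla A|^2$, and they can dominate it. At nearly umbilic points $\Psi\approx\frac1nH^2\approx|A|^2$, while $\delta=\frac{n-4}{3n}<\frac13$. In Huisken's and Andrews--Baker's settings such cross terms are lower order because the pinching estimate $|A^\circ|^2\le CH^{2-\sigma}$ is available. No such estimate is available at this stage, and for $\cC_2$ it is false, since cylinders occur.

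Third, the supporting claims are not secured. The inequality $(\partial_t-\Delta)\Psi\ge2\delta|\nabla A|^2-C|A|^2\Psi$ is unjustified, since it would need a signed reaction term on $\{\Psi=0\}$. The quantity $\Psi=\frac2nH^2-|A|^2$ may vanish at $t=0$. Your proposed margin $\frac{2(1-\eta)}nH^2-|A|^2$ goes the wrong way: on $\cC_i(\eta)$ one only has $|A|^2\le\frac{2-\eta}nH^2$, so that quantity need not be nonnegative. Your fallback to $\min_u\Phi_u^{i,0}(A)$ inherits the same sign and absorption problems.

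The missing idea is to use a scale-invariant quotient, and to raise rather than lower the coefficient of $H^2$. The paper takes $G=2H^2-(n-1)|A|^2$; any $cH^2-|A|^2$ with $\frac2n<c<\frac3{n+2}$ would do. Then \eqref{eq:coarse} alone gives $|A|^2\le G\le2H^2$. So neither preservation of $G$ nor a sign on its reaction term is needed:
\[
 (\partial_t-\Delta)G\ge2(n-1)|\nabla A|^2-4|\nabla\bH|^2-C|A|^4\ge\frac{2(n-7)}3|\nabla A|^2-CG^2.
\]
Set $f=|\nabla A|^2/G^2$ and $\mathcal L=\partial_t-\Delta-2\nabla\log G\cdot\nabla$. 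The quotient rule gives
\[
 \mathcal Lf\le-2|\nabla(\nabla A/G)|^2+Gf\bigl(C-\tfrac{4(n-7)}3f\bigr).
\]
The drift term absorbs the $|\nabla G|^2$ contributions, and the gradient gain of $G$ enters quadratically in $f$. The maximum principle then bounds $f$ without any smallness. The second-derivative bound follows in the same way from $h+Kf$ with $h=|\nabla^2A|^2/G^3$.

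So the reaction term of $\Psi$, which you identify as the main obstacle, is harmless; the real obstruction is your absorption scheme. The preliminary lower bound $H\ge c_0$ is unnecessary, since positivity of $H$ from Proposition~\ref{prop:strict} suffices. Your final estimates for $|\nabla H^{-1}|$ and $|\partial_tH^{-2}|$ are fine once the first two bounds are known.
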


\begin{proof}
Proposition~\ref{prop:strict}, applied to each connected component, gives $H>0$ throughout the flow. Set $G=2H^2-(n-1)|A|^2$. By \eqref{eq:coarse}, $0<|A|^2\le G\le2H^2$. The evolution equations and \eqref{eq:codazzi-trace} give
\begin{align*}
 (\partial_t-\Delta)G &\ge2(n-1)|\nabla A|^2-4|\nabla\bH|^2-C|A|^4 \ge\frac{2(n-7)}3|\nabla A|^2-CG^2.
\end{align*}
Set $\mathcal L=\partial_t-\Delta-2\nabla\log G\cdot\nabla$ and $f=|\nabla A|^2/G^2$. The evolution equations in \cite[Proposition~9]{AB} and the quotient rule give
\[
 \mathcal Lf \le-2\left|\nabla\!\left(\frac{\nabla A}{G}\right)\right|^2 +Gf\left(C-\frac{4(n-7)}3f\right).
\]
By the maximum principle, $f\le\max\{\max_M f(\cdot,0),3C/[4(n-7)]\}$. In particular, $|\nabla\log G|^2\le C|\nabla A|^2/G\le CG$. Set $h=|\nabla^2A|^2/G^3$ and choose $K>0$ sufficiently large. Using $|\nabla\log G|^2\le CG$, the evolution equations, and Young's inequality, we obtain
\begin{align*}
 \mathcal L(h+Kf) &\le-\frac{2|\nabla^3A|^2}{G^3} +\frac{8|\nabla\log G|\,|\nabla^3A|\,|\nabla^2A|}{G^3}-2K\left|\nabla\!\left(\frac{\nabla A}{G}\right)\right|^2+CGh+C(1+K)G\\
 &\le G(C_K-h-Kf).
\end{align*}
Applying the maximum principle, we obtain
\[
 0\le h\le h+Kf \le\max\left\{C_K,\max_M(h+Kf)(\cdot,0)\right\}.
\]
Since $G\le2H^2$, this proves the first two inequalities in \eqref{eq:derivatives}. The identities $\nabla^\perp\bH=\tr(\nabla A)$ and $\Delta^\perp\bH=\tr(\Delta A)$, together with \eqref{eq:mean-evolution}, give
\begin{align*}
 |\nabla H^{-1}| &\le\frac{|\nabla^\perp\bH|}{H^2} \le\frac{\sqrt n\,|\nabla A|}{H^2}\le C,\\
 |\partial_tH^{-2}| &\le\frac2{H^3}\bigl(|\Delta^\perp\bH|+|A|^2H\bigr) \le\frac{2n|\nabla^2A|}{H^3}+\frac{2|A|^2}{H^2}\le C.
\end{align*}
\end{proof}

\begin{corollary}\label{cor:local}
Under the hypotheses of Proposition~\ref{prop:derivatives}, there are constants $r,\theta>0$ with the following property. If $(p,t)\in M\times[0,T)$, $\Lambda=H(p,t)$, and $t\ge\theta\Lambda^{-2}$, then
\[
 \tfrac12\Lambda\le H\le2\Lambda \quad\text{on }B_{g(t)}(p,r/\Lambda)\times [t-\theta/\Lambda^2,t].
\]
For each $k\ge0$,
\[
 |\nabla^kA|\le C_k\Lambda^{k+1} \quad\text{on }B_{g(t)}\!\left(p,\frac r{2\Lambda}\right) \times\left[t-\frac\theta{2\Lambda^2},t\right].
\]
The constants depend only on the initial immersion and $k$.
\end{corollary}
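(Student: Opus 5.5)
The plan is to combine the scale-invariant derivative bounds of Proposition~\ref{prop:derivatives} with a standard continuity (ODE-type) argument for $H$, and then use interior parabolic estimates for the higher derivatives.

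\textbf{Step 1: control of $H$ in time at the point $p$.} Since $|\partial_tH^{-2}|\le C$, for $s\in[t-\theta/\Lambda^2,t]$ we get
\[
|H(p,s)^{-2}-\Lambda^{-2}|\le C\theta\Lambda^{-2}.
\]
Choosing $\theta\le 1/(2C)$, say, yields $H(p,s)^{-2}\in[\tfrac12\Lambda^{-2},\tfrac32\Lambda^{-2}]$, so $H(p,s)$ is comparable to $\Lambda$. The hypothesis $t\ge\theta\Lambda^{-2}$ guarantees that the interval lies in $[0,T)$.

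\textbf{Step 2: spatial control on the ball.} We must control $H$ on $B_{g(t)}(p,r/\Lambda)$ at earlier times $s$, so the bound on $|\nabla H^{-1}|$ has to be integrated along paths at time $s$. For this I will compare $g(s)$ with $g(t)$ using $\partial_tg=-2\langle A,\bH\rangle$, together with $|A|\le\sqrt{2/n}\,H$ from \eqref{eq:coarse}.

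I would run a continuity argument on the largest set where $H\le 2\Lambda$ (more precisely $\le 4\Lambda$ initially). On that set,
\[
|\partial_t\log g|\le CH^2\le C\Lambda^2,
\]
so over a time interval of length $\theta/\Lambda^2$ the metrics are uniformly equivalent. Hence a $g(t)$-geodesic of length $r/\Lambda$ has $g(s)$-length at most $2r/\Lambda$.

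Integrating $|\nabla H^{-1}|\le C$ along this path from $(p,s)$ gives
\[
|H^{-1}(x,s)-H^{-1}(p,s)|\le 2Cr/\Lambda.
\]
With $r$ small this keeps $H\in[\tfrac12\Lambda,2\Lambda]$ after adjusting the constants from Step 1, which closes the continuity argument.

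\textbf{Main obstacle.} The delicate point is this circularity: metric equivalence needs the curvature bound, and the curvature bound needs metric equivalence. I would handle it by a first-exit-time argument in $s$, running backward from $t$. At $s=t$ the bound holds by the $|\nabla H^{-1}|$ integration alone. Moreover, the strict inequalities obtained (e.g.\ $H\in[\tfrac23\Lambda,\tfrac32\Lambda]$) prevent the bound from failing at the first exit time.

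\textbf{Step 3: higher derivatives.} Using $|A|\le CH\le C\Lambda$ on the parabolic neighbourhood, rescale by $\Lambda$ to obtain a flow with bounded curvature on a unit-size parabolic cylinder. The cases $k=1,2$ are already provided by \eqref{eq:derivatives}. For $k\ge3$ I would apply the standard local (Ecker--Huisken type) interior derivative estimates for mean curvature flow in higher codimension, which are valid on half the cylinder. Undoing the scaling gives $|\nabla^kA|\le C_k\Lambda^{k+1}$ on $B_{g(t)}(p,r/(2\Lambda))\times[t-\theta/(2\Lambda^2),t]$.

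All constants depend only on $C$ from Proposition~\ref{prop:derivatives}, on $n$, and on $k$, hence only on the initial immersion and $k$.
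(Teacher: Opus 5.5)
Your proof is correct. It uses the same ingredients as the paper, but orders the integrations differently, and the ``main obstacle'' you identify comes only from that ordering.

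\textbf{The paper's route.} For $x\in B_{g(t)}(p,r/\Lambda)$, the paper first integrates $|\nabla H^{-1}|\le C$ along a minimizing $g(t)$-geodesic at the single time $t$. This is where the ball is defined and where the gradient bound is measured in the correct metric, and it gives $|\Lambda/H(x,t)-1|\le Cr$. It then integrates $|\partial_\sigma H^{-2}|\le C$ in time at the fixed point $x$, which gives $\Lambda^2|H(x,s)^{-2}-H(x,t)^{-2}|\le C\theta$. With $Cr,C\theta\le 1/4$ this yields $\tfrac14\le\Lambda^2H(x,s)^{-2}\le 4$ directly, with no metric comparison and no continuity argument. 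The equivalence $e^{-C\theta}g(t)\le g(s)\le e^{C\theta}g(t)$ is derived only afterwards, from the already established bound $H\le 2\Lambda$ and $|A^{\bH}|\le H|A|$. It is used solely to feed the interior derivative estimates.

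\textbf{Your route.} You integrate in time at $p$ first and then in space at time $s$. This forces you to compare $g(s)$-lengths with $g(t)$-lengths before you know $H$ is bounded, hence the backward first-exit bootstrap. That bootstrap does close:
\begin{itemize}
\item the minimizing $g(t)$-geodesic from $p$ to $x$ stays in the ball;
\item the right bootstrap set is the product $\overline{B_{g(t)}(p,r/\Lambda)}\times[s_*,t]$, which is compact since $M$ is closed;
\item the improvement from the threshold $4\Lambda$ to $2\Lambda$ is strict.
\end{itemize}
So nothing is wrong, but the extra work buys nothing beyond the paper's two-line estimate.

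\textbf{Step 3.} This matches the paper: rescale by $\Lambda$, use the curvature and metric bounds, and apply local interior estimates. The paper cites the proof of Lemma~3.4 in Naff's work for this. The cases $k=0,1,2$ can also be read off directly from \eqref{eq:coarse} and \eqref{eq:derivatives}.
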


\begin{proof}
Let $C$ be the constant in Proposition~\ref{prop:derivatives}, increased if necessary, and choose $r,\theta>0$ such that $Cr\le1/4$ and $C\theta\le1/4$. Fix $x\in B_{g(t)}(p,r/\Lambda)$ and $s\in[t-\theta/\Lambda^2,t]$, and let $\gamma$ be a minimizing $g(t)$-geodesic from $p$ to $x$. Proposition~\ref{prop:derivatives} gives
\begin{align*}
 \left|\frac{\Lambda}{H(x,t)}-1\right| &\le\Lambda\int_\gamma|\nabla H^{-1}(\cdot,t)|\,d\ell \le C\Lambda\,d_{g(t)}(p,x)\le Cr,\\
 \Lambda^2\left|H(x,s)^{-2}-H(x,t)^{-2}\right| &\le\Lambda^2\int_s^t |\partial_\sigma H(x,\sigma)^{-2}|\,d\sigma \le C\Lambda^2(t-s)\le C\theta.
\end{align*}
Consequently,
\[
 \frac14\le(1-Cr)^2-C\theta \le\Lambda^2H(x,s)^{-2} \le(1+Cr)^2+C\theta\le4,
\]
which proves $\Lambda/2\le H(x,s)\le2\Lambda$. By \eqref{eq:coarse}, $|A|^2\le8\Lambda^2/n$ on $B_{g(t)}(p,r/\Lambda)\times[t-\theta/\Lambda^2,t]$. Since $|A^{\bH}|\le H|A|\le C\Lambda^2$, the metric evolution $\partial_sg=-2A^{\bH}$ gives
\[
 e^{-C\theta}g(t)\le g(s)\le e^{C\theta}g(t) \quad\text{on }B_{g(t)}(p,r/\Lambda).
\]
After rescaling space by $\Lambda$ and time by $\Lambda^2$, the curvature and metric bounds above allow us to apply the interior derivative estimates for mean curvature flow on $B_{g(t)}(p,r/(2\Lambda))\times[t-\theta/(2\Lambda^2),t]$; see \cite[proof of Lemma~3.4]{NaffCanonical}. This proves the remaining estimates.
\end{proof}



\section{Deforming the second fundamental form}\label{sec:deformation}
Throughout this section, we assume that $n\ge10$, $i\in\{1,2\}$, $0<\eta\le1/2$, and $H>0$. Following the construction of the linear transformations $\ell_{a,b}$ in Ricci flow \cite{BW}, we define
\begin{equation}\label{eq:theta}
 \Theta_s(A)=\widehat A+s\frac{A^{\bH}}{H^2}\otimes\bH \qquad(s>0).
\end{equation}
Since $\tr\Theta_s(A)=s\bH$ and $\widehat{\Theta_s(A)}=\widehat A$, we have
\[
 \Theta_s^{-1}(A)=\Theta_{1/s}(A) =\widehat A+\frac{A^{\bH}}{sH^2}\otimes\bH.
\]
For $s\ge1$, we define the family of curvature cones
\[
 \cC_{i,\eta}(s)=\Theta_s\bigl(\cC_i(\eta)\cap\{H>0\}\bigr)=\{A:H>0,\ \Theta_s^{-1}(A)\in\cC_i(\eta)\}.
\]
As before, $u$ is a unit vector in $V$ if $i=1$, and a unit two-form in $\Lambda^2V$ if $i=2$.

\begin{lemma}\label{lem:family}
Let $s\ge1$. The set $\cC_{i,\eta}(s)$ is invariant under the natural $O(V)\times O(N)$ action and positive scalar multiplication, and satisfies $s^2|\widehat A|^2\le2H^2/n$ for every $A\in\cC_{i,\eta}(s)$.
\end{lemma}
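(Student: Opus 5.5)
The proof will rest on two equivariance properties of $\Theta_s$, together with the bound \eqref{eq:coarse} from Lemma~\ref{lem:scalar}. For $(g,h)\in O(V)\times O(N)$ acting on $A$, we have $\bH(gA)=h\bH$ and $H(gA)=H$. We also have $(gA)^{\bH}=\ip{gA}{h\bH}_N$, which is the conjugate of $A^{\bH}$ by $g$. It follows that the decomposition $A=(A^{\bH}/H^2)\otimes\bH+\widehat A$ transforms equivariantly, so $\Theta_s(gA)=g\Theta_s(A)$ for every $s>0$; the same holds for $\Theta_s^{-1}=\Theta_{1/s}$. For $\lambda>0$, we have $\bH(\lambda A)=\lambda\bH$ and $H(\lambda A)=\lambda H$. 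Moreover $(\lambda A)^{\bH}/H(\lambda A)^2\otimes\bH(\lambda A)=\lambda(A^{\bH}/H^2)\otimes\bH$, and $\widehat{\lambda A}=\lambda\widehat A$. Hence $\Theta_s$ is positively homogeneous of degree one. Both facts require $H>0$, which is part of the definition of $\cC_{i,\eta}(s)$.

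Invariance then follows from the description $\cC_{i,\eta}(s)=\{A:H>0,\ \Theta_s^{-1}(A)\in\cC_i(\eta)\}$. The condition $H>0$ is invariant under both operations. The set $\cC_i(\eta)$ is $O(V)\times O(N)$-invariant, because the family $\Phi^{i,\eta}_u$ is equivariant. It is invariant under positive scaling, because each $\Phi^{i,\eta}_u$ is a quadratic form. Combining these with $\Theta_{1/s}(gA)=g\Theta_{1/s}(A)$ and $\Theta_{1/s}(\lambda A)=\lambda\Theta_{1/s}(A)$ gives invariance of $\cC_{i,\eta}(s)$ under both operations.

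For the bound, let $A\in\cC_{i,\eta}(s)$ and set $B=\Theta_{1/s}(A)\in\cC_i(\eta)$. By construction $\widehat B=\widehat A$ and $\bH(B)=s^{-1}\bH$, so $H(B)=H/s>0$. Applying \eqref{eq:coarse} to $B$ gives $|B|^2\le\frac2nH(B)^2=\frac{2}{ns^2}H^2$. Since $|B|^2=|B^{\bH}|^2/H(B)^2+|\widehat B|^2\ge|\widehat A|^2$, we obtain $|\widehat A|^2\le 2H^2/(ns^2)$, which is the claim. The only point that needs any care is verifying the identities $\widehat{\Theta_s(A)}=\widehat A$ and $\tr\Theta_s(A)=s\bH$. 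Both follow because $\widehat A$ is traceless with values in $\bH^\perp$, while $\tr(A^{\bH}/H^2)=1$. With these in hand, no step should present a real obstacle.
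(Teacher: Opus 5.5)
Your proof is correct and follows the paper's argument. Invariance comes from the equivariance and degree-one homogeneity of $\Theta_s$ (which the paper simply attributes to \eqref{eq:theta}), and the bound comes from applying \eqref{eq:coarse} to $\Theta_s^{-1}(A)\in\cC_i(\eta)$, whose trace is $\bH/s$ and whose part orthogonal to $\bH$ is $\widehat A$, so that $s^2|\widehat A|^2\le s^2|\Theta_s^{-1}(A)|^2\le 2H^2/n$; you have only made explicit the details the paper leaves implicit.
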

\begin{proof}
Invariance follows from \eqref{eq:theta}. For $A\in\cC_{i,\eta}(s)$, we have $\Theta_s^{-1}(A)\in\cC_i(\eta)$ and $\tr\Theta_s^{-1}(A)=\bH/s$. By \eqref{eq:coarse}, $s^2|\widehat A|^2 \le s^2|\Theta_s^{-1}(A)|^2\le\frac{2H^2}{n}$.
\end{proof}

\Needspace{10\baselineskip}
\begin{lemma}\label{lem:deformation-calculus}
Let $s\ge1$ and $A\in\Sym^2(V)\otimes N$ with $H>0$. Fix an orthonormal normal frame $\nu_1,\ldots,\nu_q$ with $\nu_1=\bH/H$, so that $A_1=A^{\bH}/H$ and $\widehat A=\sum_{\alpha>1}A_\alpha^\circ\otimes\nu_\alpha$. Then the following identities hold for every unit $u$.
\begin{enumerate}[label=(\roman*)]
\item The components of $\Q(\Theta_s^{-1}(A))$ are given by
\begin{equation}\label{eq:deformed-q}
\begin{aligned}
 \Q(\Theta_s^{-1}(A))_1 &=s^{-1}\Q(A)_1+(s^{-3}-s^{-1})|A_1|^2A_1,\\
 \Q(\Theta_s^{-1}(A))_\alpha &=s^{-2}\bigl(\tr(A_\alpha A_1)A_1+A_1^2A_\alpha +A_\alpha A_1^2-2A_1A_\alpha A_1\bigr) +\Q(\widehat A)_\alpha\qquad(\alpha>1).
\end{aligned}
\end{equation}

\item We have
\begin{equation}\label{eq:deformed-reaction-identity}
\begin{aligned}
 &D\left(\frac{\Phi_u^{i,\eta}\circ\Theta_s^{-1}}{H}\right)(A)[\Q(A)]\\
 &=|A_1|^2\frac{\Phi_u^{i,\eta}(\Theta_s^{-1}(A))}{H}\\
 &\quad+\frac1H\left( D\Phi_u^{i,\eta}(\Theta_s^{-1}(A))[\Q(\Theta_s^{-1}(A))] -2s^{-2}|A_1|^2\Phi_u^{i,\eta}(\Theta_s^{-1}(A))\right)\\
 &\quad+\frac{1-s^{-2}}{H}D\Phi_u^{i,\eta}(\widehat A) [A_1^2\widehat A+\widehat A A_1^2 -2A_1\widehat A A_1-|A_1|^2\widehat A].
\end{aligned}
\end{equation}

\item Let $Z\in\Sym^3(V)\otimes N$ and $Z_j=Z(e_j,\cdot,\cdot)$. Write $(\tr Z_\alpha)_j=\tr Z_{\alpha j}$, and define the traceless tensors $\widetilde Z_j=\sum_\alpha\widetilde Z_{\alpha j}\otimes\nu_\alpha$ by
\[
\begin{aligned}
 \widetilde Z_{1j} &=Z_{1j}^\circ-\frac{\tr Z_{1j}}{H}A_1^\circ -(s^2-1)\sum_{\alpha>1}\frac{\tr Z_{\alpha j}}{H}A_\alpha^\circ,\\
 \widetilde Z_{\alpha j} &=sZ_{\alpha j}^\circ-s\frac{\tr Z_{1j}}{H}A_\alpha^\circ -(s-s^{-1})\frac{\tr Z_{\alpha j}}{H}A_1^\circ \qquad(\alpha>1).
\end{aligned}
\]
Then
\begin{equation}\label{eq:deformed-second-identity}
\begin{aligned}
 &-\frac12\sum_jD^2\left(\frac{\Phi_u^{i,\eta}\circ\Theta_s^{-1}}{H}\right)(A)[Z_j,Z_j]\\
 &=-\frac1{s^2H}\sum_j\Phi_u^{i,\eta}(\widetilde Z_j)\\
 &\quad-\frac1{H^3}\left(\frac12\Phi_u^{i,\eta}(\Theta_s^{-1}(A)) +s^{-4}[-\Phi_u^{i,\eta}(A_1^\circ\otimes\nu_1)] -\Phi_u^{i,\eta}(\widehat A)\right) \sum_{\alpha>1}|\tr Z_\alpha|^2\\
 &\quad+\frac{s^2-1}{H^3}\sum_j\Phi_u^{i,\eta}\left( \sum_{\alpha>1}(\tr Z_{\alpha j})A_\alpha^\circ\otimes\nu_1\right).
\end{aligned}
\end{equation}
\end{enumerate}
Products of a matrix and $\widehat A$ are taken in each normal component.
\end{lemma}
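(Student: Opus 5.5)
Everything is a direct computation in the fixed frame $\nu_1=\bH/H$. First record how $\Theta_s^{-1}$ acts in this frame: $\Theta_s^{-1}(A)$ has first component $A_1/s$ and components $A_\alpha=\widehat A_\alpha$ for $\alpha>1$. Its mean curvature vector is $\bH/s$, so it has the same unit normal $\nu_1$.

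\emph{Step 1: part (i).} Substitute $B=\Theta_s^{-1}(A)$, i.e.\ $B_1=s^{-1}A_1$ and $B_\alpha=A_\alpha$ for $\alpha>1$, into \eqref{eq:reaction}.
\begin{itemize}
\item For the first component, the $\beta=1$ term scales by $s^{-3}$ and the $\beta>1$ terms by $s^{-1}$. The $\beta=1$ contribution of $\Q(A)_1$ is $|A_1|^2A_1$, since the commutator terms cancel. This gives the stated formula.
\item For $\alpha>1$, the $\beta=1$ term scales by $s^{-2}$ and the $\beta>1$ terms are exactly $\Q(\widehat A)_\alpha$. The cross term $\tr(A_\alpha A_\beta)A_\beta$ has no $\beta=1$ leftover beyond the displayed one, and $\tr(A_\alpha A_1)=\tr(\widehat A_\alpha A_1)$.
\end{itemize}

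\emph{Step 2: the derivative of $\Theta_s^{-1}$.} Differentiate $\Theta_s^{-1}$ at $A$ in a direction $Z$. Write $\delta\nu_1=\sum_{\alpha>1}(\tr Z_\alpha/H)\nu_\alpha$ and $\delta H=\tr Z_1$. The differential is
\[
 Z\mapsto Z-(1-s^{-1})\bigl[\delta(A_1)\otimes\nu_1+A_1\otimes\delta\nu_1\bigr],
\]
where $\delta(A_1)=\ip{Z}{\nu_1}+\ip{A}{\delta\nu_1}$. The second derivative has the same structure, with $\delta^2\nu_1$ obtained from differentiating $\bH/|\bH|$ twice.

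\emph{Step 3: part (ii).} Apply Step 2 with $Z=\Q(A)$, using \eqref{eq:mean-orthogonal-Q} to get $\tr\Q(A)$; this fixes $\delta H=|A_1|^2H$ and $\delta\nu_1$. The factor $1/H$ contributes $-|A_1|^2\Phi/H$. The linearization of $\Theta_s^{-1}$ applied to $\Q(A)$ should equal $\Q(\Theta_s^{-1}(A))$ plus two corrections:
\begin{itemize}
\item a multiple $(1-s^{-2})$ of the $\beta=1$ reaction term acting on $\widehat A$, namely $A_1^2\widehat A+\widehat AA_1^2-2A_1\widehat AA_1-|A_1|^2\widehat A$;
\item a radial term $c\,\Theta_s^{-1}(A)$, handled by Euler's identity $D\Phi[B]=2\Phi(B)$.
\end{itemize}
Matching these pieces against part (i) should produce the coefficients $|A_1|^2$ and $-2s^{-2}|A_1|^2$. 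One point to use is that $D\Phi_u(\Theta_s^{-1}A)$ applied to a $\widehat A$-direction equals $D\Phi_u(\widehat A)$ on that direction. This holds because the quadratic form decouples between the $\nu_1$ block and the $\bH^\perp$ block, apart from the $H^2$ term, which does not see these directions.

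\emph{Step 4: part (iii).} Expand the second derivative of $\Phi\circ\Theta_s^{-1}/H$ by the product rule. This gives:
\begin{itemize}
\item $H^{-1}D^2\Phi[D\Theta_s^{-1}Z_j,\cdot]$;
\item $H^{-1}D\Phi[D^2\Theta_s^{-1}]$;
\item cross terms with $-\tr Z_{1j}/H^2$;
\item $2(\tr Z_{1j})^2\Phi/H^3$.
\end{itemize}
Complete the square using \eqref{eq:phi-quadratic-identities}, so that the $\tr Z_{1j}$ terms are absorbed into $\widetilde Z_j$; this is why $\widetilde Z_j$ contains the subtraction $-(\tr Z_{1j}/H)A^\circ$. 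The $s$-weights $1$ and $s$ in $\widetilde Z$ arise because $\Phi(\Theta_s^{-1}\cdot)$ is the form $\Phi$ applied after rescaling the $\nu_1$ block by $s^{-1}$. Pulling out the overall factor $s^{-2}$ turns the weights into $(1,s)$.

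The leftover terms quadratic in $\tr Z_\alpha$ ($\alpha>1$) come from the rotation of $\nu_1$. They are of two kinds:
\begin{itemize}
\item the second variation of the frame, which gives the $-\tfrac12\Phi(\Theta_s^{-1}A)$, $s^{-4}\Phi(A_1^\circ\otimes\nu_1)$ and $\Phi(\widehat A)$ coefficients;
\item the part of the square not captured by $\widetilde Z$, which gives the final $(s^2-1)\Phi(\sum(\tr Z_{\alpha j})A_\alpha^\circ\otimes\nu_1)$ term.
\end{itemize}

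I expect this bookkeeping in (iii) to be the main obstacle, in particular verifying the exact coefficient $(s^2-1)$ and the $s^{-4}$ factor. To control it, I would check the identity separately on three classes of $Z$: $Z$ valued in $\nu_1$, $Z$ valued in $\bH^\perp$, and mixed $Z$. I would also use the sanity check $s=1$, where $\Theta_1=\mathrm{id}$ and the formula must reduce to the plain second derivative of $\Phi/H$.
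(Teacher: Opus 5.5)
The gap is in your Step 3, where the asserted decomposition is false. Put $Z=\Q(A)$ into your Step 2 formula, so that $\delta\nu_1=\sum_{\alpha>1}\tr(A_\alpha A_1)\nu_\alpha$. Comparing with part (i) then gives
\[
 D\Theta_s^{-1}(A)[\Q(A)]=\Q(\Theta_s^{-1}(A))+(1-s^{-2})X+(1-s^{-2})|A_1|^2\,\Theta_s^{-1}(A)+R,
\]
where $X=A_1^2\widehat A+\widehat AA_1^2-2A_1\widehat AA_1-|A_1|^2\widehat A$ and
\[
 R=-(1-s^{-1})\sum_{\alpha>1}\tr(A_\alpha A_1)A_\alpha\otimes\nu_1+(s^{-1}-s^{-2})\sum_{\alpha>1}\tr(A_\alpha A_1)A_1\otimes\nu_\alpha.
\]
For $s>1$ the term $R$ is generically nonzero. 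It is neither a multiple of $\Theta_s^{-1}(A)$ nor $\bH^\perp$-valued, so matching against part (i) leaves terms that do not occur in \eqref{eq:deformed-reaction-identity}.

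The missing observation is that $R=(1-s^{-1})\,\omega\cdot\Theta_s^{-1}(A)$, where $\omega$ is the skew-symmetric endomorphism of $N$ with $\omega\nu_1=\sum_{\alpha>1}\tr(A_\alpha A_1)\nu_\alpha$ and $\omega\nu_\alpha=-\tr(A_\alpha A_1)\nu_1$. Hence $D\Phi_u^{i,\eta}(\Theta_s^{-1}(A))[R]=0$ by $O(N)$-invariance. With this added, your route does give (ii):
\begin{itemize}
\item Euler's identity turns the radial term into $2(1-s^{-2})|A_1|^2\Phi_u^{i,\eta}(\Theta_s^{-1}(A))/H$.
\item The factor $1/H$ contributes $-|A_1|^2\Phi_u^{i,\eta}(\Theta_s^{-1}(A))/H$.
\item Your decoupling remark replaces $D\Phi_u^{i,\eta}(\Theta_s^{-1}(A))[X]$ by $D\Phi_u^{i,\eta}(\widehat A)[X]$.
\end{itemize}
The paper never differentiates $\Theta_s^{-1}$. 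By \eqref{eq:deformed-splitting}, $\Phi_u^{i,\eta}(\Theta_s^{-1}(A))=s^{-2}\Phi_u^{i,\eta}(A)+(1-s^{-2})\Phi_u^{i,\eta}(\widehat A)$. So only $D\widehat A(A)[\Q(A)]$ is needed, and its $\nu_1$-component is annihilated by $D\Phi_u^{i,\eta}(\widehat A)$.

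Part (iii) is not actually proved, and your outline omits pieces that affect exactly the coefficients you flag. The second derivative of $1/H$ is
\[
 D^2(H^{-1})(A)[Z_j,Z_j]=H^{-3}\Bigl(2(\tr Z_{1j})^2-\sum_{\alpha>1}(\tr Z_{\alpha j})^2\Bigr).
\]
You keep only the first term. The second feeds the coefficient of $\Phi_u^{i,\eta}(\Theta_s^{-1}(A))\sum_{\alpha>1}|\tr Z_\alpha|^2$. Also, at second order the $\delta\nu_1$ contributions no longer drop out. You would have to track $D^2\Phi_u^{i,\eta}$ on them, together with $D\Phi_u^{i,\eta}(\Theta_s^{-1}(A))[D^2\Theta_s^{-1}(A)[Z_j,Z_j]]$ and $\delta^2\nu_1$.

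The paper avoids all of this with two devices.
\begin{itemize}
\item \emph{Homogeneity.} $\Phi_u^{i,\eta}\circ\Theta_s^{-1}/H$ is homogeneous of degree one, so its Hessian at $A$ annihilates $A$. Since $\widetilde Z_j$ vanishes at $Z_j=A$ and $\tr Z_{\alpha j}$ ($\alpha>1$) is unchanged, one may replace $Z_j$ by $Z_j-(\tr Z_{1j}/H)A$ and assume $\tr Z_{1j}=0$ before expanding. This is what the $\tr Z_{1j}$-shifts in $\widetilde Z_j$ encode.
\item \emph{Expansion along a line.} With $H=1$ and $B=A+rZ_j$, normal-frame invariance rewrites \eqref{eq:deformed-splitting} as $\Phi_u^{i,\eta}(B)/|\tr B|+(s^{-2}-1)|\tr B|^{-3}\Phi_u^{i,\eta}(\ip B{\tr B}_N\otimes\nu_1)$. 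One then uses the exact formulas $|\tr B|^2=1+r^2\sum_{\alpha>1}(\tr Z_{\alpha j})^2$ and $\ip B{\tr B}_N=A_1+r\bigl(Z_{1j}+\sum_{\alpha>1}(\tr Z_{\alpha j})A_\alpha^\circ\bigr)+r^2\sum_{\alpha>1}(\tr Z_{\alpha j})Z_{\alpha j}$.
\end{itemize}
The $r^2$ coefficient, after completing squares, yields $\widetilde Z_j$, the $s^{-4}$ term and the $(s^2-1)$ term directly. No second derivative of $\Theta_s^{-1}$ or of $\bH/H$ is needed. Your special-case checks and the $s=1$ check are consistency tests, not a substitute for this computation.
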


\begin{proof}
In the chosen normal frame, the components of $\Theta_s^{-1}(A)$ are $s^{-1}A_1,A_2,\ldots,A_q$. Substituting these in \eqref{eq:reaction} proves (i). By \eqref{eq:phi1} and \eqref{eq:phi2}, we also have
\begin{equation}\label{eq:deformed-splitting}
\begin{aligned}
 \Phi_u^{i,\eta}(\Theta_s^{-1}(A)) &=s^{-2}\Phi_u^{i,\eta}\left(\frac{A^{\bH}}{H^2}\otimes\bH\right) +\Phi_u^{i,\eta}(\widehat A)\\
 &=s^{-2}\Phi_u^{i,\eta}(A)+(1-s^{-2})\Phi_u^{i,\eta}(\widehat A).
\end{aligned}
\end{equation}
To prove (ii) and (iii), we may assume by homogeneity that $H=1$. In the chosen normal frame, $\nu_1=\bH$. Taking the trace of \eqref{eq:reaction} gives
\[
\begin{aligned}
 \tr\Q(A)_\alpha &=\sum_\beta\tr(A_\alpha A_\beta)\tr A_\beta =\tr(A_1A_\alpha),\\
 DH(A)[\Q(A)]&=\ip{\bH}{\tr\Q(A)}=|A_1|^2.
\end{aligned}
\]
Since $\widehat A=A-(A^{\bH}/H^2)\otimes\bH$, we have
\[
\begin{aligned}
 D\widehat A(A)[\Q(A)] &=\sum_{\alpha>1}\bigl(\Q(A)_\alpha-\tr(A_1A_\alpha)A_1\bigr) \otimes\nu_\alpha -\left(\sum_{\alpha>1}\tr(A_1A_\alpha)A_\alpha\right)\otimes\nu_1\\
 &=\Q(\widehat A)+A_1^2\widehat A+\widehat A A_1^2-2A_1\widehat A A_1 -\left(\sum_{\alpha>1}\tr(A_1A_\alpha)A_\alpha\right)\otimes\nu_1.
\end{aligned}
\]
Since $\widehat A_1=0$, \eqref{eq:phi1} and \eqref{eq:phi2} give
\[
 \Phi_u^{i,\eta}(\widehat A+rU\otimes\nu_1) =\Phi_u^{i,\eta}(\widehat A)+r^2\Phi_u^{i,\eta}(U\otimes\nu_1) \qquad(U\in\Sym^2(V),\ r\in\R).
\]
Hence $D\Phi_u^{i,\eta}(\widehat A)[U\otimes\nu_1]=0$, and \eqref{eq:deformed-splitting} implies
\[
\begin{aligned}
 &D(\Phi_u^{i,\eta}\circ\Theta_s^{-1})(A)[\Q(A)]\\
 &=s^{-2}D\Phi_u^{i,\eta}(A)[\Q(A)] +(1-s^{-2})D\Phi_u^{i,\eta}(\widehat A) [D\widehat A(A)[\Q(A)]]\\
 &=s^{-2}D\Phi_u^{i,\eta}(A)[\Q(A)] +(1-s^{-2})D\Phi_u^{i,\eta}(\widehat A) [\Q(\widehat A)+A_1^2\widehat A+\widehat A A_1^2-2A_1\widehat A A_1].
\end{aligned}
\]
On the other hand, \eqref{eq:deformed-q} gives
\[
\begin{aligned}
 &D\Phi_u^{i,\eta}(\Theta_s^{-1}(A))[\Q(\Theta_s^{-1}(A))]\\
 &=s^{-2}D\Phi_u^{i,\eta}(A)[\Q(A)] +2(s^{-4}-s^{-2})|A_1|^2\Phi_u^{i,\eta}(A_1\otimes\nu_1)\\
 &\quad+(1-s^{-2})D\Phi_u^{i,\eta}(\widehat A)[\Q(\widehat A)].
\end{aligned}
\]
Since $D\Phi_u^{i,\eta}(\widehat A)[\widehat A] =2\Phi_u^{i,\eta}(\widehat A)$ and $DH(A)[\Q(A)]=|A_1|^2$, the last two identities imply
\[
\begin{aligned}
 &D\left(\frac{\Phi_u^{i,\eta}\circ\Theta_s^{-1}}{H}\right)(A)[\Q(A)]\\
 &=D(\Phi_u^{i,\eta}\circ\Theta_s^{-1})(A)[\Q(A)] -|A_1|^2\Phi_u^{i,\eta}(\Theta_s^{-1}(A))\\
 &=|A_1|^2\Phi_u^{i,\eta}(\Theta_s^{-1}(A)) +D\Phi_u^{i,\eta}(\Theta_s^{-1}(A))[\Q(\Theta_s^{-1}(A))] -2s^{-2}|A_1|^2\Phi_u^{i,\eta}(\Theta_s^{-1}(A))\\
 &\quad+(1-s^{-2})D\Phi_u^{i,\eta}(\widehat A) [A_1^2\widehat A+\widehat A A_1^2 -2A_1\widehat A A_1-|A_1|^2\widehat A].
\end{aligned}
\]
This proves (ii).

To prove (iii), fix $j$ and let $Z_j\in\Sym^2(V)\otimes N$. Since $(\Phi_u^{i,\eta}\circ\Theta_s^{-1})/H$ is homogeneous of degree one,
\[
\begin{aligned}
 D^2\left(\frac{\Phi_u^{i,\eta}\circ\Theta_s^{-1}}H\right)(A)[A,Z_j]&=0,\\
 D^2\left(\frac{\Phi_u^{i,\eta}\circ\Theta_s^{-1}}H\right)(A) [Z_j-(\tr Z_{1j})A,Z_j-(\tr Z_{1j})A] &=D^2\left(\frac{\Phi_u^{i,\eta}\circ\Theta_s^{-1}}H\right)(A)[Z_j,Z_j].
\end{aligned}
\]
The definition of $\widetilde Z_j$ is linear in $Z_j$ and gives zero when $Z_j=A$. Hence replacing $Z_j$ by $Z_j-(\tr Z_{1j})A$ leaves $\widetilde Z_j$ unchanged. Moreover, $\tr A_1=1$ and $\tr A_\alpha=0$ for $\alpha>1$ give
\[
 \tr\bigl(Z_{1j}-(\tr Z_{1j})A_1\bigr)=0,\qquad \tr\bigl(Z_{\alpha j}-(\tr Z_{1j})A_\alpha\bigr) =\tr Z_{\alpha j}\quad(\alpha>1).
\]
Thus both sides of \eqref{eq:deformed-second-identity} are unchanged, and we may assume that $\tr Z_{1j}=0$. For sufficiently small $r$, we have
\[
\begin{aligned}
 |\bH+r\tr Z_j|^2 &=1+r^2\sum_{\alpha>1}(\tr Z_{\alpha j})^2,\\
 \sum_\alpha\langle\bH+r\tr Z_j,\nu_\alpha\rangle (A_\alpha+rZ_{\alpha j}) &=A_1+r\left(Z_{1j} +\sum_{\alpha>1}(\tr Z_{\alpha j})A_\alpha^\circ\right) +r^2\sum_{\alpha>1}(\tr Z_{\alpha j})Z_{\alpha j}.
\end{aligned}
\]
Applying \eqref{eq:deformed-splitting} to $A+rZ_j$ and using invariance under orthogonal changes of the normal frame, we obtain
\[
\begin{aligned}
 \frac{\Phi_u^{i,\eta}(\Theta_s^{-1}(A+rZ_j))}{|\bH+r\tr Z_j|} &=\frac{\Phi_u^{i,\eta}(A+rZ_j)}{|\bH+r\tr Z_j|}\\
 &\quad+\frac{s^{-2}-1}{|\bH+r\tr Z_j|^3} \Phi_u^{i,\eta}\left( \sum_\alpha\langle\bH+r\tr Z_j,\nu_\alpha\rangle (A_\alpha+rZ_{\alpha j})\otimes\nu_1\right).
\end{aligned}
\]
Comparing the terms of order $r^2$ and using Lemma~\ref{lem:phi-derivatives}(iii), we obtain
\begin{align*}
 &-\frac12D^2\left(\frac{\Phi_u^{i,\eta}\circ\Theta_s^{-1}}{H}\right)(A)[Z_j,Z_j]\\
 &=-\Phi_u^{i,\eta}(Z_j) +(1-s^{-2})\Phi_u^{i,\eta}\left( \left[Z_{1j}+\sum_{\alpha>1}(\tr Z_{\alpha j})A_\alpha^\circ\right] \otimes\nu_1\right)\\*
 &\quad+(1-s^{-2})D\Phi_u^{i,\eta}(A_1\otimes\nu_1) \left[\sum_{\alpha>1}(\tr Z_{\alpha j})Z_{\alpha j}\otimes\nu_1\right]\\*
 &\quad+\left[\frac12\Phi_u^{i,\eta}(\Theta_s^{-1}(A)) -(1-s^{-2})\Phi_u^{i,\eta}(A_1\otimes\nu_1)\right] \sum_{\alpha>1}(\tr Z_{\alpha j})^2.
\end{align*}
Since $\tr Z_{1j}=0$, $\tr A_1=1$, and $\tr A_\alpha=0$ for $\alpha>1$, completing squares gives
\begin{align*}
 &-\frac12D^2\left(\frac{\Phi_u^{i,\eta}\circ\Theta_s^{-1}}{H}\right)(A)[Z_j,Z_j]\\
 &=-s^{-2}\Phi_u^{i,\eta}\left( \left[Z_{1j}-(s^2-1)\sum_{\alpha>1}(\tr Z_{\alpha j})A_\alpha^\circ\right] \otimes\nu_1\right)\\*
 &\quad-s^{-2}\sum_{\alpha>1}\Phi_u^{i,\eta}\left( \left[sZ_{\alpha j}^\circ-(s-s^{-1})(\tr Z_{\alpha j})A_1^\circ\right] \otimes\nu_\alpha\right)\\*
 &\quad+(s^2-1)\Phi_u^{i,\eta}\left( \sum_{\alpha>1}(\tr Z_{\alpha j})A_\alpha^\circ\otimes\nu_1\right)\\*
 &\quad+\left[\frac12\Phi_u^{i,\eta}(\Theta_s^{-1}(A)) -s^{-2}\Phi_u^{i,\eta}(A_1\otimes\nu_1) +s^{-4}\Phi_u^{i,\eta}(A_1^\circ\otimes\nu_1)\right] \sum_{\alpha>1}(\tr Z_{\alpha j})^2.
\end{align*}
The first two terms equal $-s^{-2}\Phi_u^{i,\eta}(\widetilde Z_j)$ by definition. For the last term, \eqref{eq:deformed-splitting} gives
\[
\begin{aligned}
 &\frac12\Phi_u^{i,\eta}(\Theta_s^{-1}(A)) -s^{-2}\Phi_u^{i,\eta}(A_1\otimes\nu_1) +s^{-4}\Phi_u^{i,\eta}(A_1^\circ\otimes\nu_1)\\
 &=-\frac12\Phi_u^{i,\eta}(\Theta_s^{-1}(A)) -s^{-4}[-\Phi_u^{i,\eta}(A_1^\circ\otimes\nu_1)] +\Phi_u^{i,\eta}(\widehat A).
\end{aligned}
\]
Summing over $j$ proves (iii) when $H=1$. The general identity follows by homogeneity.
\end{proof}

\Needspace{12\baselineskip}
\begin{proposition}\label{prop:deformed-reaction}
Let $s\ge1$ and $A\in\cC_{i,\eta/2}(s)$, and let $u$ be unit such that $\Phi_u^{i,\eta}(\Theta_s^{-1}(A)) =\min_{|v|=1}\Phi_v^{i,\eta}(\Theta_s^{-1}(A))$. Then
\[
\begin{aligned}
 D\left(\frac{\Phi_u^{i,\eta}\circ\Theta_s^{-1}}{H}\right)(A)[\Q(A)] &\ge |A|^2\min\left\{\frac{\Phi_u^{i,\eta}(\Theta_s^{-1}(A))}{H},0\right\}+\frac{\eta H}{n}[-\Phi_u^{i,\eta}(\widehat A)].
\end{aligned}
\]
\end{proposition}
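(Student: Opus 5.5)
Start from identity \eqref{eq:deformed-reaction-identity} of Lemma~\ref{lem:deformation-calculus}(ii) and estimate its three lines separately. Write $B=\Theta_s^{-1}(A)$, so $B$ has mean curvature $H/s>0$ and $\Phi_u^{i,\eta}(B)$ is the minimum over unit $v$. Applying Proposition~\ref{prop:reaction-first} or~\ref{prop:reaction-second} to $B$ gives
\[
 D\Phi_u^{i,\eta}(B)[\Q(B)]\ge 2\Bigl(\frac{|B^{\bH}|^2}{|\tr B|^2}+c_n[-\Phi_u^{i,0}(\widehat B)]\Bigr)\Phi_u^{i,\eta}(B),
\]
with $c_1=\frac{n-1}{n}$ and $c_2=\frac{(n-1)(n-2)}{2n(n+1)}$. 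Here I use the sharper form \eqref{eq:first-sharp-reaction} or \eqref{eq:second-sharp-reaction}, not just the $\min$ statement. Since $\widehat B=\widehat A$ and $|B^{\bH}|^2/|\tr B|^2=s^{-2}|A_1|^2$, the terms $\pm2s^{-2}|A_1|^2\Phi_u^{i,\eta}(B)$ cancel. Thus the first two lines of \eqref{eq:deformed-reaction-identity} are bounded below by
\[
 \frac{\Phi_u^{i,\eta}(B)}{H}\bigl(|A_1|^2+2c_n[-\Phi_u^{i,0}(\widehat A)]\bigr).
\]
When $\Phi_u^{i,\eta}(B)\le0$, Lemma~\ref{lem:scalar} gives $|A_1|^2+2c_n[-\Phi^{i,0}_u(\widehat A)]\le |A_1|^2+C|\widehat A|^2$. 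To compare this with $|A|^2=|A_1|^2+|\widehat A|^2$, I expect to spend part of the good term from the third line. Before doing so, I would double-check the bookkeeping, since the cleaner route may simply be to keep the $\eta$-gain separate.

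More importantly, I would not discard the positive contribution in \eqref{eq:first-sharp-reaction} or \eqref{eq:second-sharp-reaction}. Its exact form is $c_n\Phi_u^{i,0}(B)[-\Phi_u^{i,0}(\widehat A)]$, and $\Phi_u^{i,0}(B)=\Phi_u^{i,\eta}(B)+\kappa_i\eta H^2/s^2$, where $\kappa_1=\frac1{n-1}$ and $\kappa_2=\frac{2(n+1)}{(n-1)(n-2)}$. The extra piece $c_n\kappa_i\eta s^{-2}H^2[-\Phi_u^{i,0}(\widehat A)]$ is positive. Moreover $-\Phi^{i,0}_u(\widehat A)$ differs from $-\Phi^{i,\eta}_u(\widehat A)$ only by the trace term, which vanishes since $\widehat A$ is traceless. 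This is the source of the $\frac{\eta H}{n}[-\Phi_u^{i,\eta}(\widehat A)]$ term. However, it carries a factor $s^{-2}$, so it is too weak for large $s$. The hypothesis $A\in\cC_{i,\eta/2}(s)$ is what compensates. It gives $\Phi_u^{i,\eta}(B)\ge-\frac{\eta}{2}\kappa_i H^2/s^2$, so $B$ is nearly in the $\eta$-cone, and $s^2|\widehat A|^2\le 2H^2/n$ by Lemma~\ref{lem:family}.

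The third line is the main obstacle. It equals $\frac{1-s^{-2}}{H}D\Phi_u^{i,\eta}(\widehat A)[E]$ with
\[
 E=A_1^2\widehat A+\widehat AA_1^2-2A_1\widehat AA_1-|A_1|^2\widehat A.
\]
By Lemma~\ref{lem:phi-derivatives}(iii), $D\Phi_u(\widehat A)[E]=D^2\Phi_u[\widehat A,E]$. With $A_1=\frac Hn\Id+A_1^\circ$, only $A_1^\circ$ enters the commutator part, so
\[
 E=-|A_1|^2\widehat A+[A_1^\circ,[A_1^\circ,\widehat A]].
\]
The term $-|A_1|^2\widehat A$ contributes $2|A_1|^2[-\Phi_u^{i,\eta}(\widehat A)]\ge \frac{2H^2}{n}[-\Phi_u^{i,\eta}(\widehat A)]$, which is large and positive. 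The double commutator must be bounded by $C|A_1^\circ|^2[-\Phi_u(\widehat A)]$ using operator-norm estimates as in Lemmas~\ref{lem:second-mixed} and~\ref{lem:second-operator-sum}. The smallness of $|A_1^\circ|^2$ relative to $H^2$, which is $|A_1^\circ|^2\le(1-\eta/2)s^2\cdot$(cone bound) from \eqref{eq:shape-square-bound} applied to $B$, together with the minimizing choice of $u$, should make this at most $(1-\frac{\eta}{2})\frac{2H^2}{n}[-\Phi_u(\widehat A)]$. The leftover $\frac{\eta H^2}{n}[-\Phi_u^{i,\eta}(\widehat A)]$, multiplied by $(1-s^{-2})/H$ and combined with the $s^{-2}$ piece from the previous paragraph, gives the claimed $\frac{\eta H}{n}[-\Phi_u^{i,\eta}(\widehat A)]$. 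The remaining positive surplus absorbs the $C|\widehat A|^2$ error when converting the coefficient to $|A|^2$. Obtaining the sharp constant in the double-commutator bound, where the margin is exactly $\eta$, is where I expect the real work to lie.
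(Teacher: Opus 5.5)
Your outline matches the paper's bookkeeping. You start from \eqref{eq:deformed-reaction-identity} and apply \eqref{eq:first-sharp-reaction} or \eqref{eq:second-sharp-reaction} to $B=\Theta_s^{-1}(A)$, so that $\pm2s^{-2}|A_1|^2\Phi_u^{i,\eta}(B)$ cancel. You then take $\frac{\eta H}{ns^2}[-\Phi_u^{i,\eta}(\widehat A)]$ from the second line and $(1-s^{-2})\frac{\eta H}{n}[-\Phi_u^{i,\eta}(\widehat A)]$ from the third, and add. \textbf{The gap is the third-line estimate.} You call it the main obstacle, leave it unproved, and the mechanism you sketch would not close it.

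\textbf{Third line.} Normalize $H=1$. You replace $2|A_1|^2$ by $2/n$ and hope to bound the double commutator by $\frac{2-\eta}{n}[-\Phi_u^{i,\eta}(\widehat A)]$, using smallness of $|A_1^\circ|^2$ and the minimizing choice of $u$. The minimizing choice is not needed here. Once $2|A_1^\circ|^2$ is discarded, the direct bound below is too large by the factor $\frac{n+2}{n-2}$ when $i=2$. The paper's argument uses three further ingredients.
\begin{enumerate}
\item \emph{Splitting the multiplier.} In an eigenbasis of $A_1^\circ$, $[A_1^\circ,[A_1^\circ,\cdot\,]]$ multiplies the $(j,k)$ entry by $(\lambda_j-\lambda_k)^2=\frac12d^2+\mu_{jk}$. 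Here $d=\lambda_n-\lambda_1$ and $|\mu_{jk}|\le\frac12d^2$.
\item \emph{Absorbing the constant part.} The constant part contributes $-d^2[-\Phi_u^{i,\eta}(\widehat A)]$. This is absorbed by the term $2|A_1^\circ|^2[-\Phi_u^{i,\eta}(\widehat A)]$ coming from $-|A_1|^2\widehat A$, since $d^2\le2|A_1^\circ|^2$.
\item \emph{Controlling the oscillating part.} On traceless tensors $-\Phi_u^{i,\eta}$ has eigenvalues in $[1,n+3]$. This gives the condition-number estimate $|D\Phi_u^{i,\eta}(\widehat A)|\,|\widehat A|\le\frac{n-2}{2}[-\Phi_u^{i,\eta}(\widehat A)]$.
\end{enumerate}
Together these yield $D\Phi_u^{i,\eta}(\widehat A)[E]\ge\bigl(\frac2n-\frac{n-2}{4}d^2\bigr)[-\Phi_u^{i,\eta}(\widehat A)]$. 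Now $d^2\le4\max_j\lambda_j^2\le\frac{4(2-\eta)}{n(n-2)}$ by the \emph{operator} bound \eqref{eq:shape-square-bound} applied to $B\in\cC_i(\eta/2)$; the powers of $s$ cancel. This gives exactly the margin $\frac\eta n$. The smallness you need is of the eigenvalues of $A_1^\circ$, not of $|A_1^\circ|^2$; the trace bound on $|A_1^\circ|^2$ is far too weak.

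\textbf{Second line.} Your detour through the weak form creates an artificial error. Absorbing it by ``part of the good term from the third line'' fails as $s\to1$, where the third line vanishes. Converting $[-\Phi_u^{i,\eta}(\widehat A)]$ to $C|\widehat A|^2$ and back also loses the condition number of $-\Phi_u^{i,\eta}$, which is of order $n$. Use the hypothesis directly instead:
\begin{itemize}
\item $\Phi_u^{i,0}(B)=\Phi_u^{i,\eta/2}(B)+\frac\eta2\kappa_iH^2s^{-2}\ge\frac\eta2\kappa_iH^2s^{-2}$;
\item $c_i\kappa_i=\frac1n$ in your notation, and $\Phi_u^{i,0}(\widehat A)=\Phi_u^{i,\eta}(\widehat A)$ because $\widehat A$ is traceless.
\end{itemize}
The sharp inequality then gives $D\Phi_u^{i,\eta}(B)[\Q(B)]-2s^{-2}|A_1|^2\Phi_u^{i,\eta}(B)\ge\frac{\eta H^2}{ns^2}[-\Phi_u^{i,\eta}(\widehat A)]$ with no error term. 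The first line is handled by $|A_1|^2\le|A|^2$.
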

\begin{proof}
By homogeneity, we may assume that $H=1$. Fix an orthonormal normal frame $\nu_1,\ldots,\nu_q$ with $\nu_1=\bH$, so that $A_1=A^{\bH}$. On the space of traceless tensors, Lemma~\ref{lem:scalar} bounds the eigenvalues of $-\Phi_u^{i,\eta}$ between $1$ and $n+3$. Taking the norm of $D\Phi_u^{i,\eta}(\widehat A)$ on this space and using $r^2+n+3\le(n+4)r$ for $1\le r\le n+3$, Lemma~\ref{lem:phi-derivatives}(iii) gives
\[
\begin{aligned}
 |D\Phi_u^{i,\eta}(\widehat A)|\,|\widehat A| &\le\frac{|D\Phi_u^{i,\eta}(\widehat A)|^2 +4(n+3)|\widehat A|^2}{4\sqrt{n+3}}\\
 &\le\frac{n+4}{\sqrt{n+3}}[-\Phi_u^{i,\eta}(\widehat A)] \le\frac{n-2}{2}[-\Phi_u^{i,\eta}(\widehat A)].
\end{aligned}
\]
Choose an orthonormal eigenbasis of $A_1^\circ$ with eigenvalues $\lambda_1\le\cdots\le\lambda_n$. Then
\[
 (A_1^2\widehat A_\alpha+\widehat A_\alpha A_1^2 -2A_1\widehat A_\alpha A_1)_{jk} =(\lambda_j-\lambda_k)^2(\widehat A_\alpha)_{jk}.
\]
We have $|A_1|^2=1/n+|A_1^\circ|^2$, $(\lambda_n-\lambda_1)^2\le2|A_1^\circ|^2$, and $(\lambda_n-\lambda_1)^2\le4\max\{\lambda_1^2,\lambda_n^2\}$. Moreover, \eqref{eq:shape-square-bound} applies to $\Theta_s^{-1}(A)$, whose trace is $\bH/s$. Using these inequalities, $|(\lambda_j-\lambda_k)^2-\frac12(\lambda_n-\lambda_1)^2| \le\frac12(\lambda_n-\lambda_1)^2$, and $D\Phi_u^{i,\eta}(\widehat A)[\widehat A] =2\Phi_u^{i,\eta}(\widehat A)$, we obtain
\begin{equation}\label{eq:mean-product-polarization}
\begin{aligned}
 &D\Phi_u^{i,\eta}(\widehat A) [A_1^2\widehat A+\widehat A A_1^2-2A_1\widehat A A_1-|A_1|^2\widehat A]\\
 &\ge\left(2|A_1|^2-(\lambda_n-\lambda_1)^2\right) [-\Phi_u^{i,\eta}(\widehat A)] -\frac{(\lambda_n-\lambda_1)^2}{2} |D\Phi_u^{i,\eta}(\widehat A)|\,|\widehat A|\\
 &\ge\left(2|A_1|^2-\frac{n+2}{4}(\lambda_n-\lambda_1)^2\right) [-\Phi_u^{i,\eta}(\widehat A)]\\
 &\ge\left(\frac2n-(n-2)\max\{\lambda_1^2,\lambda_n^2\}\right) [-\Phi_u^{i,\eta}(\widehat A)] \ge\frac\eta n[-\Phi_u^{i,\eta}(\widehat A)].
\end{aligned}
\end{equation}
Since $\Theta_s^{-1}(A)\in\cC_i(\eta/2)$, $\tr\Theta_s^{-1}(A)=\bH/s$, and $\widehat{\Theta_s^{-1}(A)}=\widehat A$, \eqref{eq:first-sharp-reaction} and \eqref{eq:second-sharp-reaction} give, respectively,
\[
\begin{aligned}
 &D\Phi_u^{i,\eta}(\Theta_s^{-1}(A))[\Q(\Theta_s^{-1}(A))] -2s^{-2}|A_1|^2\Phi_u^{i,\eta}(\Theta_s^{-1}(A))\\
 &\ge\frac\eta{ns^2}[-\Phi_u^{i,\eta}(\widehat A)].
\end{aligned}
\]
Substituting this inequality and \eqref{eq:mean-product-polarization} in \eqref{eq:deformed-reaction-identity}, and using $|A_1|^2\le|A|^2$, we obtain
\[
\begin{aligned}
 &D\left(\frac{\Phi_u^{i,\eta}\circ\Theta_s^{-1}}H\right)(A)[\Q(A)]\\
 &\ge |A_1|^2\Phi_u^{i,\eta}(\Theta_s^{-1}(A)) +\frac\eta{ns^2}[-\Phi_u^{i,\eta}(\widehat A)] +\frac{\eta(1-s^{-2})}n[-\Phi_u^{i,\eta}(\widehat A)]\\
 &\ge |A|^2\min\{\Phi_u^{i,\eta}(\Theta_s^{-1}(A)),0\} +\frac\eta n[-\Phi_u^{i,\eta}(\widehat A)].
\end{aligned}
\]
The assertion follows by homogeneity.
\end{proof}

\begin{samepage}
\begin{proposition}\label{prop:deformed-hessian}
Let $s\ge1$ and $A\in\cC_{1,\eta/2}(s)$. Then
\[
 -\sum_jD^2\left(\frac{\Phi_u^{1,\eta}\circ\Theta_s^{-1}}{H}\right)(A)[Z_j,Z_j] \ge\frac{|\tr\langle Z,\bH\rangle|^2}{4(n-1)s^2H^3}
\]
for every unit vector $u\in V$ and every $Z\in\Sym^3(V)\otimes N$. For $A\in\cC_{2,\eta/2}(s)$ and unit $u\in\Lambda^2V$, the same estimate holds with $\Phi_u^{1,\eta}$ replaced by $\frac{n-2}{2(n+1)}\Phi_u^{2,\eta}$.
\end{proposition}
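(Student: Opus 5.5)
The plan is to start from the exact second-variation formula \eqref{eq:deformed-second-identity} of Lemma~\ref{lem:deformation-calculus}(iii) and bound each of its three groups of terms from below. By homogeneity we normalize $H=1$ and take the frame $\nu_1=\bH$, so that $\tr\langle Z,\bH\rangle=\tr Z_1$. As in that proof, we first replace $Z_j$ by $Z_j-(\tr Z_{1j})A$. This changes neither side of \eqref{eq:deformed-second-identity}, but it does change $\tr Z_1$. We therefore do \emph{not} use this reduction, and we keep $\tr Z_{1j}$ explicitly inside $\widetilde Z_{1j}$. The case $i=2$ runs in parallel with $i=1$. The normalizing factor $\frac{n-2}{2(n+1)}$ is chosen so that the $H^2$-coefficient of $\frac{n-2}{2(n+1)}\Phi^{2,\eta}_u$ equals that of $\Phi^{1,\eta}_u$, namely $\frac{1-\eta}{n-1}$. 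After this normalization, Lemma~\ref{lem:scalar} shows that $-\frac{n-2}{2(n+1)}\Phi^{2,\eta}_u$ restricted to traceless tensors is bounded below by a multiple of $|\cdot|^2$ at least as good as in the case $i=1$. So I will write the argument for $i=1$ and only track constants for $i=2$.

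\textbf{Step 1 (the $\widetilde Z$ term).} Each $\widetilde Z_j$ is traceless, so Lemma~\ref{lem:scalar} gives
\[
-\Phi^{1,\eta}_u(\widetilde Z_j)\ge|\widetilde Z_{1j}|^2+\sum_{\alpha>1}|\widetilde Z_{\alpha j}|^2 .
\]
The main positive contribution should come from the $\nu_1$-component, since $Z_1=\langle Z,\nu_1\rangle$ is a fully symmetric $3$-tensor. By \eqref{eq:codazzi-trace},
\[
\sum_j|Z_{1j}^\circ|^2\ge\Bigl(\frac3{n+2}-\frac1n\Bigr)|\tr Z_1|^2=\frac{2(n-1)}{n(n+2)}|\tr Z_1|^2 .
\]
The same inequality holds for each $Z_\alpha$ with $\alpha>1$, which controls $|\tr Z_\alpha|^2$ by $\sum_j|Z^\circ_{\alpha j}|^2$.

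\textbf{Step 2 (absorbing the $\tr Z_\alpha$, $\alpha>1$, terms).} The second and third lines of \eqref{eq:deformed-second-identity}, and the last summands of $\widetilde Z_{1j}$ and $\widetilde Z_{\alpha j}$, are all controlled by $\sum_{\alpha>1}|\tr Z_\alpha|^2$. Their coefficients involve:
\begin{itemize}
\item $\Phi^{1,\eta}_u(\widehat A)$ and $(s^2-1)\Phi^{1,\eta}_u(\sum_\alpha(\tr Z_{\alpha j})A_\alpha^\circ\otimes\nu_1)$, which by Lemma~\ref{lem:scalar} and Cauchy--Schwarz are bounded by $n(s^2-1)|\widehat A|^2\sum_{\alpha>1}|\tr Z_\alpha|^2$;
\item $s^{-4}[-\Phi^{1,\eta}_u(A^\circ_1\otimes\nu_1)]\le n s^{-4}|A_1^\circ|^2$;
\item $-\frac12\Phi^{1,\eta}_u(\Theta_s^{-1}(A))$, which is at most $\frac{\eta}{4(n-1)s^2}$ because $\Theta_s^{-1}(A)\in\cC_1(\eta/2)$.
\end{itemize}
Lemma~\ref{lem:family} gives $s^2|\widehat A|^2\le 2/n$, and \eqref{eq:shape-square-bound} applied to $\Theta_s^{-1}(A)$ gives $|A_1^\circ|^2\lesssim s^2/n$. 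Hence all of these coefficients are $O(1)$, uniformly in $s$.

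On the other side, the good term $s^{-2}\sum_{\alpha>1,j}|\widetilde Z_{\alpha j}|^2$ contains $\sum_j|Z^\circ_{\alpha j}|^2\ge\frac{2(n-1)}{n(n+2)}|\tr Z_\alpha|^2$ with coefficient $1$, perturbed by the terms $\tr Z_{1j}\,A^\circ_\alpha$ and $(1-s^{-2})\tr Z_{\alpha j}\,A^\circ_1$. I will expand $|\widetilde Z_{\alpha j}|^2$ and use Young's inequality to show the bad $|\tr Z_\alpha|^2$ terms are absorbed. Any cross terms with $\tr Z_1$ are charged to a fraction of the $\nu_1$ estimate from Step 1. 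If the constants do not close, the fallback is to also use the positive piece $-\frac12\Phi^{1,\eta}_u(\Theta_s^{-1}(A))$ only when it is nonnegative, and otherwise bound it by the $\eta$-margin.

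\textbf{Step 3 (the $\nu_1$ component; main obstacle).} It remains to show
\[
\sum_j|\widetilde Z_{1j}|^2\gtrsim\frac{1}{4(n-1)}|\tr Z_1|^2 ,
\]
after the absorption in Step 2. The difficulty is the perturbation $-(\tr Z_{1j})A_1^\circ$. Its size $|\tr Z_1|^2|A_1^\circ|^2$ can be comparable to the Codazzi gain $\frac{2(n-1)}{n(n+2)}|\tr Z_1|^2$ once $|A_1^\circ|^2\sim 1/n$. So a naive triangle inequality fails, and the $\frac{1}{s^2}$ prefactor rules out using $|A_1^\circ|$ small.

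My first attempt is to minimize $\sum_j|Z^\circ_{1j}-t_jA_1^\circ|^2$ over symmetric $3$-tensors $Z_1$ with prescribed trace vector $t=\tr Z_1$. I would decompose $Z_1$ into its trace part $\frac{3}{n+2}\operatorname{Sym}(t\otimes g)$ and a totally traceless remainder, and exploit that the trace part contributes $Z^\circ_{1j}=\frac1{n+2}(t\otimes e_j+e_j\otimes t)^\circ$. This has a fixed rank-two structure, which cannot align well with $t_jA_1^\circ$ summed over $j$. The target is to compute $\sum_j\langle (t\otimes e_j+e_j\otimes t)^\circ, t_jA_1^\circ\rangle=2\langle A_1^\circ t,t\rangle$ and bound it using $\|A_1^\circ\|_{\op}$, which is controlled by \eqref{eq:shape-square-bound}. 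Combined with $\sum_j|t_jA^\circ_1|^2=|t|^2|A_1^\circ|^2$, this should give a quadratic form in $|t|^2$ whose minimum is at least $\frac{1}{4(n-1)}\cdot\frac{1}{n}$-type constants. Verifying this numerically, using $n\ge10$, is where I expect the real work to lie.
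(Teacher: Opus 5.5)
Your setup matches the paper's: you normalize $H=1$, keep $\tr Z_{1j}$ inside $\widetilde Z_{1j}$ rather than reducing to $\tr Z_{1j}=0$, and use $-\Phi_u^{1,\eta}(\widetilde Z_j)\ge|\widetilde Z_j|^2$. However, Step~2 cannot close as written. The only positive term in \eqref{eq:deformed-second-identity} is $s^{-2}\sum_j[-\Phi_u^{1,\eta}(\widetilde Z_j)]$. Its Codazzi gain per unit of $\sum_{\alpha>1}|\tr Z_\alpha|^2$ is only $\frac{2(n-1)}{n(n+2)}$, so coefficients that are merely ``$O(1)$ uniformly in $s$'' are of no use. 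Several of your individual bounds fail against this budget:
\begin{itemize}
\item $-\Phi_u^{1,\eta}(\widehat A)\le n|\widehat A|^2$ with $s^2|\widehat A|^2\le 2/n$, and $s^{-4}[-\Phi_u^{1,\eta}(A_1^\circ\otimes\nu_1)]\le ns^{-4}|A_1^\circ|^2$, give bad coefficients of size up to $2$ and $1$. Both are off by a factor of order $n$.
\item Your bound on $-\frac12\Phi_u^{1,\eta}(\Theta_s^{-1}(A))$ is an upper bound, but you need a lower bound. This term can be as negative as $-\frac{1-\eta}{2(n-1)s^2}$.
\item $|A_1^\circ|^2\lesssim s^2/n$ is not the right bound. \eqref{eq:shape-square-bound} applied to $\Theta_s^{-1}(A)$ gives $(A_1^\circ)^2+s^2\sum_{\alpha>1}(A_\alpha^\circ)^2\le n^{-2}\Id$. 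With an $s$-growing bound, the perturbation $(s-s^{-1})(\tr Z_{\alpha j})A_1^\circ$ in $\widetilde Z_{\alpha j}$ could never be absorbed.
\end{itemize}
These terms must be treated jointly. After Cauchy--Schwarz on the last line of \eqref{eq:deformed-second-identity}, the total coefficient of $\sum_{\alpha>1}|\tr Z_\alpha|^2$ is
\[
 -\Bigl(\tfrac12\Phi_u^{1,\eta}(\Theta_s^{-1}(A))+s^2\bigl[-\Phi_u^{1,\eta}((\Theta_s^{-1}(A))^\circ)\bigr]\Bigr)+(1-s^{-4})\bigl[-\Phi_u^{1,\eta}(A_1^\circ\otimes\nu_1)\bigr].
\]
The cone inequality for $\Theta_s^{-1}(A)\in\cC_1(\eta/2)$ itself, not Lemma~\ref{lem:scalar}, bounds the bracket by $\frac1{n-1}<\frac{2(n-1)}{n(n+2)}$. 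The positive remainder must be kept, and one is then reduced to
\[
 \sum_j|\widetilde Z_j|^2+(s^2-s^{-2})|A_1^\circ|^2\sum_{\alpha>1}|\tr Z_\alpha|^2\ge\frac1{8(n-1)}|\tr Z_1|^2+\frac{s^2}{n-1}\sum_{\alpha>1}|\tr Z_\alpha|^2 .
\]

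The idea missing from Step~3, which also carries out the absorption of Step~2, is to contract the slices with $e_j$. Since $Z\in\Sym^3(V)\otimes N$, one has $\sum_jZ^\circ_{\alpha j}e_j=\frac{n-1}n\tr Z_\alpha$. Hence
\begin{gather*}
 \sum_j\widetilde Z_{1j}e_j=\Bigl(\frac{n-1}n\Id-A_1^\circ\Bigr)\tr Z_1-(s^2-1)\sum_{\alpha>1}A_\alpha^\circ\tr Z_\alpha,\\
 \sum_j\widetilde Z_{\alpha j}e_j=\frac{s(n-1)}n\tr Z_\alpha-sA_\alpha^\circ\tr Z_1-(s-s^{-1})A_1^\circ\tr Z_\alpha\quad(\alpha>1).
\end{gather*}
For arbitrary traceless slices, $\sum_\alpha|\sum_j\widetilde Z_{\alpha j}e_j|^2\le\frac{(n-1)(n+2)}{2n}\sum_j|\widetilde Z_j|^2$. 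On the leading terms this recovers exactly the Codazzi gain $\frac{2(n-1)}{n(n+2)}$. The difference is that $A_1^\circ$ and $sA_\alpha^\circ$ now act on vectors, so they enter only through operator norms. The matrix bound above makes these a factor of order $n$ smaller than the Frobenius norms you would otherwise pay. Two terms then need specific handling:
\begin{itemize}
\item The coupling $(s^2-1)\sum_{\alpha>1}A_\alpha^\circ\tr Z_\alpha$ in the $\nu_1$-slot has square of order $s^2\sum_{\alpha>1}|\tr Z_\alpha|^2$. It must therefore be charged to the $\alpha>1$ budget, not to $|\tr Z_1|^2$.
\item The term $(s-s^{-1})A_1^\circ\tr Z_\alpha$ is paid for by the retained $(s^2-s^{-2})|A_1^\circ|^2$ term.
\end{itemize}
Your plan of minimizing over $Z_1$ with prescribed trace does not handle this. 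If you split $Z_1$ into its trace part and a totally traceless $W$, the cross term $\sum_j(\tr Z_1)_j\langle W_j,A_1^\circ\rangle$ is in general nonzero, so $W$ cannot be discarded. Nor can the $\nu_1$-slot be estimated in isolation, because of its coupling to the $\tr Z_\alpha$. Contracting with $e_j$ annihilates $W$ automatically and reduces everything to an explicit estimate in the trace vectors $\tr Z_1$ and $\tr Z_\alpha$. That is where $n\ge10$ is used.
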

\end{samepage}

\begin{proof}
By homogeneity, we may assume that $H=1$. Fix an orthonormal basis $e_1,\ldots,e_n$ of $V$ and an orthonormal normal frame $\nu_1,\ldots,\nu_q$ with $\nu_1=\bH$. Let $Z_j=Z(e_j,\cdot,\cdot)$ and let $\widetilde Z_j$ be as in Lemma~\ref{lem:deformation-calculus}(iii).

Suppose first that $i=1$. For the last term in \eqref{eq:deformed-second-identity}, Cauchy--Schwarz in the inner product of Lemma~\ref{lem:phi-derivatives}(iii) gives
\[
 -\Phi_u^{1,\eta}\left( \sum_{\alpha>1}(\tr Z_{\alpha j})A_\alpha^\circ\otimes\nu_1\right) \le[-\Phi_u^{1,\eta}(\widehat A)] \sum_{\alpha>1}|\tr Z_{\alpha j}|^2.
\]
Since $\Theta_s^{-1}(A)\in\cC_1(\eta/2)$ and $H=1$, \eqref{eq:phi1} gives
\[
\begin{aligned}
 &\frac12\Phi_u^{1,\eta}(\Theta_s^{-1}(A)) +s^2[-\Phi_u^{1,\eta}((\Theta_s^{-1}(A))^\circ)]\\
 &=\frac{1-\eta}{2(n-1)s^2} +\left(s^2-\frac12\right) [-\Phi_u^{1,\eta}((\Theta_s^{-1}(A))^\circ)]\\
 &\le\frac{1-\eta/2}{n-1}-\frac{\eta}{4(n-1)s^2} \le\frac1{n-1}.
\end{aligned}
\]
Substituting these inequalities in \eqref{eq:deformed-second-identity} and applying Lemma~\ref{lem:scalar}, we obtain
\begin{equation}\label{eq:deformed-hessian-reduction}
\begin{aligned}
 &-\frac12\sum_jD^2\left(\frac{\Phi_u^{1,\eta}\circ\Theta_s^{-1}}{H}\right)(A)[Z_j,Z_j]\\
 &\ge\frac1{s^2}\sum_j[-\Phi_u^{1,\eta}(\widetilde Z_j)] +(1-s^{-4})[-\Phi_u^{1,\eta}(A_1^\circ\otimes\nu_1)] \sum_{\alpha>1}|\tr Z_\alpha|^2 -\frac1{n-1}\sum_{\alpha>1}|\tr Z_\alpha|^2\\
 &\ge\frac1{s^2}\left(\sum_j|\widetilde Z_j|^2 +(s^2-s^{-2})|A_1^\circ|^2\sum_{\alpha>1}|\tr Z_\alpha|^2\right) -\frac1{n-1}\sum_{\alpha>1}|\tr Z_\alpha|^2.
\end{aligned}
\end{equation}
It is therefore enough to prove
\begin{equation}\label{eq:deformed-hessian-contraction}
 \sum_j|\widetilde Z_j|^2 +(s^2-s^{-2})|A_1^\circ|^2\sum_{\alpha>1}|\tr Z_\alpha|^2 \ge\frac1{8(n-1)}|\tr Z_1|^2 +\frac{s^2}{n-1}\sum_{\alpha>1}|\tr Z_\alpha|^2.
\end{equation}

Since $Z\in\Sym^3(V)\otimes N$, we have $\sum_jZ_{\alpha j}^\circ e_j=(n-1)\tr Z_\alpha/n$, and hence
\begin{equation}\label{eq:deformed-trace-identities}
\begin{aligned}
 \sum_j\widetilde Z_{1j}e_j &=\left(\frac{n-1}{n}\Id-A_1^\circ\right)\tr Z_1 -(s^2-1)\sum_{\alpha>1}A_\alpha^\circ\tr Z_\alpha,\\
 \sum_j\widetilde Z_{\alpha j}e_j &=\frac{s(n-1)}n\tr Z_\alpha-sA_\alpha^\circ\tr Z_1 -(s-s^{-1})A_1^\circ\tr Z_\alpha\quad(\alpha>1).
\end{aligned}
\end{equation}
Since $\tr\Theta_s^{-1}(A)=\bH/s$, applying \eqref{eq:shape-square-bound} to $\Theta_s^{-1}(A)$ gives
\begin{equation}\label{eq:deformed-matrix-bound}
 \frac{n^2}{(n-1)^2} \left((A_1^\circ)^2+s^2\sum_{\alpha>1}(A_\alpha^\circ)^2\right) \le\frac{2n}{(n-1)^2(n-2)}\Id\le\frac1{32}\Id.
\end{equation}
By the first identity in \eqref{eq:deformed-trace-identities}, the inequality $|v-w|^2\ge|v|^2/2-|w|^2$, and Cauchy--Schwarz with \eqref{eq:deformed-matrix-bound}, we obtain
\begin{equation}\label{eq:deformed-first-trace-bound}
\begin{aligned}
 \frac{n^2}{(n-1)^2} \left|\sum_j\widetilde Z_{1j}e_j\right|^2 &\ge\frac12|\tr Z_1|^2 -\left|\frac n{n-1}A_1^\circ\tr Z_1 +\frac{n(s^2-1)}{n-1} \sum_{\alpha>1}A_\alpha^\circ\tr Z_\alpha\right|^2\\
 &\ge\frac{15}{32}|\tr Z_1|^2 -\frac{s^2}{32}\sum_{\alpha>1}|\tr Z_\alpha|^2.
\end{aligned}
\end{equation}
For $\alpha>1$, the second identity in \eqref{eq:deformed-trace-identities} and Cauchy--Schwarz give
\begin{equation}\label{eq:deformed-other-trace-bound}
\begin{aligned}
 &s^2|\tr Z_\alpha|^2\\
 &=\left|\frac n{n-1}\sum_j\widetilde Z_{\alpha j}e_j +\frac{ns}{n-1}A_\alpha^\circ\tr Z_1 +\frac{n(s-s^{-1})}{n-1}A_1^\circ\tr Z_\alpha\right|^2\\
 &\le\left(\frac34+\frac1{16}+\frac18\right) \left[\frac{4n^2}{3(n-1)^2} \left|\sum_j\widetilde Z_{\alpha j}e_j\right|^2 +\frac{16n^2s^2}{(n-1)^2}|A_\alpha^\circ\tr Z_1|^2\right.\\
 &\qquad\left. +\frac{8n^2(s-s^{-1})^2}{(n-1)^2}|A_1^\circ\tr Z_\alpha|^2\right].
\end{aligned}
\end{equation}
Summing \eqref{eq:deformed-other-trace-bound} over $\alpha>1$, using $s^2\sum_{\alpha>1}(A_\alpha^\circ)^2\le(n-1)^2\Id/(32n^2)$ from \eqref{eq:deformed-matrix-bound}, and then applying \eqref{eq:deformed-first-trace-bound}, we obtain
\begin{equation}\label{eq:deformed-trace-estimate}
\begin{aligned}
 &\frac{4n^2}{3(n-1)^2} \sum_\alpha\left|\sum_j\widetilde Z_{\alpha j}e_j\right|^2 +\frac{8n^2(s-s^{-1})^2}{(n-1)^2} \sum_{\alpha>1}|A_1^\circ\tr Z_\alpha|^2\\
 &\ge\frac{4n^2}{3(n-1)^2} \left|\sum_j\widetilde Z_{1j}e_j\right|^2 +\frac{16s^2}{15}\sum_{\alpha>1}|\tr Z_\alpha|^2 -\frac12|\tr Z_1|^2\\
 &\ge\frac18|\tr Z_1|^2 +\frac{41s^2}{40}\sum_{\alpha>1}|\tr Z_\alpha|^2.
\end{aligned}
\end{equation}
For every $w\in V$, tracelessness of $\widetilde Z_{\alpha j}$ and Cauchy--Schwarz give
\[
\begin{aligned}
 \left|\left\langle\sum_j\widetilde Z_{\alpha j}e_j,w\right\rangle\right|^2 &=\left|\sum_j\tr\left[\widetilde Z_{\alpha j} \left(\frac{e_j\otimes w+w\otimes e_j}{2} -\frac{\langle e_j,w\rangle}{n}\Id\right)\right]\right|^2\\
 &\le\frac{(n-1)(n+2)}{2n}|w|^2 \sum_j|\widetilde Z_{\alpha j}|^2.
\end{aligned}
\]
Taking the supremum over $|w|=1$ and summing over $\alpha$ gives the first inequality below. The second follows from $\tr A_1^\circ=0$:
\begin{equation}\label{eq:deformed-contraction-bounds}
\begin{aligned}
 \sum_\alpha\left|\sum_j\widetilde Z_{\alpha j}e_j\right|^2 &\le\frac{(n-1)(n+2)}{2n}\sum_j|\widetilde Z_j|^2,\\
 \sum_{\alpha>1}|A_1^\circ\tr Z_\alpha|^2 &\le\frac{n-1}{n}|A_1^\circ|^2 \sum_{\alpha>1}|\tr Z_\alpha|^2.
\end{aligned}
\end{equation}
Since $2n(n+2)\le3(n-1)^2$, $8n\le(n-1)^2$, and $(s-s^{-1})^2\le s^2-s^{-2}$, \eqref{eq:deformed-contraction-bounds} and \eqref{eq:deformed-trace-estimate} give
\[
\begin{aligned}
 &\sum_j|\widetilde Z_j|^2 +(s^2-s^{-2})|A_1^\circ|^2\sum_{\alpha>1}|\tr Z_\alpha|^2\\
 &\ge\frac{2n(n+2)}{3(n-1)^2}\sum_j|\widetilde Z_j|^2 +\frac{8n(s-s^{-1})^2}{(n-1)^2}|A_1^\circ|^2 \sum_{\alpha>1}|\tr Z_\alpha|^2\\
 &\ge\frac{4n^2}{3(n-1)^3} \sum_\alpha\left|\sum_j\widetilde Z_{\alpha j}e_j\right|^2 +\frac{8n^2(s-s^{-1})^2}{(n-1)^3} \sum_{\alpha>1}|A_1^\circ\tr Z_\alpha|^2\\
 &\ge\frac1{8(n-1)}|\tr Z_1|^2 +\frac{s^2}{n-1}\sum_{\alpha>1}|\tr Z_\alpha|^2.
\end{aligned}
\]
Substituting \eqref{eq:deformed-hessian-contraction} in \eqref{eq:deformed-hessian-reduction}, we obtain
\[
\begin{aligned}
 &-\frac12\sum_jD^2\left(\frac{\Phi_u^{1,\eta}\circ\Theta_s^{-1}}{H}\right)(A)[Z_j,Z_j]\\
 &\ge\frac1{s^2}\left(\frac1{8(n-1)}|\tr Z_1|^2 +\frac{s^2}{n-1}\sum_{\alpha>1}|\tr Z_\alpha|^2\right) -\frac1{n-1}\sum_{\alpha>1}|\tr Z_\alpha|^2\\
 &=\frac1{8(n-1)s^2}|\tr Z_1|^2.
\end{aligned}
\]
For arbitrary $H>0$, we have $\tr\langle Z,\bH\rangle=H\tr Z_1$ in a normal frame with $\nu_1=\bH/H$. Homogeneity therefore gives the asserted estimate for $i=1$.

For $i=2$, replace $\Phi_u^{1,\eta}$ and $\Phi_u^{1,\eta/2}$ by $\frac{n-2}{2(n+1)}\Phi_u^{2,\eta}$ and $\frac{n-2}{2(n+1)}\Phi_u^{2,\eta/2}$. The terms involving $H^2$ are unchanged, and Lemma~\ref{lem:scalar} gives $-\frac{n-2}{2(n+1)}\Phi_u^{2,\eta}(X)\ge|X|^2$ for traceless $X$. The preceding calculation therefore gives the same lower bound for $\frac{n-2}{2(n+1)}\Phi_u^{2,\eta}$, and homogeneity proves the assertion for $i=2$.
\end{proof}

\section{Construction of pinching sets}\label{sec:pinching}
We now use Propositions~\ref{prop:deformed-reaction} and~\ref{prop:deformed-hessian} to construct pinching sets in the sense of Definition~\ref{def:pinching}, following \cite[Section~8.3]{BrendleBook}. Throughout this section, we assume that $n\ge10$, $i\in\{1,2\}$, $0<\eta\le1/2$, and $H>0$. As before, $u$ is a unit vector in $V$ if $i=1$, and a unit two-form in $\Lambda^2V$ if $i=2$. 

For $h\ge0$, the tensor $A+\frac{h}{H}\Id\otimes\bH$ has trace $(1+nh/H)\bH$ and traceless part $A^\circ$. Hence
\begin{equation}\label{eq:shifted-defining-functions}
\begin{aligned}
 \Phi_u^{1,\eta}\left(\Theta_s^{-1}\left( A+\frac{h}{H}\Id\otimes\bH\right)\right) &=\Phi_u^{1,\eta}(\Theta_s^{-1}(A)) +\frac{n(1-\eta)h(2H+nh)}{(n-1)s^2},\\
 \Phi_u^{2,\eta}\left(\Theta_s^{-1}\left( A+\frac{h}{H}\Id\otimes\bH\right)\right) &=\Phi_u^{2,\eta}(\Theta_s^{-1}(A)) +\frac{2n(n+1)(1-\eta)h(2H+nh)}{(n-1)(n-2)s^2}.
\end{aligned}
\end{equation}

\subsection{Preserved sets for the deformed cones}

\begin{lemma}\label{lem:shifted-estimate}
Let $s\ge1$ and $h>0$. Suppose that $H>0$ and $A+\frac{h}{H}\Id\otimes\bH\in\cC_{i,\eta}(s)$.
\begin{enumerate}[label=(\roman*)]
\item If $H\le h$, then
\[
 A+\frac{2h}{H}\Id\otimes\bH \in\cC_{i,\eta}\bigl((1+\eta/8)s\bigr).
\]
\begin{samepage}
\item If
\[
 A+\frac{2h}{H}\Id\otimes\bH \notin\operatorname{Int}\cC_{i,\eta}\bigl((1+\eta/8)s\bigr),
\]
then $A\in\cC_{i,\eta/2}\bigl((1+\eta/8)s\bigr)$.
\end{samepage}
\end{enumerate}
\end{lemma}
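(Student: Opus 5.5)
The plan is to reduce both parts to a scalar inequality by splitting each defining function along the mean curvature direction. By \eqref{eq:deformed-splitting} and \eqref{eq:shifted-defining-functions}, only the $H^2$ coefficient of $\Phi_u^{i,\eta}$ depends on $\eta$. Write $c_1=\frac1{n-1}$ and $c_2=\frac{2(n+1)}{(n-1)(n-2)}$ for the two families. For a unit $u$, set
\[
 P_u=-\Phi_u^{i,\eta}(A_1^\circ\otimes\nu_1)\ge0,\qquad R_u=-\Phi_u^{i,\eta}(\widehat A)\ge0,
\]
which are independent of $\eta$ and nonnegative by Lemma~\ref{lem:scalar}, since the relevant tensors are traceless. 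Then for $k\ge0$, $\sigma\ge1$ and $0\le\eta'\le1$,
\[
 \Phi_u^{i,\eta'}\Bigl(\Theta_\sigma^{-1}\bigl(A+\tfrac{k}{H}\Id\otimes\bH\bigr)\Bigr)
 =\sigma^{-2}\Bigl[c_i(1-\eta')(H+nk)^2-P_u\Bigr]-R_u .
\]
So the statement ``$A+\frac kH\Id\otimes\bH\in\cC_{i,\eta'}(\sigma)$'' is equivalent to $c_i(1-\eta')(H+nk)^2\ge P_u+\sigma^2R_u$ for every unit $u$. The hypothesis reads
\[
 c_i(1-\eta)(H+nh)^2\ge P_u+s^2R_u\quad\text{for all }u. \tag{$*$}
\]
Put $s'=(1+\eta/8)s$.

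For (i), I would fix $u$ and use $H\le h$. Then $(H+2nh)/(H+nh)\ge(1+2n)/(1+n)$, and hence $(H+2nh)^2\ge3(H+nh)^2$ for $n\ge10$. Combining with $(*)$ gives
\[
 c_i(1-\eta)(H+2nh)^2-P_u\ge3(P_u+s^2R_u)-P_u\ge3s^2R_u\ge s'^2R_u,
\]
since $(1+\eta/8)^2\le(17/16)^2<3$. This holds for every $u$, which is (i).

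For (ii), failure of interior membership produces a unit $u$ with $c_i(1-\eta)(H+2nh)^2-P_u\le s'^2R_u$. Subtracting $(*)$ at this same $u$ gives
\[
 c_i(1-\eta)\bigl[(H+2nh)^2-(H+nh)^2\bigr]\le\bigl((1+\eta/8)^2-1\bigr)s^2R_u\le\bigl((1+\eta/8)^2-1\bigr)c_i(1-\eta)(H+nh)^2,
\]
where the last step again uses $(*)$. Writing $x=nh/H$, this says $1+2x\le(1+\eta/8)(1+x)$, that is, $x\le\eta/(8-\eta)$. Now let $v$ be an arbitrary unit. By $(*)$ at $v$,
\[
 P_v+s'^2R_v\le(1+\eta/8)^2(P_v+s^2R_v)\le(1+\eta/8)^2(1+x)^2(1-\eta)\,c_iH^2\le\Bigl(\tfrac{8+\eta}{8-\eta}\Bigr)^2(1-\eta)\,c_iH^2 .
\]
It then remains to verify the elementary inequality $\bigl(\frac{8+\eta}{8-\eta}\bigr)^2(1-\eta)\le1-\eta/2$ on $(0,1/2]$. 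Its expansion is $1-\eta/2-\frac38\eta^2+O(\eta^3)$, and at $\eta=1/2$ the left side is about $0.64<0.75$. I would prove it for instance by clearing denominators and checking the sign of a cubic on $[0,1/2]$. Granting this, $c_i(1-\eta/2)H^2-P_v\ge s'^2R_v$ for all $v$, i.e.\ $A\in\cC_{i,\eta/2}(s')$.

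The main obstacle is the bookkeeping in (ii). The $u$ witnessing non-interiority differs from the arbitrary $v$ to be checked, so the argument must first extract the scalar bound on $nh/H$ from the witness $u$ alone, and only then transfer it to every $v$ through $(*)$. The final numerical inequality in $\eta$ is the only place where the specific factor $1+\eta/8$ and the restriction $\eta\le1/2$ are used.
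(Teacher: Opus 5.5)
Your proposal is correct and follows essentially the same route as the paper: the same splitting of the defining functions (your inequality $P_v+s'^2R_v\le(1+\eta/8)^2(P_v+s^2R_v)$ is the paper's comparison of the traceless parts of $\Theta_{s'}^{-1}(A)$ and $\Theta_s^{-1}(A)$), the same bound $nh/H\le\eta/(8-\eta)$ extracted from the witness $u$, and the same closing inequality, which the paper settles by the identity $(1-\eta/2)(8-\eta)^2-(1-\eta)(8+\eta)^2=\tfrac12\eta^2(48+\eta)$. The only difference is that you prove (i) directly via $(H+2nh)^2\ge3(H+nh)^2$, whereas the paper obtains (i) as the contrapositive of the ratio bound: non-interiority forces $nh/H\le1/15$, hence $H>h$.
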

\begin{proof}
Suppose first that $i=1$ and $A+\frac{2h}{H}\Id\otimes\bH\notin\operatorname{Int} \cC_{1,\eta}((1+\eta/8)s)$. Then \eqref{eq:shifted-defining-functions} gives a unit vector $u$ such that
\[
 \Phi_u^{1,\eta}(\Theta_{(1+\eta/8)s}^{-1}(A)) +\frac{4n(1-\eta)h(H+nh)}{(n-1)(1+\eta/8)^2s^2}\le0.
\]
On the other hand, since $A+\frac{h}{H}\Id\otimes\bH\in\cC_{1,\eta}(s)$, \eqref{eq:deformed-splitting} and Lemma~\ref{lem:scalar} give
\begin{equation}\label{eq:shifted-cone-bound}
\begin{aligned}
 &\frac{(1-\eta)H^2}{(n-1)(1+\eta/8)^2s^2} -\Phi_v^{1,\eta}(\Theta_{(1+\eta/8)s}^{-1}(A))\\
 &=-\Phi_v^{1,\eta}((\Theta_{(1+\eta/8)s}^{-1}(A))^\circ)\\
 &\le-\Phi_v^{1,\eta}((\Theta_s^{-1}(A))^\circ)\\
 &\le\frac{1-\eta}{(n-1)s^2}(H+nh)^2 \qquad\text{for every unit }v.
\end{aligned}
\end{equation}
In particular, taking $v=u$ gives
\[
 \frac{(1-\eta)(H+2nh)^2}{(n-1)(1+\eta/8)^2s^2} \le\frac{(1-\eta)H^2}{(n-1)(1+\eta/8)^2s^2} -\Phi_u^{1,\eta}(\Theta_{(1+\eta/8)s}^{-1}(A)) \le\frac{1-\eta}{(n-1)s^2}(H+nh)^2.
\]
It follows that $H+2nh\le(1+\eta/8)(H+nh)$, so
\begin{equation}\label{eq:shifted-h-ratio}
 \frac{nh}{H}\le\frac{\eta}{8-\eta}\le\frac1{15}.
\end{equation}
Thus $H>h$, which proves (i). Using \eqref{eq:shifted-cone-bound} and \eqref{eq:shifted-h-ratio}, we obtain, for every unit vector $v$,
\[
\begin{aligned}
 \frac{\Phi_v^{1,\eta/2}(\Theta_{(1+\eta/8)s}^{-1}(A))}{H^2} &\ge\frac1{(n-1)(1+\eta/8)^2s^2}\left(1-\frac\eta2 -(1-\eta)(1+\eta/8)^2\left(1+\frac{nh}{H}\right)^2\right)\\
 &\ge\frac1{(n-1)(1+\eta/8)^2s^2}\left(1-\frac\eta2 -(1-\eta)\frac{(8+\eta)^2}{(8-\eta)^2}\right)\\
 &=\frac{\eta^2(48+\eta)}{2(n-1)(1+\eta/8)^2s^2(8-\eta)^2}>0.
\end{aligned}
\]
This proves (ii). For $i=2$, replace $\Phi_u^{1,\eta}$ and $\Phi_u^{1,\eta/2}$ by $\frac{n-2}{2(n+1)}\Phi_u^{2,\eta}$ and $\frac{n-2}{2(n+1)}\Phi_u^{2,\eta/2}$, respectively. The same calculation proves (i), (ii), and \eqref{eq:shifted-h-ratio}.
\end{proof}

\begin{lemma}\label{lem:improvement}
Let $Y\subset\{H>0\}$ be invariant under the natural $O(V)\times O(N)$ action. Suppose that $A\in Y$ is preserved by mean curvature flow of closed immersions, and satisfies
\[
 Y\subset\left\{A: A+\frac{h}{H}\Id\otimes\bH\in\cC_{i,\eta}(s)\right\}
\]
for some $s\ge1$ and $h>0$. Then the set
\[
 Y'=\left\{A\in Y: A+\frac{2h}{H}\Id\otimes\bH \in\cC_{i,\eta}\bigl((1+\eta/8)s\bigr)\right\}
\]
is invariant under the natural $O(V)\times O(N)$ action, and $A\in Y'$ is preserved by mean curvature flow of closed immersions. Moreover, we have
\[
 \{A\in Y:H\le h\}\subset Y'.
\]
\end{lemma}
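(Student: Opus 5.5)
Invariance is immediate: $Y$ is invariant, the map $A\mapsto A+\frac{2h}{H}\Id\otimes\bH$ commutes with the natural $O(V)\times O(N)$ action, and $\cC_{i,\eta}(s')$ is invariant by Lemma~\ref{lem:family}. The inclusion $\{A\in Y:H\le h\}\subset Y'$ is Lemma~\ref{lem:shifted-estimate}(i), since every $A\in Y$ satisfies $A+\frac hH\Id\otimes\bH\in\cC_{i,\eta}(s)$. The substance is preservation. I would apply Proposition~\ref{prop:maximum} with the given $Y$, with $W$ the unit sphere in $V$ or $\Lambda^2V$, and with $s'=(1+\eta/8)s$ and
\[
 \Phi_u(A)=\frac{\Phi_u^{i,\eta}\bigl(\Theta_{s'}^{-1}(A+\tfrac{2h}{H}\Id\otimes\bH)\bigr)}{H}
 =\frac{\Phi_u^{i,\eta}(\Theta_{s'}^{-1}(A))}{H}+\frac{c_{i}(1-\eta)\,2h(H+nh)}{s'^2H},
\]
where the identity is \eqref{eq:shifted-defining-functions} (with $c_1=2n/(n-1)$, $c_2=4n(n+1)/[(n-1)(n-2)]$). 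These are smooth on a neighbourhood of $Y\subset\{H>0\}$ and equivariant, and $Y'=\{A\in Y:\min_u\Phi_u\ge0\}$. For $i=2$ I would rescale by $\frac{n-2}{2(n+1)}$ to match Proposition~\ref{prop:deformed-hessian}.

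I need \eqref{eq:maximum-hypothesis} at points of $Y$ where $\Phi_u=\min_v\Phi_v\le0$. At such a point $A+\frac{2h}{H}\Id\otimes\bH\notin\Int\cC_{i,\eta}(s')$, so Lemma~\ref{lem:shifted-estimate}(ii) gives $A\in\cC_{i,\eta/2}(s')$ and, by \eqref{eq:shifted-h-ratio}, $nh/H\le1/15$. Moreover $u$ minimizes $\Phi_v^{i,\eta}(\Theta_{s'}^{-1}(A))$, since the shift term is independent of $u$. Hence Proposition~\ref{prop:deformed-reaction} applies to the first summand:
\[
 D\Bigl(\tfrac{\Phi_u^{i,\eta}\circ\Theta_{s'}^{-1}}{H}\Bigr)(A)[\Q(A)]\ge|A|^2\min\Bigl\{\tfrac{\Phi_u^{i,\eta}(\Theta_{s'}^{-1}(A))}{H},0\Bigr\}+\tfrac{\eta H}{n}[-\Phi_u^{i,\eta}(\widehat A)].
\]
Proposition~\ref{prop:deformed-hessian} gives a gradient gain $|\tr\langle Z,\bH\rangle|^2/(4(n-1)s'^2H^3)$ from the Hessian of the first summand.

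The main obstacle is the shift term $g(A)=\frac{2c(1-\eta)h}{s'^2}\bigl(1+\frac{nh}{H}\bigr)$, which is a function of $H$ alone. Its reaction derivative is $-\frac{2c(1-\eta)nh^2}{s'^2H^2}DH[\Q(A)]=-\frac{2c(1-\eta)nh^2}{s'^2H^3}|A^{\bH}|^2$, which is negative. Its second variation is a multiple of $-D^2(H^{-1})$, so the term $-\sum_jD^2g[Z_j,Z_j]$ is bounded below by $-Ch^2|\tr Z|^2/(s'^2H^3)$, again with a bad sign. I would absorb both using $nh/H\le1/15$.

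For the reaction term, I would write $\min\{\Phi_u,0\}=\Phi_u$ and bound $|A|^2\Phi_u\le|A|^2\frac{\Phi_u^{i,\eta}(\Theta_{s'}^{-1}(A))}{H}+|A|^2 g$. The surplus $|A|^2g\ge\frac{2c(1-\eta)h}{s'^2}\frac{|A^{\bH}|^2}{H^2}$ then dominates the loss, because $nh/H\le1/15<1$. Taking $c=2(n-1)$ or similar in Proposition~\ref{prop:maximum} leaves room for this.

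For the gradient term, the components of $\tr Z$ orthogonal to $\bH$ enter $D^2H$ only through $\sum_{\alpha>1}|\tr Z_\alpha|^2/H^3$, and the component along $\bH$ does not enter at all, since $H$ is homogeneous. I would therefore recheck whether Proposition~\ref{prop:deformed-hessian} in fact yields control of $\sum_{\alpha>1}|\tr Z_\alpha|^2$. Alternatively, I would note that $-D^2(H^{-1})[Z,Z]=H^{-3}\bigl(|\tr Z|^2-3\langle\tr Z,\nu\rangle^2\bigr)$, whose tangential part has a favorable sign, so only the $\nu_1$ component $\langle\tr Z,\nu\rangle^2=|\tr\langle Z,\bH\rangle|^2/H^2$ contributes badly. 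That component is absorbed by the Hessian gain $|\tr\langle Z,\bH\rangle|^2/(4(n-1)s'^2H^3)$ once $h/H\lesssim1/n^2$ fits. If the constants fail, I would return to the proof of \eqref{eq:deformed-hessian-contraction}, which has slack from the $s^2\sum_{\alpha>1}$ terms. This sign bookkeeping is the step I expect to be delicate.
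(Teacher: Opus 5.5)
There is a genuine gap: you have the sign of the $|A|^2g$ term backwards, and as a result you never use the hypothesis on $Y$ where it is really needed. The rest is sound. Invariance and the inclusion $\{A\in Y:H\le h\}\subset Y'$ are fine. The gradient terms are also fine: with $\nu_1=\bH/H$ one has $\sum_jD^2(H^{-1})[Z_j,Z_j]=H^{-3}\bigl(2|\tr Z_1|^2-\sum_{\alpha>1}|\tr Z_\alpha|^2\bigr)$, so only $|\tr Z_1|^2$ is bad. Proposition~\ref{prop:deformed-hessian} absorbs it once $8(nh/H)^2<1/4$, which \eqref{eq:shifted-h-ratio} provides; you do not need $h/H\lesssim1/n^2$.

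The problem is the reaction term. Write $\Phi_u=f+g$ with $f=\Phi_u^{i,\eta}(\Theta_{s'}^{-1}(A))/H<0$. Proposition~\ref{prop:deformed-reaction} together with $|A|^2f=|A|^2\Phi_u-|A|^2g$ gives
\[
 D\Phi_u(A)[\Q(A)]\ge|A|^2\Phi_u+\frac{\eta H}{n}[-\Phi_u^{i,\eta}(\widehat A)]-|A|^2g+Dg(A)[\Q(A)].
\]
So $|A|^2g$ is a second loss, not a surplus. Enlarging $c$ does not help, because at a point with $\Phi_u=0$ the required bound $c|A|^2\Phi_u$ is $0$ for every $c$. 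What you actually need is
\[
 \frac{\eta H}{n}[-\Phi_u^{i,\eta}(\widehat A)]\ge|A|^2g+|Dg(A)[\Q(A)]|,
\]
whose right side is of order $hH^2/s'^2$; for $i=1$ it is at most $8(1-\eta)hH(H+2nh)/[(n-1)s'^2]$. Your proposal has no lower bound for $-\Phi_u^{i,\eta}(\widehat A)$, and none follows from what you used. A tensor with $\widehat A=0$, $\Phi_u=0$ and $nh/H$ small lies in $\cC_{i,\eta/2}(s')$, yet the left side vanishes while the right side is positive. Such tensors are excluded only by the hypothesis on $Y$.

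The missing step is a quantitative use of $Y\subset\{A+\frac hH\Id\otimes\bH\in\cC_{i,\eta}(s)\}$, beyond its role in Lemma~\ref{lem:shifted-estimate}. At the point in question, \eqref{eq:shifted-defining-functions} gives both
\[
 \Phi_u^{1,\eta}(\Theta_s^{-1}(A))\ge-\frac{n(1-\eta)h(2H+nh)}{(n-1)s^2},\qquad \Phi_u^{1,\eta}(\Theta_{s'}^{-1}(A))\le-\frac{4n(1-\eta)h(H+nh)}{(n-1)s'^2}.
\]
By \eqref{eq:deformed-splitting}, the $A^{\bH}$-parts of these two quantities differ exactly by the factor $(s'/s)^2$. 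They therefore cancel in $(s/s')^2\Phi_u^{1,\eta}(\Theta_s^{-1}(A))-\Phi_u^{1,\eta}(\Theta_{s'}^{-1}(A))$, leaving
\[
 \frac{\eta(16+\eta)}{(8+\eta)^2}[-\Phi_u^{1,\eta}(\widehat A)]\ge\frac{n(1-\eta)h(2H+3nh)}{(n-1)s'^2}.
\]
This is exactly where the passage from $(s,h)$ to $((1+\eta/8)s,2h)$ is used, and it forces $\widehat A\ne0$ at such points. Combined with $(8+\eta)^2/(16+\eta)\ge4+\eta/2\ge4+2nh/H$, it gives $D\Phi_u(A)[\Q(A)]-|A|^2\Phi_u\ge6n^2(1-\eta)h^3/[(n-1)s'^2]\ge0$, so $c=1$ works. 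The case $i=2$ is identical after your rescaling by $\frac{n-2}{2(n+1)}$. Without this step your argument does not close.
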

\begin{proof}
Lemma~\ref{lem:family} implies that $Y'$ is invariant under the natural $O(V)\times O(N)$ action, and Lemma~\ref{lem:shifted-estimate}(i) gives $\{A\in Y:H\le h\}\subset Y'$. It remains to prove preservation under mean curvature flow. Suppose first that $i=1$, and define
\[
\begin{aligned}
 \Psi_u(A) &=\frac{\Phi_u^{1,\eta}(\Theta_{(1+\eta/8)s}^{-1}(A))}{H}
 +\frac{4n(1-\eta)h}{(n-1)(1+\eta/8)^2s^2} \left(1+\frac{nh}{H}\right),\qquad |u|=1.
\end{aligned}
\]
By \eqref{eq:shifted-defining-functions}, $Y'=\{A\in Y:\min_{|u|=1}\Psi_u(A)\ge0\}$.

Let $A\in Y$, let $u$ be unit such that $\Psi_u(A)=\min_{|v|=1}\Psi_v(A)\le0$, and let $Z\in\Sym^3(V)\otimes N$. Fix an orthonormal normal frame $\nu_1=\bH/H,\nu_2,\ldots,\nu_q$. Lemma~\ref{lem:shifted-estimate}(ii) gives $A\in\cC_{1,\eta/2}\bigl((1+\eta/8)s\bigr)$, and
\[
 \Phi_u^{1,\eta}(\Theta_{(1+\eta/8)s}^{-1}(A)) =\min_{|v|=1}\Phi_v^{1,\eta}(\Theta_{(1+\eta/8)s}^{-1}(A))<0.
\]
Using the assumption on $Y$, the inequality $\Psi_u(A)\le0$, and \eqref{eq:deformed-splitting}, we obtain
\[
\begin{aligned}
 \frac{n(1-\eta)h(2H+3nh)}{(n-1)(1+\eta/8)^2s^2}
 &\le\frac{\Phi_u^{1,\eta}(\Theta_s^{-1}(A))}{(1+\eta/8)^2} -\Phi_u^{1,\eta}(\Theta_{(1+\eta/8)s}^{-1}(A))\\
 &=\frac{\eta(16+\eta)}{(8+\eta)^2}[-\Phi_u^{1,\eta}(\widehat A)].
\end{aligned}
\]
Applying \eqref{eq:coarse} to $\Theta_{(1+\eta/8)s}^{-1}(A)$ gives
\[
 \frac{|A^{\bH}|^2}{H^2}\le|A|^2 \le(1+\eta/8)^2s^2|\Theta_{(1+\eta/8)s}^{-1}(A)|^2 \le\frac{2H^2}{n}.
\]
Since $D(H^{-1})(A)[\Q(A)]=-|A^{\bH}|^2/H^3$ and \eqref{eq:shifted-h-ratio} gives $(8+\eta)^2/(16+\eta)\ge4+\eta/2\ge4+2nh/H$, Proposition~\ref{prop:deformed-reaction} implies
\[
\begin{aligned}
 &D\Psi_u(A)[\Q(A)]-|A|^2\Psi_u(A)\\
 &\ge\frac{\eta H}{n}[-\Phi_u^{1,\eta}(\widehat A)] -\frac{8(1-\eta)hH(H+2nh)}{(n-1)(1+\eta/8)^2s^2}\\
 &\ge\frac{(1-\eta)hH}{(n-1)(1+\eta/8)^2s^2} \left[\frac{(8+\eta)^2}{16+\eta}(2H+3nh)-8(H+2nh)\right]\\
 &\ge\frac{(1-\eta)hH}{(n-1)(1+\eta/8)^2s^2} \left[\left(4+\frac{2nh}{H}\right)(2H+3nh)-8(H+2nh)\right]\\
 &=\frac{6n^2(1-\eta)h^3}{(n-1)(1+\eta/8)^2s^2}\ge0.
\end{aligned}
\]
Moreover,
\[
 \sum_jD^2(H^{-1})(A)[Z_j,Z_j] =\frac1{H^3}\left(2|\tr Z_1|^2 -\sum_{\alpha>1}|\tr Z_\alpha|^2\right),
\]
so Proposition~\ref{prop:deformed-hessian} and \eqref{eq:shifted-h-ratio} give
\[
 -\sum_jD^2\Psi_u(A)[Z_j,Z_j] \ge\frac1{(n-1)(1+\eta/8)^2s^2H}\left(\frac14 -8(1-\eta)\frac{n^2h^2}{H^2}\right)|\tr Z_1|^2\ge0.
\]
Consequently,
\[
 D\Psi_u(A)[\Q(A)]-\sum_jD^2\Psi_u(A)[Z_j,Z_j] \ge|A|^2\Psi_u(A).
\]
The functions $\Psi_u$ are smooth on $\{H>0\}$, depend continuously on $u$, and satisfy $\Psi_{gu}(gA)=\Psi_u(A)$. Proposition~\ref{prop:maximum} therefore proves preservation with $c=1$. For $i=2$, the same proof applies to $\frac{n-2}{2(n+1)}\Phi_u^{2,\eta}$ and $\frac{n-2}{2(n+1)}\Phi_u^{2,\eta/2}$, using Proposition~\ref{prop:deformed-hessian} for $\frac{n-2}{2(n+1)}\Phi_u^{2,\eta}$.
\end{proof}

\subsection{Pinching sets}
We define
\[
 \cF_{i,\eta}=\bigcap_{s\ge1}\cC_{i,\eta}(s).
\]
If $A\in\cF_{i,\eta}$, then $A\in\cC_{i,\eta}(1)=\cC_i(\eta)\cap\{H>0\}$. By Lemma~\ref{lem:family}, $s^2|\widehat A|^2\le2H^2/n$ for every $s\ge1$, and hence $\widehat A=0$. Conversely, if $A\in\cC_i(\eta)$, $H>0$, and $\widehat A=0$, then $\Theta_s^{-1}(A)=A/s\in\cC_i(\eta)$ for every $s\ge1$. Thus
\[
 \cF_{i,\eta}=\{A\in\cC_i(\eta):H>0,\ \widehat A=0\}.
\]
This set is closed in $\{H>0\}$ and invariant under positive scalar multiplication and the natural $O(V)\times O(N)$ action. Every $A\in\cF_{i,\eta}$ has the form $A=(A^{\bH}/H^2)\otimes\bH$, so its normal image is one-dimensional. Moreover, Cauchy--Schwarz and \eqref{eq:coarse} give
\[
 H^4=(\tr A^{\bH})^2 \le\operatorname{rank}(A^{\bH})|A^{\bH}|^2 \le\frac2n\operatorname{rank}(A^{\bH})H^4.
\]
Consequently, $\operatorname{rank}A^{\bH}\ge n/2\ge2$, and $\cF_{i,\eta}$ satisfies the assumptions in Definition~\ref{def:pinching}.

\begin{proposition}\label{prop:pinching-construction}
Let $\mathcal K$ be a compact subset of $\cC_i(\eta)\cap\{H>0\}$. There exists a $\cF_{i,\eta}$-pinching set $\mathcal P$ such that $\mathcal K\subset\mathcal P\subset\cC_i(\eta)\cap\{H>0\}$.
\end{proposition}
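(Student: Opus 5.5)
We follow Brendle's construction \cite[Section~8.3]{BrendleBook}, iterating Lemma~\ref{lem:improvement}. Set $s_k=(1+\eta/8)^k$ for $k\ge0$. We will build positive numbers $h_0,h_1,h_2,\ldots$ and nested sets
\[
 Y_0\supset Y_1\supset Y_2\supset\cdots,\qquad Y_k=\left\{A\in Y_{k-1}:A+\frac{h_k}{H}\Id\otimes\bH\in\cC_{i,\eta}(s_k)\right\}.
\]
Each $Y_k$ will be $O(V)\times O(N)$-invariant, preserved by mean curvature flow of closed immersions, and contain $\mathcal K$. At the end we take $\mathcal P=\bigcap_kY_k$.

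For the base step, let $Y_0=\cC_i(\eta)\cap\{H>0\}$. This set is invariant, and it is preserved: Corollary~\ref{cor:preserved} preserves $\cC_i(\eta)$, and positivity of $H$ is preserved by Proposition~\ref{prop:strict} together with \eqref{eq:coarse}, since $H=0$ forces $A=0$. Take $h_0>0$ arbitrary. By \eqref{eq:shifted-defining-functions} the added terms are nonnegative, so $Y_0$ lies in the set required by Lemma~\ref{lem:improvement} with $s=s_0=1$.

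For the inductive step, suppose $Y_{k}\subset\{A:A+\frac{h_k}{H}\Id\otimes\bH\in\cC_{i,\eta}(s_k)\}$. Lemma~\ref{lem:improvement} shows that
\[
 Y'=\left\{A\in Y_k:A+\frac{2h_k}{H}\Id\otimes\bH\in\cC_{i,\eta}(s_{k+1})\right\}
\]
is invariant and preserved. We want $Y'$ to contain $\mathcal K$, so we enlarge the shift: set $h_{k+1}\ge2h_k$ large enough that $A+\frac{h_{k+1}}{H}\Id\otimes\bH\in\cC_{i,\eta}(s_{k+1})$ for all $A\in\mathcal K$. This is possible because of \eqref{eq:shifted-defining-functions}. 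On the compact set $\mathcal K$ we have $H\ge c>0$ and $|A|\le C$, while the added term $\frac{n(1-\eta)h(2H+nh)}{(n-1)s^2}$ grows quadratically in $h$ and dominates $-\Phi_u^{i,\eta}(\Theta_{s}^{-1}(A))$ uniformly in $u$ and $A\in\mathcal K$.

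The monotonicity in $h$ also gives the inclusion $Y'\subset Y_{k+1}$. Here $Y_{k+1}$ is defined with $h_{k+1}$ in place of $2h_k$, so to keep $Y_{k+1}$ preserved we instead apply Lemma~\ref{lem:improvement} with $h=h_{k+1}/2\ge h_k$. This is legitimate because the hypothesis with $h_k$ implies the hypothesis with the larger $h_{k+1}/2$, again by the same monotonicity (the added term is increasing in $h$). The lemma then gives preservation of $Y_{k+1}$ directly, and $\mathcal K\subset Y_{k+1}$ by the choice of $h_{k+1}$. The intersection $\mathcal P=\bigcap_kY_k$ is then invariant, preserved, contains $\mathcal K$, and lies in $\cC_i(\eta)\cap\{H>0\}$.

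The pinching property is the main check. Fix $\rho>0$. For $A\in Y_k$, the tensor $A+\frac{h_k}{H}\Id\otimes\bH$ has the same $\widehat A$ and trace norm $H+nh_k$. Lemma~\ref{lem:family} therefore gives
\[
 |\widehat A|\le s_k^{-1}\sqrt{2/n}\,(H+nh_k).
\]
Since $\operatorname{dist}(A,\cF_{i,\eta})\le|\widehat A|$ plus a correction, we need $A-\widehat A\in\cF_{i,\eta}$, meaning this tensor must stay in $\cC_i(\eta)$. This is the delicate point. Because $\Phi$ splits as in \eqref{eq:deformed-splitting}, and because $\Phi_u^{i,\eta}(\widehat A)\le0$ by Lemma~\ref{lem:scalar}, we get
\[
 \Phi_u^{i,\eta}\!\left(\frac{A^{\bH}}{H^2}\otimes\bH\right)=\Phi_u^{i,\eta}(A)-\Phi_u^{i,\eta}(\widehat A)\ge\Phi_u^{i,\eta}(A)\ge0.
\]
So $A-\widehat A\in\cF_{i,\eta}$ after all, and $\operatorname{dist}(A,\cF_{i,\eta})\le|\widehat A|$. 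Now choose $k$ with $s_k^{-1}\sqrt{2/n}\le\rho/2$, using $H\le\sqrt n|A|$. The set where $\operatorname{dist}>\rho|A|$ then forces $\rho|A|<\tfrac{\rho}{2}\sqrt n\,|A|\cdot\sqrt{1/n}\cdots$, and a short rearrangement bounds $|A|$ by a constant times $h_k$. The constants $\sqrt n$ here need care: if they fail to work out as written, enlarge $k$ so that the $H$-term is at most $\rho|A|/2$ and the remainder at most $\sqrt{2n}\,h_k/s_k$. With that, the set is bounded, and $\mathcal P$ is $\cF_{i,\eta}$-pinching.
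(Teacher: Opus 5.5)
Your proof is correct and is essentially the paper's: iterate Lemma~\ref{lem:improvement} with $s_k=(1+\eta/8)^k$, intersect the resulting sets, and combine Lemma~\ref{lem:family} with the observation $(A^{\bH}/H^2)\otimes\bH\in\cF_{i,\eta}$. The one bookkeeping difference is that the paper fixes $h\ge\max_{\mathcal K}H$, takes $h_k=2^kh$, and keeps $\mathcal K$ inside each set via the inclusion $\{A\in Y:H\le h\}\subset Y'$, whereas your adaptive choice $h_{k+1}\ge2h_k$ (by compactness and monotonicity in $h$) works equally well; in the last step the threshold should be $\sqrt2\,s_k^{-1}\le\rho/2$ (since $H\le\sqrt n|A|$), which gives $|A|<2\sqrt{2n}\,h_k/(\rho s_k)$ wherever $\operatorname{dist}(A,\cF_{i,\eta})>\rho|A|$.
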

\begin{proof}
Choose $h>0$ such that $H\le h$ on $\mathcal K$. Put $\mathcal P_0=\cC_i(\eta)\cap\{H>0\}$ and define inductively
\[
 \mathcal P_{j+1}=\left\{A\in\mathcal P_j: A+\frac{2^{j+1}h}{H}\Id\otimes\bH \in\cC_{i,\eta}\bigl((1+\eta/8)^{j+1}\bigr)\right\}
\]
for $j\ge0$. Corollary~\ref{cor:preserved} and Proposition~\ref{prop:strict}, applied to each connected component, show that $\mathcal P_0$ is preserved by mean curvature flow. Moreover, \eqref{eq:shifted-defining-functions} gives
\[
 A+\frac{h}{H}\Id\otimes\bH\in\cC_{i,\eta}(1) \qquad(A\in\mathcal P_0).
\]
For $j\ge1$, we have by definition
\[
 A+\frac{2^j h}{H}\Id\otimes\bH \in\cC_{i,\eta}\bigl((1+\eta/8)^j\bigr) \qquad(A\in\mathcal P_j).
\]
Applying Lemma~\ref{lem:improvement} with $Y=\mathcal P_j$, $s=(1+\eta/8)^j$, and $2^j h$ in place of $h$, we conclude by induction that every $\mathcal P_j$ is invariant under the natural $O(V)\times O(N)$ action and preserved by mean curvature flow. The same lemma gives
\[
 \{A\in\mathcal P_j:H\le2^j h\}\subset\mathcal P_{j+1}.
\]
Since $H\le h$ on $\mathcal K\subset\mathcal P_0$, it follows that $\mathcal K\subset\mathcal P_j$ for every $j\ge0$. Therefore $\mathcal P=\bigcap_{j\ge0}\mathcal P_j$ is invariant under the natural $O(V)\times O(N)$ action, preserved by mean curvature flow, and contains $\mathcal K$.

Fix $\rho>0$ and choose an integer $j\ge1$ such that $8/(1+\eta/8)^{2j}\le\rho^2$. Let $A\in\mathcal P$ with $H\ge2^jnh$. Since $A\in\cC_i(\eta)$ and $\widehat A$ is traceless, \eqref{eq:deformed-splitting} and Lemma~\ref{lem:scalar} give
\[
 \Phi_u^{i,\eta}\left(\frac{A^{\bH}}{H^2}\otimes\bH\right) =\Phi_u^{i,\eta}(A)-\Phi_u^{i,\eta}(\widehat A) \ge\Phi_u^{i,\eta}(A)\ge0
\]
for every unit $u$. It follows that $(A^{\bH}/H^2)\otimes\bH\in\cC_i(\eta)$. This tensor has trace $\bH$ and its component perpendicular to $\bH$ vanishes, so $(A^{\bH}/H^2)\otimes\bH\in\cF_{i,\eta}$. Since $A\in\mathcal P_j$, we have
\[
 A+\frac{2^j h}{H}\Id\otimes\bH \in\cC_{i,\eta}\bigl((1+\eta/8)^j\bigr).
\]
Applying Lemma~\ref{lem:family} to this tensor and using $H^2\le n|A|^2$, we obtain
\[
\begin{aligned}
 \operatorname{dist}(A,\cF_{i,\eta})^2 &\le\left|A-\frac{A^{\bH}}{H^2}\otimes\bH\right|^2 =|\widehat A|^2\\
 &\le\frac{2(H+2^jnh)^2}{n(1+\eta/8)^{2j}} \le\frac{8H^2}{n(1+\eta/8)^{2j}} \le\rho^2|A|^2.
\end{aligned}
\]
Consequently, every $A\in\mathcal P$ with $\operatorname{dist}(A,\cF_{i,\eta})>\rho|A|$ satisfies $H<2^jnh$ and, by \eqref{eq:coarse}, $|A|<\sqrt{2n}\,2^j h$. This verifies the last condition in Definition~\ref{def:pinching}.
\end{proof}

\section{Round extinction under the first pinching condition}

Let $F:M\times[0,T)\to\R^{n+q}$ be the maximal smooth mean curvature flow of a closed connected immersion satisfying $\cC_1$ initially. By \cite[Propositions~3.10--3.11]{Smoczyk}, the maximal existence time $T$ is finite and the curvature is unbounded as $t\uparrow T$. Moreover, Proposition~\ref{prop:strict} gives $H>0$ for $t>0$. Fix $\tau=T/4$. By Corollary~\ref{cor:margin}, there exists $\eta\in(0,1/2]$ such that $A\in\cC_1(\eta)$ for $t\ge\tau$. The set
\[
 \mathcal K=\{A(p,\tau):p\in M\} \subset\cC_1(\eta)\cap\{H>0\}
\]
is compact. Proposition~\ref{prop:pinching-construction} therefore gives an $\cF_{1,\eta}$-pinching set $\mathcal P$ containing $\mathcal K$. Since $\mathcal P$ is preserved by mean curvature flow, $A(p,t)\in\mathcal P$ for all $p\in M$ and $\tau\le t<T$.

\begin{proof}[Proof of Theorem~\ref{thm:mainA}]
Assertions (i) and (ii) follow from Corollary~\ref{cor:preserved} and Proposition~\ref{prop:strict}. We prove (iii).

Choose $t_j\uparrow T$ and $p_j\in M$ with
\[
 \lambda_j=|A|(p_j,t_j) =\max_{M\times[0,t_j]}|A|\longrightarrow\infty.
\]
For the rescaled flows defined by \eqref{eq:rescaling}, we have
\[
 |A_j|\le1\quad\text{on }M\times[-\lambda_j^2t_j,0], \qquad |A_j|(p_j,0)=1.
\]
Since $\lambda_j^2t_j\to\infty$, \cite[Corollary~4.4]{NaffCanonical} and a diagonal subsequence give smooth pointed convergence to a connected complete ancient flow. The estimates in Proposition~\ref{prop:derivatives}, applied with initial time $\tau$, and \eqref{eq:coarse} allow us to apply Proposition~\ref{prop:blowup}. Thus the limit lies in a fixed affine $(n+1)$-plane, with $H_\infty>0$ and $A_\infty\in\cF_{1,\eta}$. By \eqref{eq:shape-square-bound}, its principal curvatures satisfy
\[
 \kappa_i\ge\frac{H_\infty}{n} -\frac{\sqrt{1-\eta}}nH_\infty =\frac{1-\sqrt{1-\eta}}nH_\infty>0.
\]
It follows from \cite[Section~2]{NaffModels} that every time slice is embedded and bounds a convex set. The same curvature bound and \cite[Main Theorem~1.1]{HamiltonPinched} imply that each time slice is compact. The compact limit extends to its first singular time, preserving the same pinching. Applying \cite[Theorem~3.2]{HuiskenSinestrariAncient} after translating that singular time to zero shows that the limit is a family of shrinking spheres. Since $|A_\infty|(p_\infty,0)=1$, the sphere at time zero has radius $\sqrt n$.

For large $j$, the embeddings defining pointed convergence are defined on all of $S^n$. Their images are open and closed in the connected manifold $M$, so they are diffeomorphisms $\phi_j:S^n\to M$. We may choose $y_j\in\R^{n+q}$ and a linear isometric embedding $\iota:\R^{n+1}\to\R^{n+q}$ such that
\[
 \lambda_j\bigl(F(\phi_j(\omega),t_j+\lambda_j^{-2}s)-y_j\bigr) \longrightarrow\sqrt{n-2ns}\,\iota(\omega) \qquad(s\le0),
\]
smoothly on compact time intervals.

Let $r_j=\max_M|F(\cdot,t_j)-y_j|$. Then $\lambda_jr_j\to\sqrt n$, and the maximum principle \cite[proof of Proposition~3.10]{Smoczyk} gives
\[
 \max_M|F(\cdot,t)-y_j|^2\le r_j^2-2n(t-t_j) \qquad(t_j\le t<T).
\]
For $k\ge j$, both closed balls $\overline B(y_j,r_j)$ and $\overline B(y_k,r_k)$ contain $F(M,t_k)$, so $|y_j-y_k|\le r_j+r_k$. Since $r_j\to0$, there is a point $x_*$ such that
\[
 y_j\to x_*,\qquad \max_M|F(\cdot,t)-x_*|\longrightarrow0\quad\text{as }t\uparrow T.
\]
Letting $t\uparrow T$ in the preceding inequality gives
\[
 \lambda_j|y_j-x_*|\le\lambda_jr_j,\qquad 0\le\lambda_j^2(T-t_j)\le\frac{(\lambda_jr_j)^2}{2n}.
\]
Passing to a subsequence, we may assume that $\lambda_j(y_j-x_*)\to z$ and $\lambda_j^2(T-t_j)\to a\in[0,1/2]$. Consequently,
\[
 \lambda_j\bigl(F(M,T+\lambda_j^{-2}s)-x_*\bigr) \longrightarrow z+\sqrt{n-2n(s+a)}\,\iota(S^n) \qquad(-2\le s\le-1),
\]
smoothly with multiplicity one. At each of these times, the Gaussian integrals centered at $(0,0)$ converge to the Gaussian density of $F$ at $(x_*,T)$. Thus the Gaussian integral of the limiting family is constant. The equality case of Huisken's monotonicity formula \cite[Proposition~3.16]{Smoczyk} gives $z=0$ and $a=1/2$. We conclude that
\begin{equation}\label{eq:spherical-sequence}
 \lambda_j\bigl(F(M,T+\lambda_j^{-2}s)-x_*\bigr) \longrightarrow\sqrt{-2ns}\,\iota(S^n) \qquad(-2\le s\le-1).
\end{equation}

The measures induced by $F$, counted with multiplicity, form an integral Brakke flow by \cite[Remark~1.2 and Definition~A.1]{LSS}. The convergence in \eqref{eq:spherical-sequence} verifies the measure-closeness assumption of \cite[Theorem~1.1]{Schulze}, while its density assumption holds with equality by Huisken's monotonicity formula. Applying that theorem to one sufficiently large fixed rescaling gives smooth convergence of the normalized images, as normal graphs of unit density, to a self-shrinker. By \eqref{eq:spherical-sequence}, the self-shrinker is $\sqrt{2n}\,\iota(S^n)$. For $t$ sufficiently close to $T$, the immersion $F(\cdot,t)$ is a proper local diffeomorphism onto its image and hence a covering. Since the density is one, this covering is one-sheeted. Composing its inverse with the normal graph parametrization gives smooth convergence after reparametrization.
\end{proof}

\section{Canonical neighborhoods under the second pinching condition}\label{sec:canonical}

Let $F:M\times[0,T)\to\R^{n+q}$ be the maximal smooth mean curvature flow of a closed connected immersion satisfying $\cC_2$ initially. As in Section~6, the maximal existence time $T$ is finite, the curvature is unbounded as $t\uparrow T$, and $H>0$ for $t>0$. Fix $\tau=T/4$. By Corollary~\ref{cor:margin}, there exists $\eta\in(0,1/2]$ such that $A\in\cC_2(\eta)$ for $t\ge\tau$. The set
\[
 \mathcal K=\{A(p,\tau):p\in M\} \subset\cC_2(\eta)\cap\{H>0\}
\]
is compact. Proposition~\ref{prop:pinching-construction} gives an $\cF_{2,\eta}$-pinching set $\mathcal P$ containing $\mathcal K$. Since $\mathcal P$ is preserved by mean curvature flow, $A(p,t)\in\mathcal P$ for all $p\in M$ and $\tau\le t<T$. Throughout this section, we apply Proposition~\ref{prop:derivatives} and Corollary~\ref{cor:local} with initial time $\tau$.

\begin{lemma}\label{lem:cylindrical-limit}
Along the original flow,
\[
 \limsup_{H\to\infty}\frac{|A|^2}{H^2}\le\frac1{n-1}.
\]
\end{lemma}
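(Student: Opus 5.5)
We argue by contradiction, combining the codimension reduction with the hypersurface rigidity in Lemma~\ref{lem:ratio}. Suppose there are $\delta>0$ and points $(p_j,t_j)$ with $t_j\ge\tau$, $\lambda_j:=H(p_j,t_j)\to\infty$ and $|A|^2\ge(\frac1{n-1}+\delta)H^2$ at $(p_j,t_j)$.

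\emph{Step 1: limit flow.} Rescale by \eqref{eq:rescaling} with these $\lambda_j$. By Corollary~\ref{cor:local}, on parabolic neighborhoods of size comparable to $\lambda_j^{-1}$ we have $\tfrac12\lambda_j\le H\le2\lambda_j$ and $|\nabla^kA|\le C_k\lambda_j^{k+1}$. For the rescaled flows this gives bounded curvature and derivatives on fixed local neighborhoods, so a subsequence converges to a local limit flow on a connected domain $\Omega\times(-\theta/2,0]$. On this limit $H_\infty(p_\infty,0)=1$, and $|A_\infty|^2\ge(\frac1{n-1}+\delta)H_\infty^2$ at $(p_\infty,0)$. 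A purely local limit suffices here, and it avoids any completeness issue. The argument of Proposition~\ref{prop:blowup} (pinching-set distance estimate plus scaling, with $H_\infty\ge1/2$ on the domain) then gives $A_\infty\in\cF_{2,\eta}$ throughout. Lemma~\ref{lem:plane}, applied on the connected domain, puts the limit in a fixed affine $(n+1)$-plane. So we obtain a local hypersurface flow with $H>0$ and $A_\infty\in\cC_2(\eta)$, hence $A_\infty\in\cC_2$.

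\emph{Step 2: algebraic consequence of $\cC_2$ for hypersurfaces.} For a hypersurface tensor (one normal direction) with principal curvatures $\kappa_i$ in $\cC_2$, we need the algebraic bound $|A|^2\le\beta H^2$ for some $\beta<1$. Step 3 also needs the sharper fact that $\cC_2$ forces $\kappa_i\ge0$, with the extremal ratio $|A|^2/H^2=1/(n-1)$ attained only by the $S^{n-1}\times\R$ tensor. Evaluating $\Phi^{2,0}_{e_k\wedge e_l}$ gives
\[
(n-4)(\mu_k^2+\mu_l^2)+2(\mu_k+\mu_l)^2\le\frac{2(n+1)}{(n-1)(n-2)}H^2-\frac{2(2n-1)}{n-1}|A^\circ|^2,
\]
where $\mu_k=\kappa_k-H/n$. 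Choosing $(k,l)$ to contain the most negative $\kappa$ should force $\kappa_{\min}\ge0$, i.e.\ two-convexity-type control; summing over pairs gives the scalar bound via Lemma~\ref{lem:scalar}, which yields $\beta=2/n$. \textbf{This step is the main obstacle}: it is a finite-dimensional eigenvalue optimization, and $\cC_2$ is designed so that $S^{n-2}\times\R^2$ lies on its boundary, with $|A|^2/H^2=1/(n-2)>1/(n-1)$. So $\cC_2$ alone does not give $1/(n-1)$, and a dynamic argument is needed.

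\emph{Step 3: dynamic improvement.} Take $\beta_*=\limsup_{H\to\infty}|A|^2/H^2$ and suppose $\beta_*>1/(n-1)$. The Step~1 limit, taken along a sequence realizing $\beta_*$, satisfies $|A_\infty|^2\le\beta_*H_\infty^2$ everywhere, with equality at $(p_\infty,0)$. Here the uniformity comes from the limsup, because $H\to\infty$ uniformly on the rescaled neighborhoods. Since $\beta_*\le2/n<1$, Lemma~\ref{lem:ratio} gives that the limit has $\nabla A=0$ and that all its nonzero principal curvatures are equal and positive. Say there are $m$ of them, so $|A|^2/H^2=1/m=\beta_*$; by assumption $m\le n-2$. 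But a tensor with $m\le n-2$ equal positive curvatures and the rest zero violates $\cC_2(\eta)$ with $\eta>0$. The borderline case $m=n-2$ lies on $\partial\cC_2$, since $S^{n-2}\times\R^2$ is on the boundary, and hence not in $\cC_2(\eta)$. The case $m<n-2$ is worse, which can be checked directly with $u=e_k\wedge e_l$ chosen among the zero directions. This contradiction gives $\beta_*\le1/(n-1)$. Since Step~3 only needs the $\cC_2(\eta)$ margin and Lemma~\ref{lem:ratio}, it may make the hard algebra in Step~2 largely unnecessary beyond the bound $\beta_*\le 2/n$ from \eqref{eq:coarse}.
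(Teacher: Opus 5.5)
Your Steps 1 and 3 are correct and follow the paper's proof essentially step for step. You realize the $\limsup$ along a sequence and take a local blow-up limit, which lies in $\cF_{2,\eta}$ and is therefore a hypersurface flow attaining $|A|^2/H^2=\beta$ at an interior point. You then apply Lemma~\ref{lem:ratio} and exclude two vanishing principal curvatures by evaluating $\Phi^{2,\eta}_{e_1\wedge e_2}$ on zero eigendirections, which gives $\Phi^{2,\eta}_{e_1\wedge e_2}(A_\infty)\le-\frac{2(n+1)\eta}{(n-1)(n-2)}H_\infty^2<0$ for every $m\le n-2$ at once.

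Step 2 can simply be deleted; its claim that the ratio $1/(n-1)$ is extremal in $\cC_2$ is false, as you yourself note via $S^{n-2}\times\R^2$. You should also record two small points: $H$ is unbounded by \eqref{eq:coarse}, and $t_j\to T$, so the rescaled neighborhoods lie in $t\ge\tau$, where the pinching set and Corollary~\ref{cor:local} apply.
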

\begin{proof}
Since $|A|$ is unbounded, \eqref{eq:coarse} implies that $H$ is unbounded. Let $\beta=\limsup_{H\to\infty}|A|^2/H^2\in[1/n,2/n]$, and choose $(p_j,t_j)$ such that
\[
 Q_j=H(p_j,t_j)\longrightarrow\infty,\qquad \frac{|A|^2}{H^2}(p_j,t_j)\longrightarrow\beta.
\]
The curvature is bounded on each compact subinterval of $[0,T)$, so $t_j\to T$. For the rescaled flows \eqref{eq:rescaling} with $\lambda_j=Q_j$, Corollary~\ref{cor:local} and \eqref{eq:coarse} give
\[
 \tfrac12\le H_j\le2,\qquad |A_j|^2\le\frac8n \quad\text{on }B_{g_j(0)}(p_j,r)\times[-\theta,0],
\]
where $r,\theta>0$ are independent of $j$. After decreasing $r$ and $\theta$, we obtain a smooth pointed limit near $(p_\infty,0)$ by \cite[Corollary~4.4]{NaffCanonical}. Proposition~\ref{prop:blowup} implies that the limit is a hypersurface flow with $H_\infty>0$ and $A_\infty\in\cF_{2,\eta}$. Since $H\ge Q_j/2\to\infty$ on the corresponding neighborhoods of the original flow, the definition of $\beta$ gives
\[
 \frac{|A_\infty|^2}{H_\infty^2}\le\beta,\qquad \frac{|A_\infty|^2}{H_\infty^2}(p_\infty,0)=\beta.
\]
By Lemma~\ref{lem:ratio}, the nonzero principal curvatures are equal and positive. Suppose that two principal curvatures vanish, and choose corresponding orthonormal eigenvectors $e_1,e_2$. Then $|A_\infty^\circ|^2\ge2H_\infty^2/[n(n-2)]$, so \eqref{eq:phi2} gives
\[
\begin{aligned}
 0\le\frac{\Phi_{e_1\wedge e_2}^{2,\eta}(A_\infty)}{H_\infty^2} &\le\frac{2(n+1)(1-\eta)}{(n-1)(n-2)} -\frac{4(2n-1)}{n(n-1)(n-2)}-\frac2n\\
 &=-\frac{2(n+1)\eta}{(n-1)(n-2)}<0,
\end{aligned}
\]
a contradiction. Hence at least $n-1$ principal curvatures are nonzero, and $\beta\le1/(n-1)$.
\end{proof}

By Proposition~\ref{prop:blowup} and Lemma~\ref{lem:cylindrical-limit}, every connected nonflat smooth limit of rescalings with scales tending to infinity lies in a fixed affine $(n+1)$-plane and satisfies
\begin{equation}\label{eq:cylindrical-limit}
 H_\infty>0,\qquad |A_\infty|^2\le\frac{H_\infty^2}{n-1}.
\end{equation}
Let $\lambda_1\le\cdots\le\lambda_n$ denote its principal curvatures in the mean-curvature direction. Cauchy--Schwarz gives
\[
\begin{aligned}
 (H_\infty-\lambda_1)^2 &\le(n-1)(|A_\infty|^2-\lambda_1^2) \le H_\infty^2-(n-1)\lambda_1^2,\\
 \lambda_1+\lambda_2 &\ge H_\infty-\sqrt{(n-2)|A_\infty|^2} \ge\left(1-\sqrt{\frac{n-2}{n-1}}\right)H_\infty.
\end{aligned}
\]
The first inequality gives $n\lambda_1^2\le2H_\infty\lambda_1$ and hence $\lambda_1\ge0$. Consequently, the limit is weakly convex and uniformly two-convex. The derivative estimates \eqref{eq:derivatives} hold with the same constants $C_1,C_2$. If the limit is complete and ancient, it is noncollapsed by \cite[Theorem~1.1]{NaffModels}. In the strictly convex case, \cite[Theorem~5.3]{NaffModels} gives a noncollapsing constant depending only on $n,C_1,C_2$; otherwise, the reduction in \cite[Section~2]{NaffModels} shows that it is a shrinking round cylinder. Thus all these limits are $\alpha$-noncollapsed for one $\alpha=\alpha(n,C_1,C_2)>0$.

\begin{definition}\label{def:comparison}
An \emph{ancient model solution} is a complete, connected, nonflat ancient mean curvature flow contained in an affine $(n+1)$-plane in $\R^{n+q}$, uniformly two-convex and noncollapsed within that plane, as in \cite[Definition~2.2]{NaffCanonical}. For $H(p,t)>0$ and $r,\theta>0$, set
\[
 \widehat P(p,t,r,\theta)=B_{g(t)}(p,H(p,t)^{-1}r)\times[t-\theta H(p,t)^{-2},t].
\]
The flow is $\varepsilon_0$-close to an ancient model solution on $\widehat P(p_0,t_0,K_0,K_0)$ if its rescaling \eqref{eq:rescaling}, with scale $Q_0=H(p_0,t_0)$ and basepoint $(p_0,t_0)$, satisfies \cite[Definition~2.5]{NaffCanonical}.
\end{definition}

\begin{proof}[Proof of Theorem~\ref{thm:mainB}]
Assertions (i) and (ii) follow from Corollary~\ref{cor:preserved} and Proposition~\ref{prop:strict}. We prove (iii).

We prove the assertion with $\alpha$-noncollapsed ancient model solutions, where $\alpha$ is the constant obtained above. Choose $\widetilde\varepsilon>0$ sufficiently small and $\widetilde K>0$ sufficiently large as in \cite[Proposition~2.3, Remark~2.4, and Section~3]{NaffCanonical}, and fix $\varepsilon_0\in(0,\widetilde\varepsilon)$ and $K_0>\widetilde K$.

Suppose this assertion is not true. By the point-picking argument in \cite[Section~3]{NaffCanonical}, we may choose $(p_j,t_j)$ with $Q_j=H(p_j,t_j)\to\infty$ such that the flow is not $\varepsilon_0$-close on $\widehat P(p_j,t_j,K_0,K_0)$ to an $\alpha$-noncollapsed ancient model solution, but has such a neighborhood whenever $t\le t_j$ and $H(x,t)\ge4Q_j$. For the rescaled flows \eqref{eq:rescaling} with $\lambda_j=Q_j$, we have $t_j\to T$ and $Q_j^2(t_j-\tau)\to\infty$.

Fix $\rho>0$. We claim that
\[
 \limsup_{j\to\infty}\ \sup_{x\in B_{g(t_j)}(p_j,\rho/Q_j)} \frac{H(x,t_j)}{Q_j}<\infty.
\]
Suppose otherwise. We follow the proof of \cite[Proposition~3.5]{NaffCanonical}, taking a local limit of the rescaled flows and then rescaling about points where its curvature becomes unbounded. By passing to a diagonal subsequence, we obtain the same limits from rescalings of the original flow with scales tending to infinity. Proposition~\ref{prop:derivatives} and Corollary~\ref{cor:local} give the derivative and local curvature bounds required in that proof. At points with $H\ge4Q_j$, the choice of $(p_j,t_j)$ and \cite[Proposition~2.3]{NaffCanonical} give a neck, a cap, or a closed neighborhood diffeomorphic to $S^n$. Moreover, Corollary~\ref{cor:local} and \cite[Corollary~4.4]{NaffCanonical} give a smooth backward extension of each nonflat time-zero limit on every relatively compact subset, for a time interval depending on that subset. Such a limit is a weakly convex hypersurface flow by \eqref{eq:cylindrical-limit}, and the Gauss equation gives
\[
 \operatorname{Ric}\ge0,\qquad \frac{n-2}{n-1}H_\infty^2 \le\operatorname{scal}=H_\infty^2-|A_\infty|^2 \le H_\infty^2.
\]
The area comparison and Step~5 in the proof of Proposition~3.5 give rescaled local flows with uniformly bounded curvature on a common backward time interval. Their limit is a nonflat hypersurface flow whose time-zero slice is a piece of a metric cone. The strong maximum principle excludes this limit as in that proof. This proves the claim.

By the proofs of \cite[Lemmas~3.6--3.7]{NaffCanonical}, a subsequence converges smoothly to a complete hypersurface flow on a maximal backward time interval $(t_0,0]$, with bounded curvature on compact time subintervals. Suppose this limit flow is not ancient, so $t_0>-\infty$. The limit is weakly convex and satisfies $|A_\infty|/H_\infty\le1/\sqrt{n-1}<1$. Hence the distance estimate and Harnack inequality in \cite[Lemma~2.6 and Theorem~2.7]{NaffCanonical} apply. As in the proof of \cite[Lemma~3.8]{NaffCanonical}, pass to a smooth pointed limit of time slices with times tending to $t_0$. This limit can be obtained along a diagonal sequence of the original rescaled flows. Using the local backward extensions above, the same argument bounds $\sup_{M_\infty}H_\infty(\cdot,t)$ as $t\downarrow t_0$, contradicting the curvature blow-up in the maximal backward construction. The limit is therefore ancient.

The limit is an $\alpha$-noncollapsed ancient model solution, since $H_\infty(p_\infty,0)=1$ and it inherits \eqref{eq:derivatives}. Smooth pointed convergence on $B_{g_\infty(0)}(p_\infty,K_0+1)\times[-K_0-1,0]$ gives the required $\varepsilon_0$-closeness on $\widehat P(p_j,t_j,K_0,K_0)$ for large $j$. This contradicts the choice of $(p_j,t_j)$.
\end{proof}

\appendix
\section{The examples}\label{app:examples}

Fix $k\ge1$ and $a_0,b_0>0$, and let $T=a_0^2/(2k)+b_0^2/[2(k+1)]$. For $0\le t<T$, define $a(t)>0$ and $b(t)$ by
\[
 \frac{a(t)^2}{2k}+\frac{b_0^2}{2(k+1)}\left(\frac{a(t)}{a_0}\right)^{4(k+1)/k}=T-t,\qquad b(t)=b_0\left(\frac{a(t)}{a_0}\right)^{2(k+1)/k}.
\]
The first equation has a unique positive solution, since its left-hand side is strictly increasing in $a(t)$. In $\R^{k+1}\oplus\Sym_0(\R^{k+1})\cong\R^N$, the immersions $F:S^k\times[0,T)\to\R^N$ given by
\[
 F(x,t)=\left(a(t)x,b(t)\left(xx^T-\frac{\Id}{k+1}\right)\right),\qquad x\in S^k
\]
solve mean curvature flow with initial values $a(0)=a_0$ and $b(0)=b_0$. Let $\rho(t)=\sqrt{a(t)^2+2b(t)^2}$. In the orthonormal frame $dF(e_i)/\rho(t)$, where $e_1,\ldots,e_k$ is an orthonormal basis of $T_xS^k$, we have
\[
\begin{aligned}
 A_{ij}&=\frac1{\rho(t)^2}\left(-a(t)\delta_{ij}x,b(t)(e_ie_j^T+e_je_i^T-2\delta_{ij}xx^T)\right),\\
 A^\circ_{ij}&=\frac{b(t)}{\rho(t)^2}\left(0,e_ie_j^T+e_je_i^T-\frac{2\delta_{ij}}k(\Id-xx^T)\right).
\end{aligned}
\]

\noindent\textit{Example~\ref{ex:intro-sphere}.} The parametrization in the introduction is $F(\cdot,0)$ with $k=n$ and $(a_0,b_0)=(2\sqrt n,1)/(2n+1)$. At $t=0$, the formulas above give
\[
 |A|^2=\frac{3n(n+1)}2,\qquad H^2=n(n^2+n+1),\qquad \sum_\alpha(A_\alpha^\circ)^2=\frac{|A^\circ|^2}{n}\Id=\frac{(n-1)(n+2)}{2n}\Id.
\]
Thus $|A|^2/H^2=3(n+1)/[2(n^2+n+1)]>4/(3n)>1/(n-1)$ for $n\ge10$, whereas
\[
 \frac{H^2}{n-1}\Id-|A^\circ|^2\Id-n\sum_\alpha(A_\alpha^\circ)^2=\frac{n^2+4n-2}{n-1}\Id>0.
\]
Since $b(t)/a(t)\to0$ and $a(t)^2/(T-t)\to2n$, we have $(T-t)^{-1/2}F(\cdot,t)\to(\sqrt{2n}\,x,0)$ smoothly, with multiplicity one.

\noindent\textit{Example~\ref{ex:intro-tube}.} Take $k=9$, $a_0=\sqrt{35}/6$, and $b_0=1/(6\sqrt2)$. For $(x,z)\in S^9\times S^1$, the flow is given by
\[
 F(x,z,t)=\left(a(t)x,b(t)\left(xx^T-\frac{\Id}{10}\right),\sqrt{4-2t}\,z\right).
\]
At $t=0$, $\rho(0)=1$. Complete the frame by a unit tangent to the circle. For $i,j\le9$, $A_{ij}$ is given by the formula for $A_{ij}$ above with zero last component, while $A_{10,10}=(0,0,-z/2)$ and $A_{i,10}=0$. Its traceless part is
\[
\begin{aligned}
 A^\circ_{ij}&=\left(-\frac{a_0\delta_{ij}}{10}x,b_0\left(e_ie_j^T+e_je_i^T-\frac{\delta_{ij}}5\Id\right),\frac{\delta_{ij}}{20}z\right),\\
 A^\circ_{10,10}&=\left(\frac{9a_0}{10}x,2b_0\left(xx^T-\frac{\Id}{10}\right),-\frac9{20}z\right),\qquad A^\circ_{i,10}=0.
\end{aligned}
\]
These give $|A|^2=12$, $H^2=84$, and $|A^\circ|^2=18/5$, so $|A|^2/H^2=1/7>4/30>1/8$. On $\Lambda^2T(S^9\times S^1)$, we have
\[
 \frac{11H^2}{36}\Id-\frac{38}{9}|A^\circ|^2\Id-6\sum_\alpha((A_\alpha^\circ)^2)^{[2]}-2\sum_\alpha((A_\alpha^\circ)^{[2]})^2=6\Id_{\Lambda^2TS^9}\oplus\frac13\Id_{TS^9\wedge TS^1}>0.
\]
Hence the cone inequality is strict. Here $T=709/12960<2$, so the limiting circle has positive radius. Since $b(t)/a(t)\to0$ and $a(t)^2/(T-t)\to18$, parabolic rescalings at each point of this circle converge smoothly locally to the shrinking round cylinder $S^9\times\R$, with multiplicity one.

\renewcommand{\bibliofont}{\footnotesize\setlength{\baselineskip}{10.25pt}}

\end{document}